\theoremstyle{plain}
\newtheorem{thm}{Theorem}[section]
\newtheorem{lem}[thm]{Lemma}
\newtheorem{prop}[thm]{Proposition}
\newtheorem{cor}[thm]{Corollary}
\newtheorem{hyp}[thm]{Hypotheses}
\newtheorem*{thm*}{Theorem}
\newtheorem*{lem*}{Lemma}
\newtheorem*{prop*}{Proposition}
\newtheorem*{cor*}{Corollary}
\theoremstyle{definition}
\newtheorem{defn}[thm]{Definition}
\newtheorem*{defn*}{Definition}
\newtheorem{ex}[thm]{Example}
{}
\newtheorem{rem}[thm]{Remark}
\newtheorem*{rem*}{Remark}
\newtheorem{hyp_plain}[thm]{Hypotheses}
\newtheorem{notation}[thm]{Notation}{}
\newtheorem{convention}[thm]{Convention}{}
{}
\newtheorem*{ack}{Acknowledgements}{}
\theoremstyle{remark}
{}
{}
{}
\def\cie{\subseteq}
\def\notcie{\nsubseteq}
\def\iso{\cong}
\def\Un{\bigcup}
\def\un{\cup}
\def\intersec{\cap}
\def\to{\longrightarrow}
\def\rimp{\Rightarrow}
\def\nat{\mathbb{N}}
\def\int{\mathbb{Z}}
\def\str{\mathcal{O}}
\def\a{\alpha}
\def\e{\epsilon}
\def\l{\lambda}
\def\r{\rho}
\def\s{\sigma}
\def\S{\Sigma}
\def\t{\tau}
\def\module{\mathrm{mod}}
\def\Module{\mathrm{Mod}}
\DeclareMathOperator{\End}{End}
\DeclareMathOperator{\Spec}{Spec}
\DeclareMathOperator{\Sing}{Sing}
\DeclareMathOperator{\Spc}{Spc}
\DeclareMathOperator{\supp}{supp}
\DeclareMathOperator{\id}{id}
\DeclareMathOperator{\im}{im}
\DeclareMathOperator{\Hom}{Hom}
\DeclareMathOperator{\hocolim}{hocolim}
\DeclareMathOperator{\QCoh}{QCoh}
\DeclareMathOperator{\Vis}{Vis}
\title{Support theory via actions of tensor triangulated categories}
\author{Greg Stevenson}
\address{Universit\"at Bielefeld, Fakult\"at f\"ur Mathematik, BIREP Gruppe, Postfach 10\,01\,31, 33501 Bielefeld, Germany.}
\email{gstevens@math.uni-bielefeld.de}
\begin{document}

\subjclass[2000]{ 
18E30, 
(14F05, 55U35)}

\keywords{Triangulated category, localizing subcategory, triangular geometry, telescope conjecture}

\begin{abstract}
\noindent We give a definition of the action of a tensor triangulated category $\mathcal{T}$ on a triangulated category $\mathcal{K}$. In the case that $\mathcal{T}$ is rigidly-compactly generated and $\mathcal{K}$ is compactly generated we show this gives rise to a notion of supports which categorifies work of Benson, Iyengar, and Krause and extends work of Balmer and Favi. We prove that a suitable version of the local-to-global principle holds very generally. A relative version of the telescope conjecture is formulated and we give a sufficient condition for it to hold. 
\end{abstract}

\maketitle

\tableofcontents

\section{Introduction}
Triangulated categories, introduced by Verdier \cite{VerdierThesis} and by Dold and Puppe \cite{DoldPuppe} (but without Verdier's octahedral axiom), permeate modern mathematics. Their utility has been demonstrated in algebraic geometry, motivic theory, homotopy theory, modular representation theory, and noncommutative geometry: the theory of Grothendieck duality (\cite{HartshorneRD}, \cite{IKTAC}, \cite{MurfetThesis}, \cite{NeeGrot}, \cite{NeeFlat}), Voevodsky's motivic category (\cite{LMC}, \cite{AyoubT}), Devinatz, Hopkins, and Smith's work on tensor nilpotence \cite{DevinatzHopkinsSmith}, support varieties and the extension of complexity to infinitely generated representations (\cite{CDW}, \cite{BCRInf1}, \cite{BCRInf2}), and recent work on the Baum-Connes conjecture \cite{DellAmbrogioKK} respectively are striking examples of the applications of triangulated categories in these areas.

In each of these areas one often has the good fortune to have more than just a triangulated category. Indeed, usually the triangulated categories arising are naturally \emph{tensor triangulated categories}: we say $(\mathcal{T},\otimes,\mathbf{1})$ is tensor triangulated if $\mathcal{T}$ is a triangulated category and $(\otimes,\mathbf{1})$ is a symmetric monoidal structure on $\mathcal{T}$ such that $\otimes$ is exact in each variable and preserves any coproducts $\mathcal{T}$ might possess. This is a very rich structure and exploiting the monoidal product leads to many beautiful results such as the work of Neeman \cite{NeeChro} and Thomason \cite{Thomclass} on the classification of thick subcategories of derived categories of perfect complexes in algebraic geometry.

Tensor triangular geometry, developed by Paul Balmer \cite{BaSpec}, \cite{BaSSS}, \cite{BaFilt}, \cite{BaRickard}, 
 associates to any essentially small tensor triangulated category $(\mathcal{T},\otimes,\mathbf{1})$ a topological space $\Spc \mathcal{T}$, the \emph{spectrum} of $\mathcal{T}$.  The spectrum comes with a universal, tensor compatible, support theory which assigns to objects of $\mathcal{T}$ closed subsets of the spectrum. This generalizes the homological support for derived categories of sheaves in algebraic geometry and the support varieties attached to representations in modular representation theory. One obtains from this support theory a classification of $\otimes$-ideals which unifies classifications occurring in algebraic geometry, modular representation theory, and algebraic topology.

Now suppose $(\mathcal{T},\otimes,\mathbf{1})$ is a compactly generated tensor triangulated category and the compact objects form a tensor subcategory. In \cite{BaRickard} Balmer and Favi have used tensor idempotents built from support data on the spectrum $\Spc \mathcal{T}^c$ of the compact objects $\mathcal{T}^c$ to extend Balmer's notion of supports to $\mathcal{T}$. A related construction due to Benson, Iyengar, and Krause \cite{BIK} takes as input an $R$-linear compactly generated  triangulated category $\mathcal{K}$, where $R$ is a (graded) commutative noetherian ring, and assigns supports valued in $\Spec R$ to objects of $\mathcal{K}$. Our aim is to develop relative tensor triangular geometry by allowing a tensor triangulated category $\mathcal{T}$ to act on $\mathcal{K}$ i.e., there is a biexact functor $\mathcal{T}\times\mathcal{K} \to \mathcal{K}$ which is  compatible with the monoidal structure on $\mathcal{T}$ and associative and unital in the appropriate senses. This can be viewed as a categorification of the work of Benson, Iyengar, and Krause; for instance, letting $R$ be a commutative noetherian ring, an action of the unbounded derived category $D(R)$ yields the same support theory as the support construction of \cite{BIK}. Furthermore, one can view it as extending this construction to noetherian separated schemes. By construction it specializes to the theory of Balmer and Favi when a tensor triangulated category acts on itself in the obvious way. Thus the notion of action provides a link between these two theories of supports and we are able to extend many of the important results of both theories to the case of actions.

Let us fix compactly generated triangulated categories $\mathcal{T}$ and $\mathcal{K}$. Furthermore, suppose $\mathcal{T}$ carries a compatible symmetric monoidal structure $(\mathcal{T},\otimes,\mathbf{1})$ so that the compact objects form a rigid tensor triangulated subcategory $(\mathcal{T}^c,\otimes,\mathbf{1})$ whose spectrum $\Spc \mathcal{T}^c$ is a noetherian topological space (these hypotheses are not necessary for all of the results we quote but are chosen for simplicity). We recall that $\mathcal{T}^c$ is rigid if for all $x$ and $y$ in $\mathcal{T}^c$, setting $x^{\vee} = \hom(x,\mathbf{1})$, the natural map
\begin{displaymath}
x^{\vee}\otimes y \to \hom(x,y)
\end{displaymath}
is an isomorphism, where $\hom(-,-)$ denotes the internal hom which is guaranteed to exist in this case by Brown representability.  In Section \ref{sec_actions} we give a definition of a left action $(-)*(-)$ of $\mathcal{T}$ on $\mathcal{K}$. To each specialization closed subset $\mathcal{V}\cie \Spc \mathcal{T}^c$ and each point $x\in \Spc \mathcal{T}^c$ we associate $\otimes$-idempotent objects $\mathit{\mathit{\Gamma}}_\mathcal{V}\mathbf{1}$ and $\mathit{\Gamma}_x\mathbf{1}$ of $\mathcal{T}$ as in \cite{BaRickard}. The object $\mathit{\Gamma}_\mathcal{V}\mathbf{1}$ is the idempotent corresponding to acyclization with respect to the smashing subcategory generated by the compact objects supported in $\mathcal{V}$ and we denote by $L_\mathcal{V}\mathbf{1}$ the idempotent corresponding to localization at this category. Then $\mathit{\Gamma}_x\mathbf{1}$ is defined to be $\mathit{\Gamma}_{\mathcal{V}(x)}\mathbf{1}\otimes L_{\mathcal{Z}(x)}\mathbf{1}$ where 
\begin{displaymath}
\mathcal{V}(x) = \overline{\{x\}} \quad and \quad \mathcal{Z}(x) = \{y\in \Spc \mathcal{T}^c \; \vert \; x\notin \mathcal{V}(y)\}.
\end{displaymath}
 We prove in Lemmas \ref{lem_action_transfer1} and \ref{lem_action_transfer} that each specialization closed subset $\mathcal{V}$ yields a localization sequence
\begin{displaymath}
\xymatrix{
\mathit{\Gamma}_\mathcal{V}\mathcal{K} \ar[r]<0.5ex> \ar@{<-}[r]<-0.5ex> & \mathcal{K} \ar[r]<0.5ex> \ar@{<-}[r]<-0.5ex> & L_\mathcal{V}\mathcal{K} 
}
\end{displaymath}
where $\mathit{\Gamma}_\mathcal{V}\mathcal{K}$
is the essential image of $\mathit{\Gamma}_\mathcal{V}\mathbf{1}*(-)$. Furthermore, $\mathit{\Gamma}_\mathcal{V}\mathcal{K}$ is generated by objects of $\mathcal{K}^c$ by Corollary \ref{cor_transfer_cg}.  The idempotents $\mathit{\Gamma}_x\mathbf{1}$ give rise to supports on $\mathcal{K}$ with values in $\Spc\mathcal{T}^c$: for an object $A$ of $\mathcal{K}$ we set
\begin{displaymath}
\supp A = \{x\in \Spc\mathcal{T}^c \; \vert \; \mathit{\Gamma}_x\mathbf{1}*A \neq 0\}.
\end{displaymath}
When $\mathcal{T}$ is rigidly-compactly generated and $\mathcal{K}$ is compactly generated the subcategories $\mathit{\Gamma}_\mathcal{V}\mathcal{K}$ and $L_\mathcal{V}\mathcal{K}$ consist precisely of those objects whose support is in $\mathcal{V}$ and $\Spc\mathcal{T}^c \setminus \mathcal{V}$ respectively and the associated localization triangles decompose objects into a piece supported in each of these subsets; this last fact is proved in Proposition \ref{prop_abs_supp_prop} together with other desirable properties of the support.

The local-to-global principle, originally introduced in \cite{BIKstrat2} in the context of ring actions on triangulated categories, allows one to reduce classification problems to considering local pieces of a triangulated category. We introduce the following version for actions of triangulated categories:
\begin{defn*}[\ref{defn_ltg}]
We say $\mathcal{T}\times\mathcal{K} \stackrel{*}{\to} \mathcal{K}$ satisfies the \emph{local-to-global principle} if for each $A$ in $\mathcal{K}$
\begin{displaymath}
\langle A \rangle_* = \langle \mathit{\Gamma}_x A \; \vert \; x\in \Spc \mathcal{T}^c\rangle_*
\end{displaymath}
where $\langle A \rangle_*$ and $\langle \mathit{\Gamma}_x A \; \vert \; x\in \Spc \mathcal{T}^c\rangle_*$ are the smallest localizing subcategories of $\mathcal{K}$ containing $A$ or the $\mathit{\Gamma}_x A$ respectively and closed under the action of $\mathcal{T}$.
\end{defn*}
Our main result concerning the local-to-global principle is that, assuming $\mathcal{T}$ is sufficiently nice, it is only a property of $\mathcal{T}$ not of the action and it always holds.
\begin{thm*}[\ref{thm_general_ltg}]
Suppose $\mathcal{T}$ is a rigidly-compactly generated tensor triangulated category arising from a monoidal model category and that $\Spc \mathcal{T}^c$ is noetherian. Then the following statements hold:
\begin{itemize}
\item[$(i)$] The local-to-global principle holds for the action of $\mathcal{T}$ on itself;
\item[$(ii)$] The associated support function detects vanishing of objects i.e., $X \in \mathcal{T}$ is zero if and only if $\supp X = \varnothing$;
\item[$(iii)$] For any chain $\{\mathcal{V}_i\}_{i\in I}$ of specialization closed subsets of $\Spc \mathcal{T}^c$ with union $\mathcal{V}$ there is an isomorphism
\begin{displaymath}
\mathit{\Gamma}_\mathcal{V}\mathbf{1} \iso \hocolim \mathit{\Gamma}_{\mathcal{V}_i}\mathbf{1}
\end{displaymath}
where the structure maps are the canonical ones.
\end{itemize}
Furthermore, the relative versions of (i) and (ii) hold for any action of $\mathcal{T}$ on a compactly generated triangulated category $\mathcal{K}$.
\end{thm*}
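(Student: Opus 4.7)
My plan is to dispatch (iii) first, bootstrap it into (i), deduce (ii) immediately, and then transport everything to $\mathcal{K}$. For (iii), the monoidal model category hypothesis supplies a well-behaved homotopy colimit, and the coherent maps $\mathit{\Gamma}_{\mathcal{V}_i}\mathbf{1} \to \mathit{\Gamma}_{\mathcal{V}}\mathbf{1}$ induce a canonical comparison $\hocolim \mathit{\Gamma}_{\mathcal{V}_i}\mathbf{1} \to \mathit{\Gamma}_\mathcal{V}\mathbf{1}$. I would show it is an isomorphism by verifying the universal property of the acyclization: the homotopy colimit lies in $\mathit{\Gamma}_\mathcal{V}\mathcal{T}$ (each $\mathit{\Gamma}_{\mathcal{V}_i}\mathcal{T}$ does and this is closed under coproducts), while the cofiber $\hocolim L_{\mathcal{V}_i}\mathbf{1}$ is $L_\mathcal{V}$-local. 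The latter is tested against compacts $y\in\mathcal{T}^c$ supported in $\mathcal{V}$: rigidity makes $\supp y$ closed in the noetherian space, hence a finite union of irreducibles $\overline{\{\eta_k\}}$, and each $\eta_k$ lies in some $\mathcal{V}_{i_k}$. Because $\{\mathcal{V}_i\}$ is a chain one picks $i_0\geq\max_k i_k$, whence $\supp y \cie \mathcal{V}_{i_0}$ and $\Hom(y,L_{\mathcal{V}_i}\mathbf{1}) = 0$ for $i\geq i_0$, so $\colim \Hom(y,L_{\mathcal{V}_i}\mathbf{1}) = \Hom(y,\hocolim L_{\mathcal{V}_i}\mathbf{1}) = 0$.

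For (i), I would run noetherian induction on closed subsets $\mathcal{V}$ of $\Spc \mathcal{T}^c$, showing $\mathit{\Gamma}_\mathcal{V}X \in \langle \mathit{\Gamma}_y X\mid y\in\mathcal{V}\rangle_*$ for every $X\in\mathcal{T}$. If $\mathcal{V}$ is reducible as $\mathcal{V}_1\cup\mathcal{V}_2$ with both proper, the Mayer--Vietoris triangle $\mathit{\Gamma}_{\mathcal{V}_1\cap\mathcal{V}_2}X \to \mathit{\Gamma}_{\mathcal{V}_1}X \oplus \mathit{\Gamma}_{\mathcal{V}_2}X \to \mathit{\Gamma}_\mathcal{V}X$ reduces to the inductive hypothesis. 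If $\mathcal{V} = \mathcal{V}(x)$ is irreducible, tensoring the triangle $\mathit{\Gamma}_{\mathcal{Z}(x)}\mathbf{1} \to \mathbf{1} \to L_{\mathcal{Z}(x)}\mathbf{1}$ with $\mathit{\Gamma}_{\mathcal{V}(x)}X$ and using $\mathcal{V}(x)\cap\mathcal{Z}(x) = \mathcal{V}(x)\setminus\{x\}$ yields the triangle $\mathit{\Gamma}_{\mathcal{V}(x)\setminus\{x\}}X \to \mathit{\Gamma}_{\mathcal{V}(x)}X \to \mathit{\Gamma}_x X$, so it suffices to control the first term. Well-ordering the proper specializations $\{y_\alpha\}_{\alpha<\kappa}$ of $x$ and setting $\mathcal{W}_\alpha = \bigcup_{\beta\leq\alpha}\mathcal{V}(y_\beta)$ gives a chain of specialization closed subsets with union $\mathcal{V}(x)\setminus\{x\}$. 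Transfinite induction on $\alpha$---with successor steps handled by Mayer--Vietoris (using that $\mathcal{V}(y_{\alpha+1})$ is a proper closed subset of $\mathcal{V}(x)$, so the outer IH applies) and limit steps by (iii)---finishes this. One final application of (iii) extends the claim from closed to arbitrary specialization closed subsets, and taking $\mathcal{V} = \Spc \mathcal{T}^c$ with $\mathit{\Gamma}_{\Spc \mathcal{T}^c}\mathbf{1} \iso \mathbf{1}$ closes out (i).

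Statement (ii) is then immediate: vanishing of $\supp X$ forces every $\mathit{\Gamma}_x X = 0$, so $\langle X\rangle_* = 0$ by (i), whence $X = 0$. For the relative versions, $\mathit{\Gamma}_\mathcal{V}A = \mathit{\Gamma}_\mathcal{V}\mathbf{1} * A$ and the biexactness of the action together with its coproduct preservation make $*$ commute with homotopy colimits, so the chain description from (iii) transports to $\mathit{\Gamma}_\mathcal{V}A \iso \hocolim \mathit{\Gamma}_{\mathcal{V}_i}A$; the same noetherian induction then yields the relative (i) on $\mathcal{K}$, and the relative (ii) for $A$ follows. The main obstacle is the irreducible step in the inductive proof of (i): controlling $\mathit{\Gamma}_{\mathcal{V}(x)\setminus\{x\}}X$ via transfinite induction is what forces (iii) to be proved for chains of specialization closed (and not merely closed) subsets, since it is precisely the limit ordinals that fall outside the class of closed subsets.
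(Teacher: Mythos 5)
Your proposal takes a genuinely different route than the paper. For (iii) the paper also completes the comparison map to a triangle, but it shows the cofiber $Z$ vanishes by a cleverer dodge: it reduces $\Hom(k,Z)=0$ to $\Hom(k,\mathit{\Gamma}_{\mathcal{V}_j}Z)=0$ via adjunction and then shows $\mathit{\Gamma}_{\mathcal{V}_j}Z\iso 0$ directly by tensoring the triangle with $\mathit{\Gamma}_{\mathcal{V}_j}\mathbf{1}$ and using that $\otimes$ commutes with hocolim. Your version tests the cofiber against compacts and uses $\Hom(y,\hocolim L_{\mathcal{V}_i}\mathbf{1}) \iso \colim\Hom(y,L_{\mathcal{V}_i}\mathbf{1})$; this is fine for sequential diagrams but is a nontrivial assertion for chains of uncountable cofinality, so the paper's route is more robust. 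For (i) the paper does not run a noetherian/transfinite double induction at all: it proves a lemma (\ref{lem_Amnon2.10_uber}) by Zorn's lemma applied to the poset of specialization closed subsets $\mathcal{W}$ for which $\mathit{\Gamma}_\mathcal{W}A$ lies in the target submodule. Noetherianity enters only to produce a point $z$ maximal under specialization in the complement of a maximal $Y$, and then the single triangle $\mathit{\Gamma}_Y A\to\mathit{\Gamma}_{Y\un\{z\}}A\to\mathit{\Gamma}_z A$ yields the contradiction. The relative version of (i) is then obtained formally from the case of $\mathcal{T}$ acting on itself via Lemma \ref{lem_loc_commutes2}, rather than by re-running the induction in $\mathcal{K}$.

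There is a concrete gap in your sketch of the irreducible step. Your outer inductive hypothesis is stated for closed subsets $\mathcal{V}'\subsetneq\mathcal{V}(x)$, but in the successor step of the transfinite induction the Mayer--Vietoris triangle for $\mathcal{W}_{\alpha+1}=\mathcal{W}_\alpha\un\mathcal{V}(y_{\alpha+1})$ requires control over the third vertex $\mathit{\Gamma}_{\mathcal{W}_\alpha\intersec\mathcal{V}(y_{\alpha+1})}X$. The set $\mathcal{W}_\alpha\intersec\mathcal{V}(y_{\alpha+1})$ is specialization closed in the proper closed subset $\mathcal{V}(y_{\alpha+1})$, but it is generally not itself closed, so the outer IH as you have stated it does not apply to it. The fix is to strengthen the statement being proved by noetherian induction to ``for every specialization closed $\mathcal{W}\cie\mathcal{V}$ one has $\mathit{\Gamma}_\mathcal{W}X\in\langle\mathit{\Gamma}_y X\mid y\in\mathcal{W}\rangle_*$,'' which then closes the loop; but as written the argument is not complete. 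A related caveat: your transport of (iii) to $\mathcal{K}$ claims biexactness and coproduct preservation alone make $*$ commute with arbitrary directed hocolims. That is not automatic unless the hocolim is expressed as a (transfinite) composite of cofibers of coproducts; the cleaner move, which both avoids the issue and is closer to the paper, is to observe that $\{X\in\mathcal{T}\mid X*A\in\mathcal{L}\}$ is localizing by Lemma \ref{lem_general_closure}, so it suffices that $\mathit{\Gamma}_\mathcal{V}\mathbf{1}$ lie in the localizing subcategory of $\mathcal{T}$ generated by the $\mathit{\Gamma}_{\mathcal{V}_i}\mathbf{1}$, which is exactly what (iii) on $\mathcal{T}$ delivers.
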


We also explore a relative version of the telescope conjecture. The telescope conjecture states that if $\mathcal{L}$ is a localizing subcategory of a compactly generated triangulated category $\mathcal{T}$ such that the inclusion of $\mathcal{L}$ admits a coproduct preserving right adjoint i.e., $\mathcal{L}$ is smashing, then $\mathcal{L}$ is generated by compact objects of $\mathcal{T}$. This is a general version of the conjecture originally made for the stable homotopy category of spectra by Bousfield \cite{BousfieldLSH} and Ravenel \cite{RavenelLoc}. It is still open for the stable homotopy category, it is known to be true for certain categories such as the derived category of a noetherian ring (by \cite{NeeChro}), and in the generality we have stated it the conjecture is actually false. For instance Keller has given a counterexample in \cite{KellerSmashing}, although Krause in \cite{KrTele} shows that a slightly weaker version of the conjecture does hold. Our version in the relative setting is as follows:
\begin{defn*}[\ref{defn_relative_tele}]
We say the \emph{relative telescope conjecture} holds for $\mathcal{K}$ with respect to the action of $\mathcal{T}$ if every smashing $\mathcal{T}$-submodule $\mathcal{S}\cie \mathcal{K}$ (this means $\mathcal{S}$ is smashing in $\mathcal{K}$ and $\mathcal{T}\times\mathcal{S} \stackrel{*}{\to} \mathcal{K}$ factors via $\mathcal{S}$  ) is generated as a localizing subcategory by compact objects of $\mathcal{K}$. 
\end{defn*}
We give sufficient conditions for the relative telescope conjecture to hold for the action of $\mathcal{T}$ on $\mathcal{K}$. In order to state one of our results let us introduce the following assignments relating subsets of $\Spc \mathcal{T}^c$ and localizing submodules of $\mathcal{K}$ i.e., those localizing subcategories of $\mathcal{K}$ stable under the action of $\mathcal{T}$.
\begin{defn*}[\ref{defn_vis_sigmatau}]
There are order preserving assignments
\begin{displaymath}
\left\{ \begin{array}{c}
\text{subsets of}\; \Spc\mathcal{T}^c 
\end{array} \right\}
\xymatrix{ \ar[r]<1ex>^\t \ar@{<-}[r]<-1ex>_\s &} \left\{
\begin{array}{c}
\text{localizing submodules} \; \text{of} \; \mathcal{K} \\
\end{array} \right\} 
\end{displaymath}
where for a localizing submodule $\mathcal{L}$ we set
\begin{displaymath}
\s(\mathcal{L}) = \supp \mathcal{L} = \{x \in \Spc\mathcal{T}^c \; \vert \; \mathit{\Gamma}_x\mathbf{1}*\mathcal{L} \neq 0\}
\end{displaymath}
for a subset $W$ of $\Spc \mathcal{T}^c$
\begin{displaymath}
\t(W) = \{A \in \mathcal{K} \; \vert \; \supp A \cie W\}
\end{displaymath}
and both the subsets and subcategories are ordered by inclusion.
\end{defn*}
Our theorem is:
\begin{thm*}[\ref{thm_rel_tele}]
Suppose $\mathcal{T}$ is rigidly-compactly generated, has a monoidal model, and $\Spc\mathcal{T}^c$ is noetherian. Let $\mathcal{T}$ act on a compactly generated triangulated category $\mathcal{K}$ so that the support of any compact object of $\mathcal{K}$ is a specialization closed subset of $\s\mathcal{K}$ and for each irreducible closed subset $\mathcal{V}$ in $\s\mathcal{K}$ there exists a compact object whose support is precisely $\mathcal{V}$. Furthermore, suppose the assignments $\s$ and $\t$ give a bijection between localizing submodules of $\mathcal{K}$ and subsets of $\s\mathcal{K}$. Then the relative telescope conjecture holds for $\mathcal{K}$ i.e., every smashing $\mathcal{T}$-submodule of $\mathcal{K}$ is generated, as a localizing subcategory, by objects compact in $\mathcal{K}$. In particular, if every localizing subcategory of $\mathcal{K}$ is stable under the action of $\mathcal{T}$, for instance if $\mathcal{T}$ is generated as a localizing subcategory by the tensor unit, then the usual telescope conjecture holds for $\mathcal{K}$.
\end{thm*}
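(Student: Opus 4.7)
The strategy is to use the bijection furnished by $\s$ and $\t$ to reduce the telescope conjecture to a statement about supports, and then to realise compact generators for $\mathcal{S}$ by applying the acyclization $\mathit{\Gamma}_\mathcal{S}$ to the compact objects supplied by the hypothesis. Let $\mathcal{S}$ be a smashing $\mathcal{T}$-submodule of $\mathcal{K}$, set $W = \s\mathcal{S}$, and write $\mathcal{L}$ for the localizing subcategory of $\mathcal{K}$ generated by the compact objects of $\mathcal{K}$ lying in $\mathcal{S}$. The goal is to show $\mathcal{L} = \mathcal{S}$.

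I would first verify that $\mathcal{L}$ is itself a $\mathcal{T}$-submodule. For $c \in \mathcal{S} \cap \mathcal{K}^c$ and $X \in \mathcal{T}^c$, rigidity of $\mathcal{T}^c$ yields $X*c \in \mathcal{K}^c$, while the submodule condition on $\mathcal{S}$ gives $X*c \in \mathcal{S}$, so $X*c \in \mathcal{L}$; for a general $X \in \mathcal{T}$ one writes $X$ as a homotopy colimit of compacts and invokes closure of $\mathcal{L}$ under coproducts and triangles. Since $\mathcal{L} \cie \mathcal{S}$, the hypothesised bijection reduces matters to showing $\s\mathcal{L} \supseteq W$.

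The technical core is to show that $\mathit{\Gamma}_\mathcal{S}\colon \mathcal{K} \to \mathcal{S}$ preserves compactness and that $\supp \mathit{\Gamma}_\mathcal{S} c = \supp c \cap W$ for every $c \in \mathcal{K}$. Compactness preservation follows from smashing: $\mathit{\Gamma}_\mathcal{S} c$ is compact in $\mathcal{S}$ because coproducts in $\mathcal{S}$ agree with those in $\mathcal{K}$, and smashing lifts compactness from $\mathcal{S}$ to $\mathcal{K}$. The support formula uses that $\mathit{\Gamma}_\mathcal{S}$ is $\mathcal{T}$-linear---so $\mathit{\Gamma}_y\mathbf{1}*\mathit{\Gamma}_\mathcal{S} c \iso \mathit{\Gamma}_\mathcal{S}(\mathit{\Gamma}_y\mathbf{1}*c)$---together with the observation that $\mathcal{S}^\perp$ is also a localizing submodule (by rigidity of $\mathcal{T}^c$) whose support is $\s\mathcal{K} \setminus W$, deduced from the injectivity of $\t$ and $\mathcal{S} \cap \mathcal{S}^\perp = 0$. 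Now, given $x \in W$, the sobriety of $\Spc\mathcal{T}^c$ makes $\overline{\{x\}} \cap \s\mathcal{K}$ irreducible closed in $\s\mathcal{K}$, so the hypothesis furnishes a compact $c_x \in \mathcal{K}^c$ with $\supp c_x = \overline{\{x\}} \cap \s\mathcal{K}$; then $\mathit{\Gamma}_\mathcal{S} c_x \in \mathcal{S} \cap \mathcal{K}^c \cie \mathcal{L}$ has support $\overline{\{x\}} \cap \s\mathcal{K} \cap W$, which contains $x$, so $x \in \s\mathcal{L}$. This yields $\s\mathcal{L} = W$ and therefore $\mathcal{L} = \mathcal{S}$ by the bijection.

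The main obstacle is the technical core: carefully verifying both compactness preservation by $\mathit{\Gamma}_\mathcal{S}$ and the precise support formula $\supp \mathit{\Gamma}_\mathcal{S} c = \supp c \cap W$, each of which requires close attention to the $\mathcal{T}$-linearity of the smashing localization and to the identification of $\mathcal{S}^\perp$ as a submodule with complementary support. The final sentence of the theorem is then immediate: when every localizing subcategory of $\mathcal{K}$ is automatically a $\mathcal{T}$-submodule---for example when $\mathcal{T}$ is generated by $\mathbf{1}$ as a localizing subcategory---smashing $\mathcal{T}$-submodules coincide with smashing subcategories, so the relative telescope conjecture specialises to the usual one.
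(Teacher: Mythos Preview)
Your argument has a genuine gap at exactly the point you identify as the technical core: the assertion that $\mathit{\Gamma}_\mathcal{S}$ sends compact objects of $\mathcal{K}$ to compact objects. Your justification---that coproducts in $\mathcal{S}$ agree with those in $\mathcal{K}$---only shows that the inclusion $i_*\colon \mathcal{S}\to\mathcal{K}$ preserves compactness (which, combined with the fact that $i^!$ preserves coproducts, does give $\mathcal{S}^c\cie\mathcal{K}^c$). It says nothing about $i^! = \mathit{\Gamma}_\mathcal{S}$. Concretely, for $c\in\mathcal{K}^c$ and $A_j\in\mathcal{S}$ one has
\[
\Hom_\mathcal{S}(i^!c,\textstyle\coprod A_j)\;\iso\;\Hom_\mathcal{K}(i_*i^!c,\textstyle\coprod i_*A_j),
\]
and from the triangle $i_*i^!c\to c\to L_\mathcal{S}c$ one would need $\Hom_\mathcal{K}(L_\mathcal{S}c,\mathcal{S})=0$ to conclude; but $L_\mathcal{S}c$ lies in $\mathcal{S}^\perp$, and there is no orthogonality in that direction. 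In fact the claim is false in general: take $\mathcal{K}$ to be Keller's counterexample with $\mathcal{T}=\mathcal{K}$ acting on itself. There $\mathit{\Gamma}_\mathcal{S}\mathbf{1}$ generates $\mathcal{S}$, so if it were compact in $\mathcal{S}$ (hence in $\mathcal{K}$) the telescope conjecture would hold, contradiction. Your argument therefore assumes precisely what is to be proved; the hypotheses on supports of compacts are never used to establish the compactness step.

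The paper's route is different and makes essential use of those hypotheses. The crux is Lemma~\ref{lem_smashing_supp_comm}: one shows that $\s\mathcal{S}$ is \emph{specialization closed} in $\s\mathcal{K}$. This relies on Lemma~\ref{lem_imm_spec_closed}, which uses the compact object $k$ with $\supp k=\mathcal{V}(x)\cap\s\mathcal{K}$ to produce, via localization, a compact object in $L_{\mathcal{Z}(y)}\mathcal{K}$ whose support would fail to be specialization closed if certain orthogonality held---contradicting Lemma~\ref{lem_quot_supp_closed}. Once $\s\mathcal{S}$ is known to be specialization closed, one picks a specialization closed $W\cie\Spc\mathcal{T}^c$ with $W\cap\s\mathcal{K}=\s\mathcal{S}$; then $\mathcal{T}_W$ is generated by objects of $\mathcal{T}^c$, and Lemma~\ref{lem_subset_subcat} together with Proposition~\ref{prop_action_generation} yields $\mathcal{S}=\mathcal{T}_W*\mathcal{K}$ generated by compacts of $\mathcal{K}$. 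The hypotheses on compact supports enter precisely in proving specialization closure of $\s\mathcal{S}$, not in any compactness-preservation statement for $\mathit{\Gamma}_\mathcal{S}$.
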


We also prove several results that show one can work locally with actions to facilitate computations. Rather than stating the technical results here let us mention that we get a new proof of the following result (see \cite{AJS3} Corollary 4.13 and \cite{BaRickard} Corollary 6.8) which follows painlessly by applying our formalism to the classification results of Neeman and Thomason.

\begin{cor*}[\ref{cor_epicwin}]
Let $X$ be a noetherian scheme. Then, letting $D(X)$ act on itself, the assignments $\s$ and $\t$ give a bijection between subsets of $X$ and localizing $\otimes$-ideals of $D(X)$. Furthermore, the relative telescope conjecture holds.
\end{cor*}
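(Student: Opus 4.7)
The plan is to apply Theorem~\ref{thm_rel_tele} to the self-action of $\mathcal{T}=\mathcal{K}=D(X)$, verifying the hypotheses one by one; the principal nontrivial input will be the Alonso--Jer\'emias--Souto classification of localizing subcategories of $D(X)$.

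First I record the standing hypotheses on $\mathcal{T}=D(X)$. Since $X$ is noetherian, and in particular quasi-compact and quasi-separated, $D(X)$ is rigidly-compactly generated with $D(X)^c$ the perfect complexes; a monoidal model is supplied by an appropriate model structure on $\mathrm{Ch}(\QCoh X)$ (for example a flat model \`a la Gillespie). The combined work of Thomason and Balmer identifies $\Spc D(X)^c$ with the underlying space of $X$, which is noetherian by assumption. Thus the general hypotheses of Theorem~\ref{thm_rel_tele} on $\mathcal{T}$ hold.

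Next I verify the hypotheses on the action. Because $X$ is noetherian, every perfect complex has closed, hence specialization closed, support, so the first support condition is automatic. For an irreducible closed $\mathcal{V}=\overline{\{x\}}\cie X$, Thomason's classification together with quasi-compactness of $\mathcal{V}$ produces a single perfect complex with support precisely $\mathcal{V}$; concretely, on an affine patch one uses a Koszul complex on generators of a defining ideal of $\mathcal{V}$, and passes to a suitable direct sum to realize $\mathcal{V}$ globally. In the self-action case localizing submodules coincide with localizing $\otimes$-ideals of $D(X)$, and since the tensor unit $\str_X$ is a compact generator with full support, $\s\mathcal{K}=X$. The remaining hypothesis, that the assignments $\s$ and $\t$ are mutually inverse bijections between subsets of $X$ and localizing $\otimes$-ideals of $D(X)$, is exactly the Alonso--Jer\'emias--Souto classification for noetherian schemes.

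With all hypotheses in place, Theorem~\ref{thm_rel_tele} applies and delivers both conclusions of the corollary at once: the bijection between subsets of $X$ and localizing $\otimes$-ideals of $D(X)$ is built into the hypotheses, while the conclusion of the theorem is the relative telescope conjecture, which in this self-action setting asserts that every smashing $\otimes$-ideal of $D(X)$ is generated, as a localizing subcategory, by perfect complexes. The main obstacle is not any internal step but rather the importation of the Alonso--Jer\'emias--Souto theorem as the $\s$--$\t$ bijection hypothesis; once that classification is granted, the corollary is a direct specialization of the abstract machinery of Theorem~\ref{thm_rel_tele}, and the point of the proof is precisely that the framework of the paper packages these known inputs into the desired telescope and classification statements.
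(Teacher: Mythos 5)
Your proof diverges from the paper's on the classification half of the statement, and the divergence matters. The paper's proof of the bijection (in Section~\ref{sec_locally}) does \emph{not} invoke the Alonso--Jer\'emias--Souto theorem; the entire point of Corollary~\ref{cor_epicwin}, as the introduction advertises, is to give a \emph{new} proof of that result. The paper's route is: identify $\Spc D^{\mathrm{perf}}(X)\iso X$ (Thomason), use Lemma~\ref{lem_open_loc_a} to see that the Bousfield localization $D(X)(U)$ at a quasi-compact open $U$ is just $D(U)$, take an affine cover, feed Neeman's affine classification from \cite{NeeChro} into Theorem~\ref{thm_bijection_local}, and conclude that the local $\s$--$\t$ bijections glue to a global one. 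You instead import the global classification wholesale as the $\s$--$\t$ hypothesis of Theorem~\ref{thm_rel_tele}, which makes the classification half of your argument circular: you are citing the very result the corollary is meant to re-derive. Once the bijection is granted, your deduction of the telescope conjecture from Theorem~\ref{thm_rel_tele} is exactly what the paper intends and your verification of the remaining hypotheses is essentially correct. One small slip: a ``suitable direct sum'' of Koszul complexes on affine patches is not perfect, so it cannot serve as the compact object with support a prescribed irreducible closed $\mathcal{V}$; the correct justification is that every closed subset of a noetherian scheme has quasi-compact complement, hence by Thomason's classification is the support of a single perfect complex. Also, your remark that $\s\mathcal{K}=X$ for the self-action rests on the support detecting vanishing, which is Theorem~\ref{thm_general_ltg}(ii) and itself requires the monoidal model hypothesis you (correctly) record. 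So: the telescope part agrees with the paper's strategy, but for the bijection you should replace the appeal to AJS with the local-to-global argument via Lemma~\ref{lem_open_loc_a} and Theorem~\ref{thm_bijection_local}, taking only the affine case as input.
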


Having stated the main results let us now give a brief outline of the paper. After (very) briefly recalling some preliminary material on tensor triangular geometry in Section \ref{sec_prelims} we give in Section \ref{sec_actions} the definition of a left action and prove some basic technical results concerning generators and the formation of action closed subcategories. In Section \ref{sec_rcg} we restrict to studying actions by rigidly-compactly generated triangulated categories and produce the localization sequences which allow us to define supports in Section \ref{sec_supports}, where we also establish the fundamental properties of the support. Our version of the local-to-global principle is introduced in Section \ref{ssec_ltg} and we prove in Theorem \ref{thm_general_ltg} that, as stated above, it holds for any rigidly-compactly generated triangulated category coming from a monoidal model category. In Section \ref{sec_tele} we define the relative telescope conjecture and prove two general results giving sufficient conditions for action closed smashing subcategories to be generated by compact objects of the ambient category. The penultimate section provides tools for working with actions locally with respect to a cover of the spectrum by quasi-compact opens. In particular we prove that supports can be computed locally and classification of action closed subcategories can be checked locally. Finally, in Section \ref{sec_BIK}, we make precise the sense in which our results relate to the formalism of Benson, Iyengar, and Krause at least in the absence of a grading.

We have now given some details about what is in the paper. However, it is, in this case, important to say a little about what is \emph{not} in the paper. We only provide a single example (Corollary \ref{cor_epicwin}) illustrating the utility of the machinery developed and thus one might be led to wonder if all this formalism is somewhat sterile. We wish to assure the reader that in fact other applications already exists. Parts of the formalism are used in \cite{DSgraded} to give a classification of the localizing $\otimes$-ideals of the derived category of a graded noetherian commutative ring (where the grading can come from any finitely generated abelian group). The motivating application was to give a classification of the localizing subcategories of certain singularity categories. The details of this application can be found in \cite{Stevensonclass}. As an enticement we offer the following:

\begin{thm*}
Let $Q$ be a regular local ring and let $\{q_1,\ldots,q_c\}$ be a regular sequence in $Q$. Set $(R,\mathfrak{m},k) = Q/(q_1,\ldots,q_n)$ and let us assume that
\begin{displaymath}
\dim_k \mathfrak{m}/\mathfrak{m}^2 - \dim R = c.
\end{displaymath}
Denote by $Y$ the hypersurface in $\mathbb{P}^{c-1}_Q$ defined by $\sum_{i=1}^n q_ix_i$ where the $x_i$ are coordinates on $\mathbb{P}^{c-1}_Q$. Then the assignments of Definition \ref{defn_vis_sigmatau} give order preserving bijections
\begin{displaymath}
\left\{ \begin{array}{c}
\text{specialization closed} \\ \text{subsets of}\; \Sing Y
\end{array} \right\}
\xymatrix{ \ar[r]<1ex>^{\tau} \ar@{<-}[r]<-1ex>_{\sigma} &} \left\{
\begin{array}{c}
\text{thick subcategories} \\ \text{of} \; D_{Sg}(R)
\end{array} \right\},
\end{displaymath}
where $D_{Sg}(R) = D^b(R\text{-}\module)/D^{\mathrm{perf}}(R)$ is the singularity category of $R$ and $\Sing Y$ denotes the set of singular points of $Y$.
\end{thm*}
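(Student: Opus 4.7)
The strategy is to exhibit this theorem as an application of Theorem \ref{thm_rel_tele}. The formal input needed is a compactly generated triangulated category $\mathcal{K}$ whose compact part is (up to idempotent completion) equivalent to $D_{Sg}(R)$, equipped with an action of $D(Y)$ in the sense of Section \ref{sec_actions}. Since $D(Y)$ is rigidly-compactly generated, comes with a standard monoidal model, and (by Thomason's classification) $\Spc D(Y)^c$ is homeomorphic to the noetherian space $Y$, the local-to-global principle of Theorem \ref{thm_general_ltg} is automatically available, and the classification machinery of Definition \ref{defn_vis_sigmatau} can be applied.

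The substantive step is the construction of this $D(Y)$-action, and this is where essentially all the work lies; it is the content of \cite{Stevensonclass}. The geometric picture is that the Gulliksen/Avramov cohomology operators produce a homomorphism from $Q[x_1,\ldots,x_c]$ into the graded centre of $D_{Sg}(R)$, and the relation $\sum q_i x_i$ acts as zero on Ext between any two $R$-modules (it is a boundary); consequently the action factors through the homogeneous coordinate ring of $Y$. Realising this at the level of triangulated categories, through matrix factorisations or a Knörrer/Orlov-type equivalence for the total hypersurface $\sum q_i x_i$, gives a big version $\mathcal{K}$ of $D_{Sg}(R)$ the structure of a $D(Y)$-module category. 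The hypothesis $\dim_k \mathfrak{m}/\mathfrak{m}^2 - \dim R = c$ ensures that $\{q_1,\ldots,q_c\}$ is an \emph{essential} regular sequence, which is precisely what is needed to see that the correct parameter space is all of $Y$ rather than some linear projection of it.

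With the action in hand, the two geometric hypotheses of Theorem \ref{thm_rel_tele} remain to be checked. That the support of a compact object is a specialization closed subset of $\Sing Y$ follows because compactness in $\mathcal{K}$ forces supports to be closed, and at a smooth point $y \in Y$ the local piece of the singularity category vanishes (perfect complexes act trivially on $D_{Sg}$), so supports are forced into $\Sing Y$. For the second hypothesis, each irreducible closed $\mathcal{V} = \overline{\{y\}} \subseteq \Sing Y$ is realised as the support of a compact object by pushing a residue-field style object at $y$ through the action: the compatibility of the action with the monoidal structure on $D(Y)$ and the identification of $\Spc D(Y)^c$ with $Y$ pin down the support to be exactly $\mathcal{V}$. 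Combining Theorem \ref{thm_general_ltg} with Theorem \ref{thm_rel_tele} then yields the bijection between localizing submodules of $\mathcal{K}$ and subsets of $\Sing Y$, together with the fact that every smashing submodule is generated by compact objects of $\mathcal{K}$. Passing to smashing submodules generated by a single compact, equivalently restricting $\sigma$ and $\tau$ to the compact part, produces the stated bijection between specialization closed subsets of $\Sing Y$ and thick subcategories of $\mathcal{K}^c \simeq D_{Sg}(R)$. The main obstacle is manufacturing the $D(Y)$-action on a large version of the singularity category; everything else is an essentially formal consequence of the support-theoretic machinery developed in the paper.
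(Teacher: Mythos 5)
The paper does not actually prove this theorem: it is stated in the introduction explicitly ``as an enticement,'' with the words ``The details of this application can be found in \cite{Stevensonclass}.'' So there is no proof here to compare your sketch against. Given that, your outline is a reasonable reconstruction of the intended route: you correctly identify that the heart of the matter is manufacturing a $D(Y)$-action on a big version of the singularity category via Gulliksen/Avramov cohomology operators, Orlov's theorem, and the total hypersurface $\sum q_i x_i$; and that Theorems \ref{thm_general_ltg} and \ref{thm_rel_tele} (plus noetherianity of $\Spc D(Y)^c \cong Y$ and the existence of a monoidal model for $D(Y)$) then supply the support-theoretic machinery once that action is in place.

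There is one genuine gap in your final reduction, and it is worth flagging because it is not merely bookkeeping. The machinery of Definition \ref{defn_vis_sigmatau} and Theorems \ref{thm_general_ltg}, \ref{thm_rel_tele} classifies localizing $\mathcal{T}$-\emph{submodules} of $\mathcal{K}$ and (at the compact level, via the telescope conjecture) thick $\mathcal{T}^c$-\emph{submodules} of $\mathcal{K}^c$. The theorem you are asked to prove classifies \emph{all} thick subcategories of $D_{Sg}(R)$, with no submodule condition. These agree only if every thick subcategory of $D_{Sg}(R)$ is automatically closed under the action of $D(Y)^c$. Since $Y$ is a projective hypersurface, $D(Y)^c = D^{\mathrm{perf}}(Y)$ is \emph{not} generated as a thick subcategory by its tensor unit (cf.\ Lemma \ref{lem_gen_locpreserving} and Remark \ref{rem_tele}, which give the automatic-submodule conclusion only when $\mathbf{1}$ generates), so you cannot appeal to that shortcut. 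Closing this gap requires a separate argument, specific to the graded/projective setting of \cite{Stevensonclass}, showing that the structure of $D_{Sg}(R)$ forces every thick subcategory to be a $D(Y)^c$-submodule (for instance because the relevant twists by line bundles are already implemented by the shift functor on the singularity category). Without addressing this point your sketch proves the theorem only for thick $\otimes$-submodules, which is a weaker statement than what is claimed.
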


\begin{ack}
This article consists of results taken from my PhD thesis written under the supervision of Amnon Neeman at the Australian National University; naturally I would like to thank Amnon for his interest and support as well as for many stimulating conversations and suggesting many improvements. Many thanks are also due to Paul Balmer and Ivo Dell'Ambrogio for numerous helpful comments on a preliminary version of this manuscript. Finally, I would like to thank the anonymous referee for their careful reading of the paper and valuable comments.
\end{ack}

\section[Tensor triangular geometry]{Preliminaries on Tensor Triangular Geometry}\label{sec_prelims}
We give a very brief introduction, mostly to fix notation, to the aspects of Balmer's tensor triangular geometry, as developed in \cite{BaSpec}, \cite{BaSSS} and \cite{BaRickard}, which will be necessary for our purposes. For the reader who desires a more thorough introduction we recommend the survey article \cite{BaTTG}.

Let $\mathcal{T}$ be an essentially small \emph{tensor triangulated category} i.e., we have a triple $(\mathcal{T},\otimes,\mathbf{1})$ where $(\otimes, \mathbf{1})$ is a symmetric monoidal structure on $\mathcal{T}$ which is exact in each variable. 

We say that a thick i.e., summand closed and triangulated, subcategory $\mathcal{I}$ of $\mathcal{T}$ is a $\otimes$\emph{-ideal} if for all $X\in \mathcal{T}$ and $Y\in \mathcal{I}$ the object $X\otimes Y$ is contained in $\mathcal{I}$. We say that a $\otimes$-ideal $\mathcal{P}$ is \emph{prime} if $X\otimes Y$ lies in $\mathcal{P}$ if and only if one of $X$ or $Y$ is an object of $\mathcal{P}$.

The set of prime ideals of $\mathcal{T}$ is denoted $\Spc \mathcal{T}$ and we refer to it as the \emph{spectrum of} $\mathcal{T}$.

For each $X\in \mathcal{T}$ we define the \emph{support} of $X$ to be the set
\begin{displaymath}
\supp X = \{\mathcal{P}\in \Spc \mathcal{T} \; \vert \; X\notin \mathcal{P}\}.
\end{displaymath}
These subsets constitute a basis of closed subsets for a topology on $\mathcal{T}$ which we call the Zariski topology and from now on we consider the spectrum as a topological space. In \cite{BaSpec} Balmer proves that the spectrum of $\mathcal{T}$ together with this notion of support is universal amongst reasonable support data for objects of $\mathcal{T}$. Furthermore, the support gives rise to a classification of radical $\otimes$-ideals.  In order to state this result we need to recall the notion of Thomason subsets.

\begin{defn}
Let $X$ be a topological space. A subset $V\cie X$ is a \emph{Thomason subset} if it is of the form $V = \Un_i V_i$ where each $V_i$ is a closed subset of $X$ with quasi-compact complement.
\end{defn}

Let us also recall that a $\otimes$-ideal $\mathcal{I}$ is \emph{radical} if $X^{\otimes n}\in \mathcal{I}$ implies that $X\in \mathcal{I}$. We will often omit the word radical i.e., we take it as understood that thick $\otimes$-ideal means radical thick $\otimes$-ideal. For the class of essentially small triangulated categories we will mostly be concerned with, namely rigid tensor triangulated categories, all thick $\otimes$-ideals are radical in any case (see for example \cite{BaSpec} Remark 4.3).

\begin{thm}[\cite{BaSpec} 4.10]
Let $\mathfrak{S}$ denote the set of Thomason subsets of $\Spc \mathcal{T}$ and let $\mathfrak{R}$ denote the set of thick radical $\otimes$-ideals of $\mathcal{T}$. Then there is an order preserving bijection $\mathfrak{S} \stackrel{\sim}{\to} \mathfrak{R}$ given by the assignments
\begin{displaymath}
\mathcal{V} \mapsto \{a\in \mathcal{T} \; \vert \; \supp a \cie \mathcal{V}\} \;\; \text{for} \;\; \mathcal{V}\in \mathfrak{S}
\end{displaymath}
and
\begin{displaymath}
\mathcal{I} \mapsto \Un_{a\in \mathcal{I}} \supp a \;\; \text{for} \;\; \mathcal{I} \in \mathfrak{R}.
\end{displaymath}
\end{thm}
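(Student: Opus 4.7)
The plan is to verify that both assignments are well defined, establish order preservation, and then prove that the two maps are mutually inverse. The key tools are the formal properties of the support that Balmer established in the setup of the spectrum: for $a, b, c \in \mathcal{T}$ one has $\supp 0 = \varnothing$, $\supp a[1] = \supp a$, $\supp(a\oplus b) = \supp a \cup \supp b$, $\supp(a\otimes b) = \supp a \cap \supp b$, and $\supp a \cie \supp b \cup \supp c$ whenever there is an exact triangle $b\to a\to c\to b[1]$. In addition I will need the (nontrivial) fact that the assignment $a \mapsto \supp a$ is a surjection onto the closed subsets of $\Spc \mathcal{T}$ with quasi-compact complement.

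For well-definedness of $\mathcal{V}\mapsto \mathcal{I}_\mathcal{V} := \{a : \supp a \cie \mathcal{V}\}$, the triangle and direct sum identities make $\mathcal{I}_\mathcal{V}$ thick, the product formula makes it a $\otimes$-ideal, and $\supp(a^{\otimes n}) = \supp a$ makes it radical. For $\mathcal{I}\mapsto \mathcal{V}_\mathcal{I} := \Un_{a\in \mathcal{I}} \supp a$, each $\supp a$ is closed with quasi-compact complement by hypothesis on the basis, so the union is a Thomason subset by definition. Order preservation in both directions is immediate. To show $\mathcal{V}_{\mathcal{I}_\mathcal{V}} = \mathcal{V}$, the inclusion $\cie$ is tautological, and the reverse inclusion uses the surjectivity statement above: writing $\mathcal{V} = \Un_i V_i$ with each $V_i$ closed and $\Spc\mathcal{T}\setminus V_i$ quasi-compact, realize $V_i = \supp a_i$ for some $a_i$; then $a_i \in \mathcal{I}_\mathcal{V}$ and so $V_i \cie \mathcal{V}_{\mathcal{I}_\mathcal{V}}$.

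The main obstacle lies in the other composition, namely showing $\mathcal{I}_{\mathcal{V}_\mathcal{I}} = \mathcal{I}$. One containment is again formal. For the reverse, suppose $b\in \mathcal{T}$ has $\supp b \cie \Un_{a\in \mathcal{I}} \supp a$; I must argue $b\in \mathcal{I}$. Since $\supp b$ is a closed subset with quasi-compact complement and the $\supp a$ for $a\in \mathcal{I}$ form an open cover of it (in the constructible sense), a compactness argument should extract finitely many $a_1,\dots,a_n\in \mathcal{I}$ with $\supp b \cie \supp a_1 \cup \cdots \cup \supp a_n = \supp(a_1\oplus\cdots\oplus a_n) =: \supp a$, where $a\in \mathcal{I}$. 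At this point one is reduced to the core lemma: whenever $\supp b \cie \supp a$, the object $b$ lies in the radical $\otimes$-ideal generated by $a$; since $\mathcal{I}$ is a radical $\otimes$-ideal containing $a$, this finishes the proof. This core lemma is where abstract tensor-nilpotence enters through primes of $\Spc\mathcal{T}$: if $b\notin \sqrt{\langle a\rangle_\otimes}$, a Zorn's lemma argument produces a prime $\mathcal{P}$ containing $\langle a \rangle_\otimes$ but not $b$, placing $\mathcal{P}$ in $\supp b \setminus \supp a$, a contradiction. I expect this last implication, and the quasi-compactness argument packaging finitely many $a_i$ together, to be the genuinely hard technical step; the rest of the proof is a largely formal bookkeeping of the support calculus.
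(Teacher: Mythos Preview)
The paper does not supply its own proof of this statement: it is quoted verbatim from Balmer \cite{BaSpec} Theorem~4.10 as part of the preliminaries in Section~\ref{sec_prelims}, with no argument given. So there is nothing in the paper to compare your proposal against.

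That said, your outline is essentially Balmer's original proof. A couple of small comments on the presentation. First, your compactness step is correct in spirit but slightly misphrased: the sets $\supp a$ are closed, not open, so they do not literally form an open cover of $\supp b$. The clean way to say it is to pass to the Hochster-dual topology on $\Spc\mathcal{T}$, in which the closed sets with quasi-compact complement become a basis of quasi-compact opens; then $\supp b$ is quasi-compact dual-open and the $\supp a$ are a dual-open cover, whence finitely many suffice. Your parenthetical ``in the constructible sense'' shows you have the right idea. Second, the ``core lemma'' you isolate---that $\supp b \subseteq \supp a$ forces $b\in\sqrt{\langle a\rangle_\otimes}$---is exactly Balmer's key step, and your Zorn's lemma sketch producing a prime separating $b$ from $\langle a\rangle_\otimes$ is the standard route (it amounts to the fact that every radical $\otimes$-ideal is an intersection of primes). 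So the proposal is sound, and matches the source argument rather than anything in the present paper.
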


We now wish to consider a rigidly-compactly generated tensor triangulated category $\mathcal{T}$, i.e., $(\mathcal{T},\otimes,\mathbf{1})$ is a compactly generated triangulated category $\mathcal{T}$ together with a symmetric monoidal structure such that the monoidal product $\otimes$ is an exact coproduct preserving functor in each variable and the compact objects $\mathcal{T}^c$ are a rigid tensor subcategory. Rigidity is the condition that each compact object be strongly dualizable, further details are given in Section \ref{sec_rcg}.

Given a Thomason subset $\mathcal{V}\cie \Spc \mathcal{T}^c$ we denote by $\mathcal{T}^c_\mathcal{V}$ the thick subcategory of compact objects supported on $\mathcal{V}$. We let $\mathcal{T}_\mathcal{V}$ be the localizing subcategory generated by $\mathcal{T}^c_\mathcal{V}$ and note that $\mathcal{T}_\mathcal{V}$ is smashing as it is generated by compact objects of $\mathcal{T}$. Let us spend a little time spelling out the consequences of this fact. The subcategory $\mathcal{T}_\mathcal{V}$ gives rise to a smashing localization sequence
\begin{displaymath}
\xymatrix{
\mathit{\Gamma}_\mathcal{V}\mathcal{T} \ar[r]<0.5ex>^(0.6){i_*} \ar@{<-}[r]<-0.5ex>_(0.6){i^!} & \mathcal{T} \ar[r]<0.5ex>^(0.5){j^*} \ar@{<-}[r]<-0.5ex>_(0.5){j_*} & L_\mathcal{V}\mathcal{T}
}
\end{displaymath}
i.e., all four functors are exact and coproduct preserving, $i_*$ and $j_*$ are fully faithful, $i^!$ is right adjoint to $i_*$, and $j_*$ is right adjoint to $j^*$. In particular there are associated coproduct preserving acyclization and localization functors given by $i_*i^!$ and $j_*j^*$ respectively. As in \cite{HPS} Definition 3.3.2 this gives rise to Rickard idempotents which we denote by $\mathit{\Gamma}_\mathcal{V}\mathbf{1}$ and $L_\mathcal{V}\mathbf{1}$ with the property that 
\begin{displaymath}
i_*i^! \iso \mathit{\Gamma}_\mathcal{V}\mathbf{1}\otimes(-) \quad \text{and} \quad j_*j^* \iso L_\mathcal{V}\mathbf{1}\otimes(-).
\end{displaymath}
It follows that they are $\otimes$-orthogonal by the usual properties of localization and acyclization functors. We will also sometimes write $\mathit{\Gamma}_\mathcal{V}\mathcal{T}$ for the category associated to $\mathcal{V}$.

One can go on to define supports for objects of $\mathcal{T}$ taking values in some subset of $\Spc \mathcal{T}^c$. In fact we will wish to consider supports and the associated tensor idempotents but we wait until they are required in Section \ref{sec_supports} to introduce them.

\section{Tensor actions}\label{sec_actions}
To begin we propose a definition of what it means for a tensor triangulated category to act on another triangulated category. We define here the notion of left action and express a sinistral bias by only considering left actions and referring to them simply as actions.

\begin{convention}
Throughout by a \emph{tensor triangulated category} $(\mathcal{T},\otimes,\mathbf{1})$ we mean a triangulated category $\mathcal{T}$ together with a symmetric monoidal structure such that the monoidal product $\otimes$ is an exact functor in each variable. We also require that $\otimes$ preserves whatever coproducts $\mathcal{T}$ might have and interacts well with the suspension as in Definition \ref{defn_action} (3). We do not assume, unless explicitly stated, that the triangulated categories we deal with are essentially small.

By a \emph{compactly generated tensor triangulated category} we mean a tensor triangulated category as above which is compactly generated and such that the compact objects form a tensor subcategory.
\end{convention}

\begin{defn}\label{defn_action}
Let $(\mathcal{T},\otimes,\mathbf{1})$ be a tensor triangulated category and $\mathcal{K}$ a triangulated category. A \emph{left action} of $\mathcal{T}$ on $\mathcal{K}$ is a functor 
\begin{displaymath}
*\colon \mathcal{T}\times \mathcal{K} \to \mathcal{K}
\end{displaymath}
which is exact in each variable, i.e.\ for all $X\in \mathcal{T}$ and $A\in \mathcal{K}$ the functors $X*(-)$ and $(-)*A$ are exact (such a functor is called \emph{biexact}), together with natural isomorphisms
\begin{displaymath}
a_{X,Y,A}\colon (X\otimes Y)*A \stackrel{\sim}{\to} X*(Y*A)
\end{displaymath}
and
\begin{displaymath}
l_A\colon \mathbf{1}*A \stackrel{\sim}{\to} A
\end{displaymath}
for all $X,Y\in \mathcal{T}$, $A\in \mathcal{K}$, compatible with the biexactness of $(-)*(-)$ and satisfying the following conditions:
\begin{itemize}
\item[(1)] The associator $a$ satisfies the pentagon condition which asserts that the following diagram commutes for all $X,Y,Z$ in $\mathcal{T}$ and $A$ in $\mathcal{K}$
\begin{displaymath}
\xymatrix{
& X*(Y*(Z* A)) & \\
X*((Y \otimes Z)*A) \ar[ur]^{X*a_{Y,Z,A}} & & (X\otimes Y)*(Z*A) \ar[ul]_{a_{X,Y,Z*A}} \\
(X\otimes(Y\otimes Z))*A \ar[u]^{a_{X,Y\otimes Z,A}} && ((X\otimes Y)\otimes Z)*A \ar[u]_{a_{X\otimes Y, Z,A}} \ar[ll]
}
\end{displaymath}
where the bottom arrow is the associator of $(\mathcal{T},\otimes,\mathbf{1})$.
\item[(2)] The unitor $l$ makes the following squares commute for every $X$ in $\mathcal{T}$ and $A$ in $\mathcal{K}$
\begin{displaymath}
\xymatrix{
X* (\mathbf{1}*A) \ar[r]^(0.6){X * l_A} & X*A \ar[d]^{1_{X*A}} \\
(X\otimes \mathbf{1})*A \ar[u]^{a_{X,\mathbf{1},A}} \ar[r] & X*A
}
\qquad
\xymatrix{
\mathbf{1}*(X*A) \ar[r]^(0.6){l_{X*A}} & X*A \ar[d]^{1_{X*A}} \\
(\mathbf{1}\otimes X)*A \ar[u]^{a_{\mathbf{1},X,A}} \ar[r] & X*A
}
\end{displaymath}
where the bottom arrows are the right and left unitors of $(\mathcal{T},\otimes,\mathbf{1})$.
\item[(3)] For every $A$ in $\mathcal{K}$ and $r,s \in \int$ the diagram
\begin{displaymath}
\xymatrix{
\S^r \mathbf{1}*\S^s A \ar[r]^(0.6)\sim \ar[d]_{\wr} & \S^{r+s} A \ar[d]^{(-1)^{rs}} \\
\S^r (\mathbf{1}* \S^s A) \ar[r]_(0.6)\sim & \S^{r+s}A
}
\end{displaymath}
is commutative, where the left vertical map comes from exactness in the first variable of the action, the bottom horizontal map is the unitor, and the top map is given by the composite
\begin{displaymath}
\S^r\mathbf{1}*\S^s A \to \S^s(\S^r\mathbf{1} * A) \to \S^{r+s}(\mathbf{1}*A) \stackrel{l}{\to} \S^{r+s}A
\end{displaymath}
whose first two maps use exactness in both variables of the action.
\item[(4)] The functor $*$ distributes over coproducts whenever they exist i.e., for families of objects $\{X_i\}_{i\in I}$ in $\mathcal{T}$ and $\{A_j\}_{j\in J}$ in $\mathcal{K}$, and $X$ in $\mathcal{T}$, $A$ in $\mathcal{K}$ the canonical maps
\begin{displaymath}
\coprod_i (X_i * A) \stackrel{\sim}{\to} (\coprod_i X_i)* A
\end{displaymath}
and
\begin{displaymath}
\coprod_j (X*A_j) \stackrel{\sim}{\to} X*(\coprod_j A_j)
\end{displaymath}
are isomorphisms whenever the coproducts concerned, on both the left and the right of each isomorphism, exist.
\end{itemize}
\end{defn}

\begin{rem}
Given composable morphisms $f,f'$ in $\mathcal{T}$ and $g,g'$ in $\mathcal{K}$ one has
\begin{displaymath}
(f'*g')(f*g) = (f'f*g'g)
\end{displaymath}
by functoriality of $\mathcal{T} \times \mathcal{K} \stackrel{*}{\to} \mathcal{K}$.

We also note it follows easily from the definition that both $0_\mathcal{T}*(-)$ and $(-)*0_\mathcal{K}$ are isomorphic to the zero functor.
\end{rem}

We view $\mathcal{K}$ as a module over $\mathcal{T}$ and from now on we will use the terms module and action interchangeably. There are of course, depending on the context, natural notions of $\mathcal{T}$-submodule.

\begin{defn}\label{defn_submodule}
Let $\mathcal{L}\cie \mathcal{K}$ be a localizing (thick) subcategory. We say $\mathcal{L}$ is a localizing (thick) \emph{$\mathcal{T}$-submodule} of $\mathcal{K}$ if the functor
\begin{displaymath}
\mathcal{T}\times \mathcal{L} \stackrel{*}{\to} \mathcal{K}
\end{displaymath}
factors via $\mathcal{L}$ i.e., $\mathcal{L}$ is closed under the action of $\mathcal{T}$. We note that in the case $\mathcal{K} = \mathcal{T}$ acts on itself by $\otimes$ this gives the notion of a localizing (thick) $\otimes$-ideal of $\mathcal{T}$. By a smashing or compactly generated (by compact objects in the ambient category) submodule we mean the obvious things.
\end{defn}


\begin{notation}\label{not_smallestsubcat}
For a collection of objects $\mathcal{A}$ in $\mathcal{K}$ we denote by $\langle \mathcal{A} \rangle$ the smallest localizing subcategory containing $\mathcal{A}$ and by $\langle \mathcal{A} \rangle_*$ the smallest localizing $\mathcal{T}$-submodule of $\mathcal{K}$ containing $\mathcal{A}$. 

Given also a collection of objects $\mathcal{X}$ of $\mathcal{T}$ we denote by
\begin{displaymath}
\mathcal{X}*\mathcal{A} = \langle X*A\; \vert \; X\in \mathcal{X}, A\in \mathcal{A} \rangle_*
\end{displaymath}
the localizing submodule generated by products of the objects from $\mathcal{X}$ and $\mathcal{A}$.
\end{notation}

\begin{rem}
We do not introduce similar notation for thick subcategories as we will almost invariably work with localizing subcategories. However, it is worth noting that the formal results about submodules we shall prove are generally also true or have obvious analogues for thick submodules.
\end{rem}

\begin{hyp_plain}
From this point forward we assume that both $\mathcal{T}$ and $\mathcal{K}$ have all set-indexed coproducts.
\end{hyp_plain}

The operation of forming submodules is well behaved. We will show below that it commutes with the action in an appropriate sense. Most important for us is the fact that given generating sets for $\mathcal{L}\cie \mathcal{T}$ and $\mathcal{M}\cie \mathcal{K}$ we obtain a generating set for $\mathcal{L}*\mathcal{M}$ as a submodule. First we prove a general lemma (not involving actions), encompassing various standard arguments in the literature, concerning closure properties with respect to families of exact coproduct preserving functors.

\begin{lem}\label{lem_general_closure}
Suppose $\mathcal{R}$ and $\mathcal{S}$ are triangulated categories having enough coproducts, $\{F_\l\}_{\l\in \Lambda}$ is a family of coproduct preserving exact functors $\mathcal{R} \to \mathcal{S}$, and $\mathcal{M}$ is a localizing subcategory of $\mathcal{S}$. Then the full subcategory 
\begin{displaymath}
\mathcal{L} = \{X\in \mathcal{R} \; \vert \; F_\l(X)\in \mathcal{M} \;\; \forall \l \in \Lambda\}
\end{displaymath}
is a localizing subcategory of $\mathcal{R}$. In particular, if $\mathcal{C}$ is a collection of objects of $\mathcal{R}$ such that for all $\l\in \Lambda$ we have $F_\l(\mathcal{C})\cie \mathcal{M}$ then every object $C\in \langle \mathcal{C}\rangle$ satisfies $F_\l(C)\in \mathcal{M}$ for all $\l\in \Lambda$.
\end{lem}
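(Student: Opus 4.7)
The plan is to verify directly from the definition that the subcategory $\mathcal{L}$ satisfies all the closure properties required of a localizing subcategory, by pushing each property through the $F_\l$ and exploiting the corresponding closure of $\mathcal{M}$. Since $F_\l(0)=0\in\mathcal{M}$, the category $\mathcal{L}$ is nonempty, and fullness is immediate because $\mathcal{L}$ is defined by an object-level condition.

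For closure under suspension, exactness of each $F_\l$ gives a natural isomorphism $F_\l(\Sigma X)\iso \Sigma F_\l(X)$, so $X\in\mathcal{L}$ implies $F_\l(\Sigma X)\in\mathcal{M}$ since $\mathcal{M}$ is itself closed under $\Sigma^{\pm 1}$. For closure under triangles, given a distinguished triangle $X\to Y \to Z \to \Sigma X$ in $\mathcal{R}$ with $X,Y\in\mathcal{L}$, applying $F_\l$ yields a distinguished triangle in $\mathcal{S}$ whose first two terms lie in $\mathcal{M}$; the two-out-of-three property of the triangulated subcategory $\mathcal{M}$ then forces $F_\l(Z)\in\mathcal{M}$, so $Z\in\mathcal{L}$. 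For closure under coproducts, if $\{X_i\}_{i\in I}$ is a family in $\mathcal{L}$, the coproduct-preservation of each $F_\l$ gives $F_\l(\coprod_i X_i)\iso \coprod_i F_\l(X_i)$, which lies in $\mathcal{M}$ since $\mathcal{M}$ is localizing. Finally, if $X\oplus Y\in\mathcal{L}$, then $F_\l(X)\oplus F_\l(Y)\iso F_\l(X\oplus Y)\in\mathcal{M}$, and closure of $\mathcal{M}$ under summands places each $F_\l(X)$ and $F_\l(Y)$ in $\mathcal{M}$; thus $\mathcal{L}$ is thick as well.

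For the final assertion, observe that $\mathcal{C}\cie\mathcal{L}$ by hypothesis, and $\mathcal{L}$ is localizing by what we have just shown, so by minimality $\langle\mathcal{C}\rangle\cie\mathcal{L}$, giving $F_\l(C)\in\mathcal{M}$ for all $C\in\langle\mathcal{C}\rangle$ and all $\l\in\Lambda$. There is no real obstacle here; the only thing to be careful about is remembering that all the closure properties of $\mathcal{M}$ (suspension, triangles, coproducts, summands) are needed in parallel, which is precisely why one asks that $\mathcal{M}$ be localizing rather than merely triangulated.
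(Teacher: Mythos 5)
Your proof is correct, but it takes a different (more hands-on) route from the paper. The paper gives a one-line argument: it writes
\[
\mathcal{L} \;=\; \bigcap_{\l\in \Lambda}\ker\bigl(\mathcal{R}\xrightarrow{F_\l}\mathcal{S} \to \mathcal{S}/\mathcal{M}\bigr),
\]
observes that each composite is exact and coproduct-preserving (the quotient functor to the Verdier quotient preserves coproducts precisely because $\mathcal{M}$ is localizing), notes that the kernel of such a functor is localizing, and that an intersection of localizing subcategories is again localizing. You instead verify the closure axioms of a localizing subcategory one at a time, pushing each through $F_\l$ and invoking the corresponding closure of $\mathcal{M}$. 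Both arguments are sound. The paper's version is shorter and more conceptual, and it packages the verification you carried out into two standard facts (kernels of nice functors are localizing; intersections of localizing subcategories are localizing); yours is more elementary in that it never forms the quotient $\mathcal{S}/\mathcal{M}$, which can be a virtue if one is worried about set-theoretic issues with Verdier quotients by large localizing subcategories. Your handling of the final assertion (that $\mathcal{C}\subseteq\mathcal{L}$ and minimality of $\langle\mathcal{C}\rangle$ give the conclusion) is identical to the paper's. One small remark: your separate check of thickness via retracts is redundant, since closure under coproducts already forces closure under summands by the Eilenberg swindle, but it does no harm.
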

\begin{proof}
We can write $\mathcal{L}$ as an intersection of localizing subcategories,
\begin{displaymath}
\mathcal{L} = \bigcap_{\l\in \Lambda}\ker(\mathcal{R}\stackrel{F_\l}{\to}\mathcal{S} \to \mathcal{S}/\mathcal{M}),
\end{displaymath}
so it is immediate that $\mathcal{L}$ is localizing. The second statement then follows. By hypothesis $\mathcal{C}\cie \mathcal{L}$ and since $\mathcal{L}$ is localizing we deduce that $\langle\mathcal{C}\rangle \cie \mathcal{L}$.
\end{proof}

The next lemma makes explicit the way in which we will use this rather general result in the case that $\mathcal{T}$ acts on $\mathcal{K}$.

\begin{lem}\label{lem_general_closure2}
Suppose that $\mathcal{A}$ is a collection of objects of $\mathcal{K}$ such that $\mathcal{A}$ is stable under the action of $\mathcal{T}$. Then $\langle \mathcal{A} \rangle$ is a localizing $\mathcal{T}$-submodule. Similarly, if $\mathcal{X}$ is a collection of objects of $\mathcal{T}$ and $\mathcal{N}$ is a localizing subcategory of $\mathcal{K}$ closed under the action of the objects in $\mathcal{X}$ then $\mathcal{M}$ is closed under the action of $\langle \mathcal{X} \rangle$.
\end{lem}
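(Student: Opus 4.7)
The plan is to deduce both parts of the lemma from Lemma \ref{lem_general_closure} by applying it to the families of exact, coproduct preserving functors provided by the action. The point is that, by biexactness of $*$ together with axiom (4) of Definition \ref{defn_action}, both $X*(-)\colon \mathcal{K}\to\mathcal{K}$ (for fixed $X\in \mathcal{T}$) and $(-)*A\colon \mathcal{T}\to\mathcal{K}$ (for fixed $A\in \mathcal{K}$) are exact and preserve all existing coproducts. So these families fit the hypotheses of the preceding lemma.

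For the first statement, take $\mathcal{R}=\mathcal{S}=\mathcal{K}$, $\mathcal{M}=\langle\mathcal{A}\rangle$, and the family $\{X*(-)\}_{X\in\mathcal{T}}$. The assumption that $\mathcal{A}$ is stable under the action of $\mathcal{T}$ says that $X*A\in\mathcal{A}\subseteq\langle\mathcal{A}\rangle$ for every $X\in\mathcal{T}$ and $A\in\mathcal{A}$. Lemma \ref{lem_general_closure} then forces $X*C\in\langle\mathcal{A}\rangle$ for every $C\in\langle\mathcal{A}\rangle$ and every $X\in\mathcal{T}$, which is precisely the statement that $\langle\mathcal{A}\rangle$ is a localizing $\mathcal{T}$-submodule (it is already localizing by construction).

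For the second statement, swap the roles: take $\mathcal{R}=\mathcal{T}$, $\mathcal{S}=\mathcal{K}$, $\mathcal{M}=\mathcal{N}$, and use the family $\{(-)*A\}_{A\in\mathcal{N}}$. The hypothesis that $\mathcal{N}$ is closed under the action of each object of $\mathcal{X}$ says exactly that $(-)*A$ sends $\mathcal{X}$ into $\mathcal{N}$ for every $A\in\mathcal{N}$. Lemma \ref{lem_general_closure} then yields $Y*A\in\mathcal{N}$ for every $Y\in\langle\mathcal{X}\rangle$ and $A\in\mathcal{N}$, i.e.\ $\mathcal{N}$ is closed under the action of $\langle\mathcal{X}\rangle$.

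There is essentially no hard step: the only thing to be careful about is confirming that biexactness of the action plus axiom (4) really does give exactness and coproduct preservation of the one-variable slices, so that Lemma \ref{lem_general_closure} applies. Once that is noted, both claims are immediate consequences.
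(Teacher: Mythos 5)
Your proof is correct and follows essentially the same route as the paper: apply Lemma \ref{lem_general_closure} with $\mathcal{R}=\mathcal{S}=\mathcal{K}$, family $\{X*(-)\}_{X\in\mathcal{T}}$, and $\mathcal{M}=\langle\mathcal{A}\rangle$ for the first claim, and with $\mathcal{R}=\mathcal{T}$, $\mathcal{S}=\mathcal{K}$, family $\{(-)*A\}_{A\in\mathcal{N}}$, and $\mathcal{M}=\mathcal{N}$ for the second. You also correctly read the evident typo in the statement (the final $\mathcal{M}$ should be $\mathcal{N}$), and the observation that biexactness together with axiom (4) of Definition \ref{defn_action} gives the needed exactness and coproduct preservation of the one-variable functors is exactly the right thing to check.
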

\begin{proof}
In order to prove the first statement one simply applies the previous lemma in the case that $\mathcal{R}=\mathcal{S}=\mathcal{K}$, the family of functors is $\{X*(-)\;\vert\; X\in \mathcal{T}\}$, and $\mathcal{M} = \langle \mathcal{A} \rangle$. The second statement is also immediate by applying the lemma appropriately.
\end{proof}

\begin{lem}\label{lem_alreadysubmod}
Suppose $\mathcal{I} \cie \mathcal{T}$ is a localizing $\otimes$-ideal and $\mathcal{A}$ is a collection of objects of $\mathcal{K}$. Then there is an equality of localizing submodules of $\mathcal{K}$
\begin{displaymath}
\mathcal{I}*\mathcal{A} = \langle X*A \; \vert \; X\in \mathcal{I}, A\in \mathcal{A} \rangle.
\end{displaymath}
\end{lem}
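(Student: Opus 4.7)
The plan is to show the non-trivial containment $\mathcal{I}*\mathcal{A} \supseteq \langle X*A \mid X\in \mathcal{I}, A\in \mathcal{A}\rangle$ is actually an equality by verifying that the ordinary localizing subcategory on the right is already closed under the action of $\mathcal{T}$, so no additional objects are acquired when one forms the submodule closure.

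First, I would record the easy inclusion: by definition $\mathcal{I}*\mathcal{A}$ is a localizing subcategory containing every $X*A$ with $X\in \mathcal{I}$ and $A\in \mathcal{A}$, so it contains $\langle X*A \mid X\in \mathcal{I}, A\in \mathcal{A}\rangle$.

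For the other direction, set $\mathcal{B} = \{X*A \mid X\in \mathcal{I},\; A\in \mathcal{A}\}$. The key observation is that $\mathcal{B}$ is stable under the action of $\mathcal{T}$: for any $Y\in \mathcal{T}$ and any generator $X*A \in \mathcal{B}$, the associator gives
\begin{displaymath}
Y*(X*A) \iso (Y\otimes X)*A,
\end{displaymath}
and since $\mathcal{I}$ is a $\otimes$-ideal, $Y\otimes X\in \mathcal{I}$, so $(Y\otimes X)*A$ again lies in $\mathcal{B}$. Hence Lemma \ref{lem_general_closure2} applies to $\mathcal{B}$ and shows that $\langle \mathcal{B}\rangle$ is a localizing $\mathcal{T}$-submodule of $\mathcal{K}$. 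Since $\mathcal{I}*\mathcal{A}$ is by definition the smallest localizing submodule containing $\mathcal{B}$, we conclude $\mathcal{I}*\mathcal{A}\cie \langle \mathcal{B}\rangle$, completing the proof.

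There is no real obstacle here; the only mild subtlety is making sure one does not implicitly use that $\mathcal{I}$ is closed under the action of $\mathcal{T}$ on $\mathcal{K}$ (which is not the hypothesis) — all we use is that $\mathcal{I}$ is a $\otimes$-ideal of $\mathcal{T}$, which together with the coherence isomorphism $a_{Y,X,A}$ is exactly what lets us rewrite $Y*(X*A)$ as an object of the form required by $\mathcal{B}$.
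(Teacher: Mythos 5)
Your proof is correct and matches the paper's argument exactly: both observe that the collection $\{X*A \mid X\in \mathcal{I}, A\in \mathcal{A}\}$ is closed under the $\mathcal{T}$-action because $\mathcal{I}$ is a $\otimes$-ideal (via the associator), then invoke Lemma~\ref{lem_general_closure2} to conclude that the ordinary localizing subcategory it generates is already a submodule. The paper's proof is more terse but identical in content.
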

\begin{proof}
As $\mathcal{I}$ is a $\otimes$-ideal the collection of objects
\begin{displaymath}
\{X*A \; \vert \; X\in \mathcal{I}, A\in \mathcal{A}\}
\end{displaymath}
is, by associativity, closed under the action of $\mathcal{T}$. So by the last lemma the subcategory $\langle X*A \; \vert \; X\in \mathcal{I}, A\in \mathcal{A} \rangle$ is a localizing submodule from which the claimed equality is immediate.
\end{proof}

\begin{lem}\label{lem_loc_commutes}
Formation of localizing subcategories commutes with the action, i.e., given a set of objects $\mathcal{X}$ of $\mathcal{T}$ and a set of objects $\mathcal{A}$ of $\mathcal{K}$
\begin{displaymath}
\langle Y*B \; \vert \; Y \in \langle \mathcal{X} \rangle, B \in \langle \mathcal{A} \rangle \rangle = \langle X*A\; \vert \; X\in \mathcal{X}, A\in \mathcal{A} \rangle.
\end{displaymath}
\end{lem}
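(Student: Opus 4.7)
The plan is to prove the two inclusions separately, and the nontrivial direction is a standard ``argue one variable at a time'' using the general closure lemma. Let me abbreviate $L = \langle X*A \; \vert \; X\in \mathcal{X}, A\in \mathcal{A}\rangle$ and $R = \langle Y*B \; \vert \; Y\in \langle\mathcal{X}\rangle, B\in \langle\mathcal{A}\rangle\rangle$; the goal is $L = R$.

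The inclusion $L\cie R$ is immediate since every generator of $L$ is a generator of $R$ (using $\mathcal{X}\cie\langle\mathcal{X}\rangle$ and $\mathcal{A}\cie\langle\mathcal{A}\rangle$). For the reverse inclusion I would work on one factor at a time. First, fix $A\in \mathcal{A}$ and apply Lemma \ref{lem_general_closure} to the single exact, coproduct preserving functor $(-)*A\colon \mathcal{T}\to \mathcal{K}$ with target localizing subcategory $L$; since $\mathcal{X}$ maps into $L$ by construction, the conclusion gives $Y*A\in L$ for every $Y\in \langle\mathcal{X}\rangle$.

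Next, fix such a $Y\in \langle\mathcal{X}\rangle$ and apply Lemma \ref{lem_general_closure} a second time, now to the exact, coproduct preserving functor $Y*(-)\colon \mathcal{K}\to \mathcal{K}$ and the same target $L$. By the previous step $\mathcal{A}$ maps into $L$, so the lemma yields $Y*B\in L$ for every $B\in \langle\mathcal{A}\rangle$. This shows that each generator of $R$ lies in $L$, and since $L$ is localizing we obtain $R\cie L$.

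There is essentially no obstacle; the only care needed is to remember that the two factors of the action are biexact and coproduct preserving (Definition \ref{defn_action}(4)), which is exactly what licenses each application of Lemma \ref{lem_general_closure}. No associator or unitor data is required, and the argument does not use that $\mathcal{X}$ or $\mathcal{A}$ have any closure properties beyond being sets of objects.
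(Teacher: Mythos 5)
Your proof is correct and follows essentially the same strategy as the paper: the forward inclusion is immediate from the definitions, and the reverse inclusion is obtained by two successive applications of Lemma \ref{lem_general_closure}, first in the $\mathcal{T}$ variable and then in the $\mathcal{K}$ variable. The paper phrases this by explicitly naming the intermediate localizing subcategories $\mathcal{T}_A$ and $\{B\in\mathcal{K}\mid Y*B\in\mathcal{M}\ \forall Y\in\langle\mathcal{X}\rangle\}$, but the underlying argument is identical.
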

\begin{proof}
Denote the category on the left by $\mathcal{L}$ and the one on the right by $\mathcal{M}$. It is clear $\mathcal{M} \cie \mathcal{L}$. For the converse it is sufficient to check that $\mathcal{M}$ contains generators for $\mathcal{L}$. For each $A\in \mathcal{A}$ define a subcategory
\begin{displaymath}
\mathcal{T}_A = \{Y \in \mathcal{T} \; \vert \; Y*A \in \mathcal{M} \}.
\end{displaymath}
The subcategory $\mathcal{T}_A$ is localizing by Lemma \ref{lem_general_closure}. As, by definition, $X*A$ is in $\mathcal{M}$ for all $X \in \mathcal{X}$ we see each such $X$ lies in $\mathcal{T}_A$. So for any $Y$ in $\langle \mathcal{X} \rangle$ we have $Y$ in $\mathcal{T}_A$. In particular, $Y*A$ lies in $\mathcal{M}$ for each such $Y$ and all $A\in \mathcal{A}$.

Now consider the subcategory
\begin{displaymath}
\{B \in \mathcal{K} \; \vert \; Y * B \in \mathcal{M} \;\; \text{for all}\;\; Y \in \langle \mathcal{X}\rangle\}.
\end{displaymath}
It is localizing and by what we have just seen it contains $\mathcal{A}$. Thus it contains $\langle \mathcal{A} \rangle$ so for every $Y$ in $\langle \mathcal{X}\rangle$ and every $B$ in $\langle \mathcal{A}\rangle$ we have $Y*B$ in $\mathcal{M}$. Hence $\mathcal{M}$ contains generators for $\mathcal{L}$ which gives the equality $\mathcal{L} = \mathcal{M}$.
\end{proof}

Using this result we can prove an analogue for submodules.

\begin{lem}\label{lem_loc_commutes2}
Formation of localizing $\mathcal{T}$-submodules commutes with the action i.e., given a collection of objects $\mathcal{X}$ of $\mathcal{T}$ and a collection of objects $\mathcal{A}$ of $\mathcal{K}$ we have
\begin{align*}
\langle \mathcal{X}\rangle_{\otimes} * \langle \mathcal{A}\rangle &= \langle \mathcal{X}\rangle* \langle \mathcal{A}\rangle \\
&= \mathcal{X} * \mathcal{A}\\
&= \langle Z*(X*A) \; \vert \; Z\in \mathcal{T}, X\in \mathcal{X}, A\in \mathcal{A} \rangle.
\end{align*}
\end{lem}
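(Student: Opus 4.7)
The plan is to chain together three equalities, each handled by a small structural argument using Lemmas \ref{lem_general_closure}--\ref{lem_loc_commutes}.

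For $\mathcal{X}*\mathcal{A} = \langle Z*(X*A) \; \vert \; Z\in\mathcal{T}, X\in\mathcal{X}, A\in\mathcal{A}\rangle$, I would argue as follows. The generating collection on the right is stable under the action of $\mathcal{T}$ because by the associator $Z'*(Z*(X*A)) \iso (Z'\otimes Z)*(X*A)$, which is again in the collection. So by Lemma \ref{lem_general_closure2} the localizing subcategory on the right is already a submodule. Via the unitor it contains every $X*A \iso \mathbf{1}*(X*A)$, hence contains the submodule $\mathcal{X}*\mathcal{A}$. The reverse inclusion is immediate: $\mathcal{X}*\mathcal{A}$ is $\mathcal{T}$-stable, so it contains every $Z*(X*A)$. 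Next, $\langle\mathcal{X}\rangle*\langle\mathcal{A}\rangle = \mathcal{X}*\mathcal{A}$ follows from Lemma \ref{lem_loc_commutes}, which gives equality of the underlying localizing subcategories generated on each side; since forming the smallest localizing submodule containing a set of objects depends only on the localizing subcategory those objects generate, the submodule closures coincide.

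The final equality $\langle\mathcal{X}\rangle_\otimes * \langle\mathcal{A}\rangle = \langle\mathcal{X}\rangle*\langle\mathcal{A}\rangle$ requires slightly more work. The inclusion $\supseteq$ is trivial as $\langle\mathcal{X}\rangle\cie\langle\mathcal{X}\rangle_\otimes$. For $\cie$, introduce
$$\mathcal{L} = \{Y\in\mathcal{T} \; \vert \; Y*B \in \mathcal{X}*\mathcal{A} \text{ for all } B\in\langle\mathcal{A}\rangle\}.$$
Applying Lemma \ref{lem_general_closure} to the family of functors $\{(-)*B\}_{B\in\langle\mathcal{A}\rangle}$ shows $\mathcal{L}$ is localizing in $\mathcal{T}$. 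A second application, to the family $\{X*(-)\}$ for fixed $X\in\mathcal{X}$, shows that $\{B\in\mathcal{K} \; \vert \; X*B\in\mathcal{X}*\mathcal{A}\}$ is localizing and contains $\mathcal{A}$, hence contains $\langle\mathcal{A}\rangle$; so $\mathcal{X}\cie\mathcal{L}$. Associativity upgrades $\mathcal{L}$ to a $\otimes$-ideal, since for $Y\in\mathcal{L}$, $Z\in\mathcal{T}$ and $B\in\langle\mathcal{A}\rangle$ one has $(Z\otimes Y)*B \iso Z*(Y*B) \in \mathcal{X}*\mathcal{A}$ by $\mathcal{T}$-stability of $\mathcal{X}*\mathcal{A}$. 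Therefore $\langle\mathcal{X}\rangle_\otimes\cie\mathcal{L}$, which is exactly the desired inclusion.

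The main obstacle is largely notational: distinguishing plain localizing closure from submodule closure, and choosing the right family of functors when invoking Lemma \ref{lem_general_closure}. The structural input is just repeated use of the associator, the unitor, and the universal properties of the two kinds of closure.
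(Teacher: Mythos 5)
Your proof is correct, but it is organized differently from the paper's. The paper's argument leans heavily on Lemma~\ref{lem_alreadysubmod} (if $\mathcal{I}$ is a localizing $\otimes$-ideal then $\mathcal{I}*\mathcal{A}=\langle X*A\;\vert\;X\in\mathcal{I},A\in\mathcal{A}\rangle$ with plain localizing brackets): it first rewrites $\langle\mathcal{X}\rangle_\otimes=\mathcal{T}\otimes\langle\mathcal{X}\rangle=\langle Z\otimes X\;\vert\;Z\in\mathcal{T},X\in\mathcal{X}\rangle$ using that lemma and Lemma~\ref{lem_loc_commutes}, then substitutes this explicit generating set into $\langle\mathcal{X}\rangle_\otimes*\langle\mathcal{A}\rangle$ and unwinds. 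You never invoke Lemma~\ref{lem_alreadysubmod}. Instead you prove $\mathcal{X}*\mathcal{A}=\langle Z*(X*A)\rangle$ directly by checking the right-hand collection of generators is $\mathcal{T}$-stable (so Lemma~\ref{lem_general_closure2} applies), get $\langle\mathcal{X}\rangle*\langle\mathcal{A}\rangle=\mathcal{X}*\mathcal{A}$ from Lemma~\ref{lem_loc_commutes} by noting that submodule closure only depends on the localizing subcategory generated, and for the remaining inclusion you construct a subcategory $\mathcal{L}\cie\mathcal{T}$ of objects acting into $\mathcal{X}*\mathcal{A}$ and verify it is a localizing $\otimes$-ideal containing $\mathcal{X}$. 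That last device is in effect a local re-derivation of what Lemma~\ref{lem_alreadysubmod} packages; the trade is that your proof is a bit longer and more hands-on, but fully self-contained modulo Lemmas~\ref{lem_general_closure}--\ref{lem_loc_commutes}, whereas the paper's is shorter because it reuses the intermediate Lemma~\ref{lem_alreadysubmod}. One step you compress: having shown $\langle\mathcal{X}\rangle_\otimes\cie\mathcal{L}$, you should still observe that this puts every generator $Y*B$ of $\langle\mathcal{X}\rangle_\otimes*\langle\mathcal{A}\rangle$ inside the localizing submodule $\mathcal{X}*\mathcal{A}$, whence the inclusion of submodules follows; writing this out would be cleaner than calling $\langle\mathcal{X}\rangle_\otimes\cie\mathcal{L}$ itself ``exactly the desired inclusion.''
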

\begin{proof}
The containment $\langle \mathcal{X}\rangle_{\otimes} * \langle \mathcal{A}\rangle \supseteq \langle \mathcal{X}\rangle* \langle \mathcal{A}\rangle$ is clear. On the other hand we know 
\begin{displaymath}
\langle \mathcal{X}\rangle_\otimes = \mathcal{T}\otimes \langle \mathcal{X}\rangle = \langle Z\otimes Y\; \vert \; Z\in \mathcal{T}, Y\in \langle \mathcal{X}\rangle\rangle = \langle Z\otimes X\; \vert \; Z\in \mathcal{T}, X\in \mathcal{X}\rangle.
\end{displaymath}
the second equality by Lemma \ref{lem_alreadysubmod} and the third by the last lemma. Using this we see that
\begin{align*}
\langle \mathcal{X} \rangle_\otimes * \langle \mathcal{A} \rangle &= \langle Y*B \; \vert \; Y\in \langle Z\otimes X\; \vert \; Z\in \mathcal{T}, X\in \mathcal{X}\rangle, B\in \langle \mathcal{A} \rangle\rangle \\
&= \langle Z*( X*A) \; \vert \; Z\in \mathcal{T}, X\in \mathcal{X}, A\in \mathcal{A}\rangle \\ &\cie \langle \mathcal{X}\rangle *\langle \mathcal{A}\rangle
\end{align*}
the first equality again by Lemma \ref{lem_alreadysubmod} and the second from the last lemma and associativity of the action. This proves the first and third equalities in the statement of the lemma.

The second follows from Lemma \ref{lem_loc_commutes} as it identifies the smallest localizing subcategories containing generators (as submodules) for the submodules in question and hence the smallest submodules containing these generating sets.
\end{proof}

We record here the following trivial observation which turns out to be quite useful.

\begin{lem}\label{lem_gen_locpreserving}
If $\mathcal{T}$ is generated as a localizing subcategory by the tensor unit $\mathbf{1}$ then every localizing subcategory of $\mathcal{K}$ is a $\mathcal{T}$-submodule.
\end{lem}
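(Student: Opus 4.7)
The plan is to fix a localizing subcategory $\mathcal{L} \subseteq \mathcal{K}$ and show that for each $A \in \mathcal{L}$, every $X \in \mathcal{T}$ satisfies $X * A \in \mathcal{L}$. For this I would introduce, for fixed $A$, the full subcategory
\begin{displaymath}
\mathcal{T}_A = \{X \in \mathcal{T} \; \vert \; X*A \in \mathcal{L}\}
\end{displaymath}
and apply Lemma \ref{lem_general_closure} to the single coproduct-preserving exact functor $(-)*A \colon \mathcal{T} \to \mathcal{K}$ (exact and coproduct-preserving by condition (4) of Definition \ref{defn_action}) together with the localizing subcategory $\mathcal{L} \subseteq \mathcal{K}$, concluding that $\mathcal{T}_A$ is a localizing subcategory of $\mathcal{T}$.

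Next I would verify that $\mathbf{1} \in \mathcal{T}_A$: this is immediate from the unitor $l_A \colon \mathbf{1}*A \stackrel{\sim}{\to} A$, which places $\mathbf{1}*A$ in $\mathcal{L}$ since $A$ is. By the hypothesis that $\mathcal{T} = \langle \mathbf{1} \rangle$ as a localizing subcategory, any localizing subcategory of $\mathcal{T}$ containing $\mathbf{1}$ must equal all of $\mathcal{T}$, hence $\mathcal{T}_A = \mathcal{T}$. This shows $X*A \in \mathcal{L}$ for every $X \in \mathcal{T}$. Since $A \in \mathcal{L}$ was arbitrary, the action $\mathcal{T} \times \mathcal{L} \stackrel{*}{\to} \mathcal{K}$ factors through $\mathcal{L}$, so $\mathcal{L}$ is a $\mathcal{T}$-submodule.

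There is no real obstacle here: the argument is a direct application of the closure principle Lemma \ref{lem_general_closure} combined with the defining property of the unit. The only thing to be careful about is invoking the correct axioms of the action (biexactness and compatibility with coproducts in the $\mathcal{T}$-variable, plus the unitor), all of which are part of Definition \ref{defn_action}.
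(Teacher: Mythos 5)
Your proof is correct and is essentially the paper's argument unpacked by one level: the paper simply invokes Lemma \ref{lem_general_closure2} (the second assertion, with $\mathcal{X} = \{\mathbf{1}\}$), whose proof in turn appeals to Lemma \ref{lem_general_closure} exactly as you do, with the unitor supplying the base case $\mathbf{1} \in \mathcal{T}_A$.
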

\begin{proof}
This is immediate from Lemma \ref{lem_general_closure2}.
\end{proof}

\section[Rigidly-compactly generated categories]{The case of rigidly-compactly generated tensor triangulated categories}\label{sec_rcg}
We now restrict ourselves to the case that $(\mathcal{T},\otimes,\mathbf{1})$ is a rigidly-compactly generated tensor triangulated category (unless explicitly mentioned otherwise) acting on a compactly generated $\mathcal{K}$. Actions of such categories have desirable properties and we can extend much of the machinery developed in  \cite{BaRickard}, \cite{BIK}, and \cite{BIKstrat2} to this setting.  First let us make explicit our hypotheses on $\mathcal{T}$.

\begin{defn}
A \emph{rigidly-compactly generated tensor triangulated category} is a compactly generated tensor triangulated category (as usual the monoidal structure is assumed to be symmetric, biexact, and preserve coproducts so that $\mathcal{T}$ has an internal hom by Brown representability which we denote by $\hom(-,-)$) such that $\mathcal{T}^{\mathrm{c}}$, the (essentially small) subcategory of compact objects, is a rigid tensor triangulated subcategory. We recall that $\mathcal{T}^c$ is a rigid tensor triangulated subcategory if the monoidal structure and internal hom restrict to $\mathcal{T}^c$ (in particular the unit object $\mathbf{1}$ must be compact), and for all $x$ and $y$ in $\mathcal{T}^c$, setting $x^{\vee} = \hom(x,\mathbf{1})$, the natural map
\begin{displaymath}
x^{\vee}\otimes y \to \hom(x,y)
\end{displaymath}
is an isomorphism. In particular such categories are almost unital algebraic stable homotopy categories in the sense of \cite{HPS} Definition 1.1.4 (we do not assume the strong compatibility conditions between the tensor, internal hom, and triangles, but in order to also define a notion of cosupport for actions following \cite{BIKcosupp} such conditions are likely desirable).
\end{defn}

In the case that $\mathcal{T}$ is rigidly-compactly generated we can use $\Spc \mathcal{T}^{c}$, as defined in \cite{BaSpec}, in order to define a theory of supports by using the localizing $\otimes$-ideals of $\mathcal{T}$ generated by objects of $\mathcal{T}^c$ as in \cite{BaRickard}. 
 
Our first task is to show that if such a $\mathcal{T}$ acts on a compactly generated triangulated category $\mathcal{K}$ we can obtain, from Rickard idempotents on $\mathcal{T}$, localization sequences on $\mathcal{K}$ where the category of acyclic objects is compactly generated by compact objects of $\mathcal{K}$. 

\begin{convention}
Throughout this section all submodules will be localizing unless explicitly mentioned otherwise.
\end{convention}

We now prove that from a Thomason subset of $\Spc \mathcal{T}^c$ we can produce a pair of compactly generated subcategories of $\mathcal{K}$. We do this via a series of relatively straightforward lemmas.

\begin{lem}\label{lem_action_transfer1}
Suppose $\mathcal{V} \cie \Spc \mathcal{T}^c$ is a Thomason subset. Then the subcategory
\begin{displaymath}
\mathit{\Gamma}_\mathcal{V}\mathcal{K} := \{A \in \mathcal{K} \;  \vert \; \exists A' \; \text{with} \; A\iso \mathit{\Gamma}_\mathcal{V}\mathbf{1}*A' \}
\end{displaymath}
is a localizing $\mathcal{T}$-submodule.
\end{lem}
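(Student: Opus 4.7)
The plan is to give two equivalent characterizations of $\mathit{\Gamma}_\mathcal{V}\mathcal{K}$ so that membership becomes a closed condition under the relevant operations. The key facts I will use are that $\mathit{\Gamma}_\mathcal{V}\mathbf{1}$ and $L_\mathcal{V}\mathbf{1}$ are $\otimes$-idempotent and $\otimes$-orthogonal (so $\mathit{\Gamma}_\mathcal{V}\mathbf{1}\otimes\mathit{\Gamma}_\mathcal{V}\mathbf{1}\iso\mathit{\Gamma}_\mathcal{V}\mathbf{1}$ and $\mathit{\Gamma}_\mathcal{V}\mathbf{1}\otimes L_\mathcal{V}\mathbf{1}\iso 0$), fit into a triangle $\mathit{\Gamma}_\mathcal{V}\mathbf{1}\to \mathbf{1}\to L_\mathcal{V}\mathbf{1}\to$, and that the action is biexact, associative, unital, and preserves coproducts in each variable.

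First, I would apply the action to the triangle $\mathit{\Gamma}_\mathcal{V}\mathbf{1}\to\mathbf{1}\to L_\mathcal{V}\mathbf{1}\to$ and use the unitor $l$ to produce, for each $A\in\mathcal{K}$, a functorial triangle
\begin{displaymath}
\mathit{\Gamma}_\mathcal{V}\mathbf{1}*A \to A \to L_\mathcal{V}\mathbf{1}*A \to \S\mathit{\Gamma}_\mathcal{V}\mathbf{1}*A.
\end{displaymath}
Then I would prove the equivalence of the following conditions on $A\in\mathcal{K}$: (a) $A\iso \mathit{\Gamma}_\mathcal{V}\mathbf{1}*A'$ for some $A'$; (b) the canonical map $\mathit{\Gamma}_\mathcal{V}\mathbf{1}*A \to A$ is an isomorphism; (c) $L_\mathcal{V}\mathbf{1}*A = 0$. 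The implication (a)$\Rightarrow$(b) uses associativity of $*$ and idempotency: $\mathit{\Gamma}_\mathcal{V}\mathbf{1}*A \iso \mathit{\Gamma}_\mathcal{V}\mathbf{1}*(\mathit{\Gamma}_\mathcal{V}\mathbf{1}*A') \iso (\mathit{\Gamma}_\mathcal{V}\mathbf{1}\otimes\mathit{\Gamma}_\mathcal{V}\mathbf{1})*A' \iso \mathit{\Gamma}_\mathcal{V}\mathbf{1}*A' \iso A$; (b)$\Rightarrow$(c) follows from the triangle above and $\otimes$-orthogonality applied similarly; (c)$\Rightarrow$(a) follows from the same triangle, which shows that $A\iso\mathit{\Gamma}_\mathcal{V}\mathbf{1}*A$ whenever $L_\mathcal{V}\mathbf{1}*A=0$.

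With characterization (c) in hand, the proof essentially writes itself. The functor $L_\mathcal{V}\mathbf{1}*(-)\colon\mathcal{K}\to\mathcal{K}$ is exact and coproduct-preserving by the axioms of an action, so
\begin{displaymath}
\mathit{\Gamma}_\mathcal{V}\mathcal{K} \;=\; \ker\bigl(L_\mathcal{V}\mathbf{1}*(-)\bigr)
\end{displaymath}
is a localizing subcategory of $\mathcal{K}$. To see that it is a $\mathcal{T}$-submodule, take $A\in\mathit{\Gamma}_\mathcal{V}\mathcal{K}$ and $X\in\mathcal{T}$; then using associativity and the symmetry of $\otimes$,
\begin{displaymath}
L_\mathcal{V}\mathbf{1}*(X*A) \iso (L_\mathcal{V}\mathbf{1}\otimes X)*A \iso (X\otimes L_\mathcal{V}\mathbf{1})*A \iso X*(L_\mathcal{V}\mathbf{1}*A) = 0,
\end{displaymath}
so $X*A\in\mathit{\Gamma}_\mathcal{V}\mathcal{K}$, completing the proof.

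There is no real obstacle here; the only subtle point is the equivalence of the three characterizations, and in particular carefully using the idempotent and orthogonality properties of $\mathit{\Gamma}_\mathcal{V}\mathbf{1}$ and $L_\mathcal{V}\mathbf{1}$ through the action rather than directly in $\mathcal{T}$. Once that translation is made, the statement reduces to the well-known observation that the kernel of an exact, coproduct-preserving functor is localizing, together with a one-line check of $\mathcal{T}$-stability.
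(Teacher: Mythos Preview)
Your proof is correct and follows essentially the same route as the paper: identify $\mathit{\Gamma}_\mathcal{V}\mathcal{K}$ with $\ker(L_\mathcal{V}\mathbf{1}*(-))$ via the localization triangle and $\otimes$-orthogonality, conclude it is localizing as the kernel of an exact coproduct-preserving functor, and then check $\mathcal{T}$-stability. The only cosmetic difference is that the paper verifies $\mathcal{T}$-stability using the original description (a) rather than the kernel description (c), and does not bother isolating your condition (b).
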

\begin{proof}
We begin by showing $\mathit{\Gamma}_\mathcal{V}\mathcal{K}$ is localizing. It is sufficient to show that
\begin{displaymath}
\mathit{\Gamma}_\mathcal{V}\mathcal{K} = \ker L_\mathcal{V}\mathbf{1}*(-),
\end{displaymath}
as the kernel of any exact coproduct preserving functor is a localizing subcategory. By \cite{BaRickard} Theorem 3.5 the subcategory $\mathit{\Gamma}_\mathcal{V}\mathcal{T}$ of $\mathcal{T}$ is precisely the essential image, $\im (\mathit{\Gamma}_\mathcal{V}\mathbf{1}\otimes (-))$, of tensoring with $\mathit{\Gamma}_\mathcal{V}\mathbf{1}$ and the corresponding idempotents are tensor orthogonal i.e., $\mathit{\Gamma}_\mathcal{V}\mathbf{1}\otimes L_\mathcal{V}\mathbf{1} = 0$. So if $A$ is in $\mathit{\Gamma}_\mathcal{V}\mathcal{K}$ then
\begin{align*}
L_\mathcal{V}\mathbf{1}* A &\iso L_\mathcal{V}\mathbf{1}*(\mathit{\Gamma}_\mathcal{V}\mathbf{1}*A') \\
&\iso (L_\mathcal{V}\mathbf{1}\otimes \mathit{\Gamma}_\mathcal{V}\mathbf{1})*A' \\
&\iso 0
\end{align*}
showing
\begin{displaymath}
\mathit{\Gamma}_\mathcal{V}\mathcal{K} \cie \ker L_\mathcal{V}\mathbf{1}*(-).
\end{displaymath}
Conversely, suppose $L_\mathcal{V}\mathbf{1}*A = 0$. Then applying $(-)*A$ to the localization triangle
\begin{displaymath}
\mathit{\Gamma}_\mathcal{V}\mathbf{1} \to \mathbf{1} \to L_\mathcal{V}\mathbf{1} \to \S \mathit{\Gamma}_\mathcal{V}\mathbf{1}
\end{displaymath}
in $\mathcal{T}$ we deduce an isomorphism $\mathit{\Gamma}_\mathcal{V}\mathbf{1}*A \stackrel{\sim}{\to} A$. Thus $A$ is in $\mathit{\Gamma}_\mathcal{V}\mathcal{K}$ so  the two subcategories of $\mathcal{K}$ in question are equal as claimed. As stated above this proves $\mathit{\Gamma}_\mathcal{V}\mathcal{K}$ is localizing.

To see it is a submodule note that for $X$ in $\mathcal{T}$ and $A$ in $\mathit{\Gamma}_\mathcal{V}\mathcal{K}$ we have
\begin{align*}
X*A &\iso X*(\mathit{\Gamma}_\mathcal{V}\mathbf{1}*A') \\
&\iso (X\otimes \mathit{\Gamma}_\mathcal{V}\mathbf{1})*A' \\
&\iso (\mathit{\Gamma}_\mathcal{V}\mathbf{1} \otimes X)*A' \\
&\iso \mathit{\Gamma}_\mathcal{V}\mathbf{1} *(X*A').
\end{align*}
\end{proof}

\begin{lem}\label{lem_action_transfer}
Suppose $\mathcal{V}$ is a Thomason subset of $\Spc \mathcal{T}^c$. The subcategory $\mathit{\Gamma}_\mathcal{V}\mathcal{K}$ and the subcategory
\begin{displaymath}
L_\mathcal{V}\mathcal{K} := \{A \in \mathcal{K} \; \vert \; \exists A' \; \text{with} \; A\iso L_\mathcal{V}\mathbf{1}*A' \}
\end{displaymath}
give rise to a localization sequence
\begin{displaymath}
\xymatrix{
\mathit{\Gamma}_\mathcal{V}\mathcal{K} \ar[r]<0.5ex> \ar@{<-}[r]<-0.5ex> & \mathcal{K} \ar[r]<0.5ex> \ar@{<-}[r]<-0.5ex> & L_\mathcal{V}\mathcal{K}
}
\end{displaymath}
i.e, the top row consists of a fully faithful inclusion and the Verdier quotient by its image and both of these functors have right adjoints, and $L_\mathcal{V}\mathcal{K}$ is also a localizing $\mathcal{T}$-submodule.
\end{lem}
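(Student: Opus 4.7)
The plan is to first establish $L_\mathcal{V}\mathcal{K}$ as a localizing $\mathcal{T}$-submodule by mirroring Lemma \ref{lem_action_transfer1}, then construct the localization sequence from the localization triangle on $\mathcal{T}$.

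First I would show $L_\mathcal{V}\mathcal{K} = \ker (\mathit{\Gamma}_\mathcal{V}\mathbf{1}*(-))$, exactly dually to Lemma \ref{lem_action_transfer1}. The inclusion $\cie$ follows from the tensor-orthogonality $\mathit{\Gamma}_\mathcal{V}\mathbf{1}\otimes L_\mathcal{V}\mathbf{1}=0$ together with associativity of $*$. Conversely, if $\mathit{\Gamma}_\mathcal{V}\mathbf{1}*A = 0$, then applying $(-)*A$ to the triangle $\mathit{\Gamma}_\mathcal{V}\mathbf{1}\to\mathbf{1}\to L_\mathcal{V}\mathbf{1}$ and invoking the unitor $l_A$ forces $A \iso L_\mathcal{V}\mathbf{1}*A$, so $A\in L_\mathcal{V}\mathcal{K}$. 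Since $\mathit{\Gamma}_\mathcal{V}\mathbf{1}*(-)$ is exact and coproduct-preserving, its kernel is localizing. The same symmetry calculation as at the end of Lemma \ref{lem_action_transfer1} (using $X\otimes L_\mathcal{V}\mathbf{1}\iso L_\mathcal{V}\mathbf{1}\otimes X$) shows $L_\mathcal{V}\mathcal{K}$ is closed under $\mathcal{T}$.

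Next, for each $A\in\mathcal{K}$, applying $(-)*A$ to the localization triangle in $\mathcal{T}$ produces a functorial exact triangle
\begin{displaymath}
\mathit{\Gamma}_\mathcal{V}\mathbf{1}*A \to A \to L_\mathcal{V}\mathbf{1}*A \to \S \mathit{\Gamma}_\mathcal{V}\mathbf{1}*A
\end{displaymath}
whose outer terms lie in $\mathit{\Gamma}_\mathcal{V}\mathcal{K}$ and $L_\mathcal{V}\mathcal{K}$ respectively. Write $\alpha\colon \mathit{\Gamma}_\mathcal{V}\mathbf{1}*(-)\to \Ident$ and $\beta\colon \Ident\to L_\mathcal{V}\mathbf{1}*(-)$ for the induced natural transformations. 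The crucial observation is Hom-vanishing: for $X\in\mathit{\Gamma}_\mathcal{V}\mathcal{K}$ and $Y\in L_\mathcal{V}\mathcal{K}$, any $f\colon X\to Y$ must vanish. Indeed, naturality of $\alpha$ gives $f\circ\alpha_X = \alpha_Y\circ(\mathit{\Gamma}_\mathcal{V}\mathbf{1}*f)$; the right-hand side vanishes since $\mathit{\Gamma}_\mathcal{V}\mathbf{1}*Y = 0$, while $\alpha_X$ is an isomorphism: writing $X\iso \mathit{\Gamma}_\mathcal{V}\mathbf{1}*X'$ and applying $\mathit{\Gamma}_\mathcal{V}\mathbf{1}*(-)$ to the above triangle for $X'$, idempotence $\mathit{\Gamma}_\mathcal{V}\mathbf{1}\otimes \mathit{\Gamma}_\mathcal{V}\mathbf{1}\iso \mathit{\Gamma}_\mathcal{V}\mathbf{1}$ forces $\alpha_X$ to be invertible.

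With the functorial triangle and the Hom-vanishing in hand, the standard Bousfield localization formalism supplies everything. Concretely, applying $\Hom_\mathcal{K}(X,-)$ to the triangle for $B$ and using $\Hom(X, L_\mathcal{V}\mathbf{1}*B) = 0 = \Hom(X, \S^{-1}L_\mathcal{V}\mathbf{1}*B)$ for $X\in\mathit{\Gamma}_\mathcal{V}\mathcal{K}$ shows the inclusion $\mathit{\Gamma}_\mathcal{V}\mathcal{K}\hookrightarrow\mathcal{K}$ has right adjoint $\mathit{\Gamma}_\mathcal{V}\mathbf{1}*(-)$; dually, Hom-vanishing identifies $L_\mathcal{V}\mathbf{1}*(-)\colon \mathcal{K}\to L_\mathcal{V}\mathcal{K}$ with the Verdier quotient $\mathcal{K}\to\mathcal{K}/\mathit{\Gamma}_\mathcal{V}\mathcal{K}$ and exhibits the inclusion $L_\mathcal{V}\mathcal{K}\hookrightarrow\mathcal{K}$ as its right adjoint. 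The real work is concentrated in the Hom-vanishing step; the remaining verifications are essentially formal manipulations of idempotents once the localization triangle for objects of $\mathcal{K}$ is available.
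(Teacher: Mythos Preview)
Your proposal is correct and follows essentially the same route as the paper. Both arguments establish that $L_\mathcal{V}\mathcal{K}$ is a localizing submodule by the dual of Lemma~\ref{lem_action_transfer1}, produce the functorial triangle by applying $(-)*A$ to $\mathit{\Gamma}_\mathcal{V}\mathbf{1}\to\mathbf{1}\to L_\mathcal{V}\mathbf{1}$, and prove the Hom-vanishing $\Hom(\mathit{\Gamma}_\mathcal{V}\mathcal{K},L_\mathcal{V}\mathcal{K})=0$; the only cosmetic difference is that the paper explicitly verifies both inclusions $L_\mathcal{V}\mathcal{K}=\mathit{\Gamma}_\mathcal{V}\mathcal{K}^\perp$ and then invokes \cite{BondalReps} Lemma~3.1, whereas you prove the inclusion $L_\mathcal{V}\mathcal{K}\subseteq\mathit{\Gamma}_\mathcal{V}\mathcal{K}^\perp$ and absorb the reverse inclusion into the phrase ``standard Bousfield localization formalism.''
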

\begin{proof}
The statement that $L_\mathcal{V}\mathcal{K}$ is a submodule follows in exactly the same way as for $\mathit{\Gamma}_\mathcal{V}\mathcal{K}$ in the proof of Lemma \ref{lem_action_transfer1}.

So let us demonstrate we have the claimed localization sequence. By definition there is a triangle in $\mathcal{T}$
\begin{displaymath}
\mathit{\Gamma}_\mathcal{V}\mathbf{1} \to \mathbf{1} \to L_\mathcal{V}\mathbf{1} \to \S\mathit{\Gamma}_\mathcal{V}\mathbf{1}
\end{displaymath}
associated to $\mathcal{V}$. For any $A$ in $\mathcal{K}$ the action thus gives us functorial triangles
\begin{displaymath}
\mathit{\Gamma}_\mathcal{V}\mathbf{1}*A \to A \to L_\mathcal{V}\mathbf{1}*A \to \S\mathit{\Gamma}_\mathcal{V}\mathbf{1}*A.
\end{displaymath}
So to prove we have the desired localization sequence it is sufficient to demonstrate 
\begin{displaymath}
L_\mathcal{V}\mathcal{K} = \mathit{\Gamma}_\mathcal{V}\mathcal{K}^{\perp}
\end{displaymath}
by Lemma 3.1 of \cite{BondalReps}. 

We first show $L_\mathcal{V}\mathcal{K} \supseteq \mathit{\Gamma}_\mathcal{V}\mathcal{K}^{\perp}$. Suppose $A\in \mathit{\Gamma}_\mathcal{V}\mathcal{K}^\perp$ and consider the triangle
\begin{displaymath}
\mathit{\Gamma}_\mathcal{V}\mathbf{1}*A \to A \to L_\mathcal{V}\mathbf{1}*A \to \S\mathit{\Gamma}_\mathcal{V}\mathbf{1}*A.
\end{displaymath}
By hypothesis the morphism $\mathit{\Gamma}_\mathcal{V}\mathbf{1}*A \to A$ must be zero so the triangle splits yielding
\begin{displaymath}
L_\mathcal{V}\mathbf{1}*A \iso A \oplus \S \mathit{\Gamma}_\mathcal{V}\mathbf{1}*A.
\end{displaymath}
As $L_\mathcal{V}\mathcal{K}$ is localizing, and thus thick, it must contain $\mathit{\Gamma}_\mathcal{V}\mathbf{1}*A$ i.e., there is some $A'$ in $\mathcal{K}$ such that $\mathit{\Gamma}_\mathcal{V}\mathbf{1}*A \iso L_\mathcal{V}\mathbf{1}*A'$. Hence there are isomorphisms
\begin{align}\label{blargh}
\mathit{\Gamma}_\mathcal{V}\mathbf{1}*A \iso \mathit{\Gamma}_\mathcal{V}\mathbf{1}*(\mathit{\Gamma}_\mathcal{V}\mathbf{1}*A) &\iso \mathit{\Gamma}_\mathcal{V}\mathbf{1}* (L_\mathcal{V}\mathbf{1}*A') \\
&\iso (\mathit{\Gamma}_\mathcal{V}\mathbf{1} \otimes L_\mathcal{V}\mathbf{1})*A' \iso 0
\end{align}
where we have used tensor orthogonality of the Rickard idempotents. Thus $L_\mathcal{V}\mathbf{1}*A \iso A$ is in $L_\mathcal{V}\mathcal{K}$.

It remains to check the containment $L_\mathcal{V}\mathcal{K} \cie \mathit{\Gamma}_\mathcal{V}\mathcal{K}^\perp$. Let $A$ be an object of $\mathit{\Gamma}_\mathcal{V}\mathcal{K}$ and $B$ an object of $L_\mathcal{V}\mathcal{K}$. Observe that as $A$ is in $\mathit{\Gamma}_\mathcal{V}\mathcal{K}$ and $B$ is in $L_\mathcal{V}\mathcal{K}$ we have $L_\mathcal{V}\mathbf{1}*A\iso 0$ and $\mathit{\Gamma}_\mathcal{V}\mathbf{1}*B \iso 0$. Indeed, by symmetry of the monoidal structure on $\mathcal{T}$ the objects $L_\mathcal{V}\mathbf{1}*A$ and $\mathit{\Gamma}_\mathcal{V}\mathbf{1}*B$ lie in both $\mathit{\Gamma}_\mathcal{V}\mathcal{K}$ and $L_\mathcal{V}\mathcal{K}$. It follows they must vanish by orthogonality of the tensor idempotents $\mathit{\Gamma}_\mathcal{V}\mathbf{1}$ and $L_\mathcal{V}\mathbf{1}$ as in (1) and (2) above. So for $f\in \Hom(A,B)$ we obtain via functoriality a map of triangles
\begin{displaymath}
\xymatrix{
\mathit{\Gamma}_\mathcal{V}\mathbf{1}*A \ar[r]^(0.6){\sim} \ar[d] & A \ar[r] \ar[d]^f & 0 \ar[d]\\
0 \ar[r] & B \ar[r]_(0.4){\sim} & L_\mathcal{V}\mathbf{1}*B
}
\end{displaymath}
which shows $f=0$. Hence
\begin{displaymath}
L_\mathcal{V}\mathcal{K} \cie \mathit{\Gamma}_\mathcal{V}\mathcal{K}^{\perp}
\end{displaymath}
proving the equality of these two subcategories. As stated above this yields the desired localization sequence by Lemma 3.1 of \cite{BondalReps}.

\end{proof}

\begin{notation}
We will be somewhat slack with notation and often write, for $A$ in $\mathcal{K}$, $\mathit{\Gamma}_\mathcal{V}A$ rather than $\mathit{\Gamma}_\mathcal{V}\mathbf{1}*A$ when it is clear from the context what we mean. When working with objects $X$ of $\mathcal{T}$ we will use the idempotent notation for the localization and acyclization functors, e.g.\ $\mathit{\Gamma}_\mathcal{V}\mathbf{1}\otimes X$, so no confusion should be possible.
\end{notation}

The next lemma is the first of several results showing rigidly-compactly generated tensor triangulated categories are not just lovely categories in their own right, but they also act well on other compactly generated categories.

\begin{lem}\label{lem_rigid_goodaction}
Suppose $\mathcal{T}\times \mathcal{K} \stackrel{*}{\to} \mathcal{K}$ is an action where $\mathcal{T}$ is rigidly-compactly generated and $\mathcal{K}$ is compactly generated. Then the action restricts to an action at the level of compact objects $\mathcal{T}^c\times \mathcal{K}^c \stackrel{*}{\to} \mathcal{K}^c$.
\end{lem}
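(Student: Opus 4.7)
My plan is to establish the lemma by exhibiting, for each compact $x \in \mathcal{T}^c$, a coproduct-preserving right adjoint to the functor $x*(-)\colon \mathcal{K} \to \mathcal{K}$. Once this adjoint is produced, the conclusion is immediate: a triangulated functor between compactly generated categories preserves compact objects precisely when its right adjoint preserves coproducts, so compactness of $A \in \mathcal{K}^c$ and compactness of $x \in \mathcal{T}^c$ together force $x*A$ to be compact in $\mathcal{K}$.

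The candidate right adjoint is $x^\vee * (-)$, where $x^\vee = \hom(x,\mathbf{1})$ is the dual guaranteed by rigidity of $\mathcal{T}^c$. To build the adjunction I would transfer the duality data of $x$ in $\mathcal{T}$ to $\mathcal{K}$ through the structure isomorphisms of the action. Explicitly, the coevaluation $\eta\colon \mathbf{1} \to x^\vee \otimes x$ and evaluation $\epsilon\colon x \otimes x^\vee \to \mathbf{1}$ in $\mathcal{T}^c$ give, via the associator $a$ and unitor $l$, natural maps
\begin{displaymath}
\widetilde{\eta}_A\colon A \xrightarrow{l_A^{-1}} \mathbf{1}*A \xrightarrow{\eta*A} (x^\vee\otimes x)*A \xrightarrow{a^{-1}} x^\vee*(x*A),
\end{displaymath}
\begin{displaymath}
\widetilde{\epsilon}_B\colon x*(x^\vee*B) \xrightarrow{a} (x\otimes x^\vee)*B \xrightarrow{\epsilon*B} \mathbf{1}*B \xrightarrow{l_B} B,
\end{displaymath}
which are the proposed unit and counit of the adjunction $x*(-) \dashv x^\vee*(-)$.

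The key verification is the pair of triangle identities for $\widetilde{\eta}$ and $\widetilde{\epsilon}$. These reduce, via the pentagon and unit axioms (1) and (2) of Definition \ref{defn_action}, to the triangle identities for the duality $(x, x^\vee, \eta, \epsilon)$ in $\mathcal{T}^c$, which hold by rigidity. This is the only real content of the argument and is the step I expect to be the main obstacle: it is not deep, but it requires a careful diagram chase juggling associators and unitors and using that the action is compatible with $\otimes$ via the coherence axioms. (It is the action-theoretic counterpart of the standard fact that $x\otimes(-)$ has right adjoint $x^\vee\otimes(-)$ in a rigid tensor triangulated category.)

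With the adjunction established, note that $x^\vee*(-)$ preserves arbitrary coproducts by axiom (4) of Definition \ref{defn_action}. Hence for any set-indexed family $\{B_i\}$ in $\mathcal{K}$ and any $A \in \mathcal{K}^c$,
\begin{displaymath}
\Hom_\mathcal{K}(x*A, \coprod_i B_i) \cong \Hom_\mathcal{K}(A, x^\vee*\coprod_i B_i) \cong \Hom_\mathcal{K}(A, \coprod_i x^\vee*B_i) \cong \coprod_i \Hom_\mathcal{K}(x*A, B_i),
\end{displaymath}
where the last isomorphism uses compactness of $A$ and the adjunction in reverse. Thus $x*A$ is compact, which is exactly the claim that $*$ restricts to $\mathcal{T}^c \times \mathcal{K}^c \to \mathcal{K}^c$.
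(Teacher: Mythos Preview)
Your proposal is correct and follows essentially the same route as the paper's proof: both construct the adjunction $x*(-) \dashv x^\vee*(-)$ by transporting the duality data of $x$ through the associator and unitor of the action, reduce the triangle identities to those in $\mathcal{T}^c$ via the coherence axioms, and then conclude compactness from the fact that the right adjoint preserves coproducts. The only cosmetic differences are that the paper invokes \cite{NeeGrot} Theorem~5.1 for the final step whereas you spell out the hom-computation directly, and that your associator arrows are written in the direction inverse to the paper's convention (with $a$ as defined there, the map $(x^\vee\otimes x)*A \to x^\vee*(x*A)$ is $a$, not $a^{-1}$); neither affects the substance.
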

\begin{proof}
Let $t$ be a compact object of $\mathcal{T}$. As $\mathcal{T}^c$ is rigid the object $t$ admits a strong dual i.e., there is an object $t^{\vee}$ together with morphisms
\begin{displaymath}
\eta_t \colon \mathbf{1} \to t^{\vee} \otimes t \quad \text{and} \quad \e_t \colon t\otimes t^\vee \to \mathbf{1}
\end{displaymath}
such that the composite
\begin{displaymath}
\xymatrix{
t \ar[r]^{\r_t^{-1}} & t\otimes \mathbf{1} \ar[r]^(0.4){t\otimes \eta_t} & t\otimes (t^\vee \otimes t) \ar[r]^{\a} & (t\otimes t^\vee)\otimes t \ar[r]^(0.6){\e_t \otimes t} & \mathbf{1}\otimes t \ar[r]^(0.6){\l_t} & t
}
\end{displaymath}
where $\r_t$, $\l_t$, and $\a$ are the right and left unitors and the associator for $\mathcal{T}$, is the identity and similarly for $t^\vee$ . Using these maps together with the unitor $l$ and associator $a$ for the action we define natural transformations
\begin{displaymath}
\eta_t' \colon \xymatrix{\id_\mathcal{K} \ar[r]^{l^{-1}}& \mathbf{1}* \ar[r]^(0.4){\eta_t*} & (t^\vee \otimes t)* \ar[r] & t^\vee*t*}
\end{displaymath}
and
\begin{displaymath}
\e_t'\colon \xymatrix{t*t^\vee * \ar[r] & (t\otimes t^\vee)* \ar[r]^(0.6){\e_t*} & \mathbf{1}* \ar[r]^{l} & \id_\mathcal{K}}
\end{displaymath}
which we claim are the unit and counit of an adjunction between $t*$ and $t^\vee *$. In order to prove this it is sufficient to verify that the composites
\begin{displaymath}
\xymatrix{t* \ar[r]^(0.3){t*\eta_t'} & t*t^\vee*t* \ar[r]^(0.7){\e_t' t*} & t*} \quad \text{and} \quad \xymatrix{t^\vee * \ar[r]^(0.4){\eta_t' t^\vee *} & t^\vee*t*t^\vee \ar[r]^(0.6){t^\vee*\e_t'} & t^\vee*}
\end{displaymath}
are the respective identity natural transformations (see for instance \cite{MacLane} IV.1 Theorem 2). In fact these are precisely the identity composites corresponding to the existence of strong duals in $\mathcal{T}$ applied to $\mathcal{K}$. This is easily checked using the compatibility conditions required for $\mathcal{T}$ to act on $\mathcal{K}$.

Thus $\eta_t'$ and $\e_t'$ give the desired adjunction. In particular, $t*$ has a coproduct preserving right adjoint and so by \cite{NeeGrot} Theorem 5.1 it must send compact objects to compact objects.
\end{proof}

\begin{rem}\label{rem_adjoints}
It is worth noting that we proved more than we stated: for each $t\in \mathcal{T}^c$ the functor $t*$ has a right adjoint given via acting by another compact object namely, $t^\vee *$.
\end{rem}

Of course there are other situations in which this is true, although one has to assume more.

\begin{lem}
Let $\mathcal{T}$ be a (not necessarily rigidly) compactly generated tensor triangulated category acting on a compactly generated triangulated category $\mathcal{K}$. If there exists a set of compact generators $\{x_i\}_{i\in I}$ for $\mathcal{T}$ such that $x_i*\mathcal{K}^c \cie \mathcal{K}^c$ for each $i\in I$ then the action of $\mathcal{T}$ on $\mathcal{K}$ restricts to an action of $\mathcal{T}^c$ on $\mathcal{K}^c$. In particular, if the unit object $\mathbf{1}$ of $\mathcal{T}$ is compact and generates $\mathcal{T}$ the action restricts.
\end{lem}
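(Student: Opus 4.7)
The plan is to fix a compact object and successively promote the ``compact-preserving'' property from the given generators to all of $\mathcal{T}^c$ via a thickness argument, exactly the standard trick from Lemma \ref{lem_general_closure}.

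More precisely, fix $k \in \mathcal{K}^c$ and consider
\begin{displaymath}
\mathcal{T}_k = \{X \in \mathcal{T} \; \vert \; X * k \in \mathcal{K}^c\}.
\end{displaymath}
Since $(-)*k \colon \mathcal{T} \to \mathcal{K}$ is exact (by biexactness of the action) and $\mathcal{K}^c$ is a thick subcategory of $\mathcal{K}$, the preimage $\mathcal{T}_k$ is a thick subcategory of $\mathcal{T}$. By hypothesis $x_i * k \in x_i * \mathcal{K}^c \cie \mathcal{K}^c$ for every $i \in I$, so $\{x_i\}_{i\in I} \cie \mathcal{T}_k$. Now invoke Neeman's characterisation of compact objects in a compactly generated triangulated category: the compact objects $\mathcal{T}^c$ coincide with the thick closure of any set of compact generators. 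Hence $\mathcal{T}^c = \thick(x_i \; \vert \; i \in I) \cie \mathcal{T}_k$, which says $t * k \in \mathcal{K}^c$ for all $t \in \mathcal{T}^c$ and all $k \in \mathcal{K}^c$. Since $k$ was arbitrary, the biexact functor $\mathcal{T} \times \mathcal{K} \stackrel{*}{\to} \mathcal{K}$ restricts to a biexact functor $\mathcal{T}^c \times \mathcal{K}^c \to \mathcal{K}^c$; the associator and unitor restrict along this subfunctor and the pentagon, triangle, and suspension compatibility diagrams of Definition \ref{defn_action} hold automatically as they already held in the ambient categories. The only axiom of Definition \ref{defn_action} that does not make sense for the essentially small categories $\mathcal{T}^c$ and $\mathcal{K}^c$ is condition (4) on arbitrary coproducts, which is beside the point.

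For the final sentence, suppose $\mathbf{1}$ is compact and generates $\mathcal{T}$ as a localizing subcategory. Take $I = \{\star\}$ with $x_\star = \mathbf{1}$; then $\mathbf{1} * k \iso k$ via the unitor $l_k$ for every $k \in \mathcal{K}^c$, so the hypothesis $x_\star * \mathcal{K}^c \cie \mathcal{K}^c$ is trivially satisfied, and the conclusion follows from the general statement.

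The only step with any real content is the appeal to Neeman's theorem (i.e.\ that $\mathcal{T}^c$ is the thick closure of any generating set of compact objects); everything else is formal manipulation of thickness and the structural axioms of the action, and I would expect no obstacle there.
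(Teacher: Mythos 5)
Your proof is correct and is exactly the ``standard argument'' the paper invokes without spelling out: the paper's proof says only that the result ``follows from the obvious analogue of Lemma \ref{lem_general_closure} for thick subcategories'', and your argument---fixing $k\in\mathcal{K}^c$, observing that $\mathcal{T}_k = \{X \mid X*k \in \mathcal{K}^c\}$ is thick because it is the preimage of the thick subcategory $\mathcal{K}^c$ under the exact functor $(-)*k$, and then using Neeman's theorem that $\mathcal{T}^c = \thick(x_i \mid i\in I)$---is precisely that argument made explicit.
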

\begin{proof}
The argument is standard - it follows from the obvious analogue of Lemma \ref{lem_general_closure} for thick subcategories.
\end{proof}

In such a situation the general results on generators we proved in Section \ref{sec_actions} allows us to produce localizing submodules of $\mathcal{K}$ generated by objects of $\mathcal{K}^c$. In particular it implies the subcategories of the form $\mathit{\Gamma}_\mathcal{V}\mathcal{K}$ for $\mathcal{V}$ a Thomason subset of $\Spc \mathcal{T}^c$ are generated by compact objects of $\mathcal{K}$.

\begin{prop}\label{prop_action_generation}
Suppose $\mathcal{T}$ acts on $\mathcal{K}$, with both $\mathcal{T}$ and $\mathcal{K}$ compactly generated, in such a way that the action restricts to one of $\mathcal{T}^c$ on $\mathcal{K}^c$ (e.g., $\mathcal{T}$ is rigidly-compactly generated). Then given a $\otimes$-ideal $\mathcal{L}\cie \mathcal{T}$ generated, as a localizing subcategory, by compact objects of $\mathcal{T}$ and a localizing subcategory $\mathcal{M}\cie \mathcal{K}$ generated by objects of $\mathcal{K}^c$ the subcategory $\mathcal{L}*\mathcal{M}$ is also generated, as a localizing subcategory, by compact objects of $\mathcal{K}$.
\end{prop}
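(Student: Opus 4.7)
Here is how I would attack the proposition. Write $\mathcal{L} = \langle \mathcal{X} \rangle_\otimes$ for some set $\mathcal{X} \subseteq \mathcal{T}^c$ of compact generators of $\mathcal{L}$ (as a localizing $\otimes$-ideal), and $\mathcal{M} = \langle \mathcal{A} \rangle$ for some set $\mathcal{A} \subseteq \mathcal{K}^c$ of compact generators. By the third equality of Lemma \ref{lem_loc_commutes2},
\begin{displaymath}
\mathcal{L} * \mathcal{M} \;=\; \langle Z*(x*a) \;\vert\; Z\in \mathcal{T},\, x\in \mathcal{X},\, a\in \mathcal{A}\rangle
\end{displaymath}
as a localizing subcategory of $\mathcal{K}$. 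The plan is to replace the quantifier ``$Z \in \mathcal{T}$'' by ``$z \in \mathcal{T}^c$'' and to observe that the resulting generators are compact in $\mathcal{K}$.

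For the compactness observation, given $z \in \mathcal{T}^c$, $x \in \mathcal{X} \subseteq \mathcal{T}^c$, and $a \in \mathcal{A} \subseteq \mathcal{K}^c$, the associator yields $z * (x*a) \iso (z\otimes x)*a$. Since $\mathcal{T}^c$ is a tensor subcategory of $\mathcal{T}$, $z \otimes x \in \mathcal{T}^c$; and since by hypothesis the action restricts to $\mathcal{T}^c \times \mathcal{K}^c \to \mathcal{K}^c$, the object $(z\otimes x)*a$ lies in $\mathcal{K}^c$. Let $\mathcal{N}$ be the localizing subcategory of $\mathcal{K}$ generated by the collection of such objects; by construction $\mathcal{N} \cie \mathcal{L}*\mathcal{M}$ and is generated by compact objects of $\mathcal{K}$, so the remaining task is the reverse inclusion $\mathcal{L}*\mathcal{M} \cie \mathcal{N}$.

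To establish this inclusion I would apply Lemma \ref{lem_general_closure} in the following way. For each pair $(x,a) \in \mathcal{X}\times \mathcal{A}$, the functor $(-)*(x*a)\colon \mathcal{T} \to \mathcal{K}$ is exact and coproduct preserving (biexactness together with Definition \ref{defn_action}(4)). Hence the full subcategory
\begin{displaymath}
\mathcal{T}' \;=\; \{Z\in \mathcal{T} \;\vert\; Z*(x*a)\in \mathcal{N} \text{ for all } x\in \mathcal{X},\, a\in \mathcal{A}\}
\end{displaymath}
is localizing in $\mathcal{T}$ by Lemma \ref{lem_general_closure}. By the compactness observation above, $\mathcal{T}^c \cie \mathcal{T}'$. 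Since $\mathcal{T}$ is compactly generated, $\mathcal{T}' = \mathcal{T}$, and therefore every generator $Z*(x*a)$ of $\mathcal{L}*\mathcal{M}$ already lies in $\mathcal{N}$, giving $\mathcal{L}*\mathcal{M} \cie \mathcal{N}$ and completing the argument.

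The only mild subtlety — and the step I expect to be the main obstacle if one tried to do it naively — is not to confuse the submodule description of $\mathcal{L}*\mathcal{M}$ with a plain localizing-subcategory description: the equality $\mathcal{L}*\mathcal{M} = \langle x*a\;\vert\; x\in\mathcal{X},a\in\mathcal{A}\rangle_*$ only gives submodule generators, and one really must feed the $Z$-reduction of the third equality of Lemma \ref{lem_loc_commutes2} into Lemma \ref{lem_general_closure} to upgrade to a localizing generating set consisting of compact objects of $\mathcal{K}$.
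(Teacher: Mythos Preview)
Your proof is correct and follows essentially the same route as the paper: both reduce $\mathcal{L}*\mathcal{M}$ via Lemma \ref{lem_loc_commutes2} to $\langle Z*(x*a)\;\vert\;Z\in\mathcal{T},\,x\in\mathcal{X},\,a\in\mathcal{A}\rangle$ and then replace $Z\in\mathcal{T}$ by compact objects. The only cosmetic difference is that the paper packages the $Z$-reduction as a citation of Lemma \ref{lem_loc_commutes} (applied with a set $\{c_\lambda\}$ of compact generators of $\mathcal{T}$), whereas you unwind that step to a direct appeal to Lemma \ref{lem_general_closure}; the underlying argument is identical.
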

\begin{proof}
Let us fix generating sets $\{c_\l\}_{\l \in \Lambda}$ for $\mathcal{T}$, $\{x_i\}_{i\in I}$ for $\mathcal{L}$, and $\{a_j\}_{j\in J}$ for $\mathcal{M}$  where the $c_\l$ and $x_i$ lie in $\mathcal{T}^c$ and the $a_j$ lie in $\mathcal{K}^c$. We have equalities of submodules
\begin{align*}
\mathcal{L}*\mathcal{M} = \langle x_i \; \vert \; i\in I\rangle * \langle a_j \; \vert \; j\in J\rangle = \langle Z*(x_i*a_j) \; \vert \; Z\in \mathcal{T}, i\in I, j\in J\rangle
\end{align*}
by Lemma \ref{lem_loc_commutes2}. Since $\mathcal{T} = \langle c_\l \; \vert \; \l\in \Lambda\rangle$  we can use Lemma \ref{lem_loc_commutes} to rewrite this as
\begin{displaymath}
\langle Z*(x_i*a_j) \; \vert \; Z\in \mathcal{T}, i\in I, j\in J\rangle = \langle c_\l*(x_i*a_j) \; \vert \; \l\in \Lambda, i\in I, j\in J\rangle
\end{displaymath}
which completes the proof as by hypothesis the action restricts to compacts.
\end{proof}

\begin{rem}\label{rem_action_generation}
We get more from the proof of this proposition when $\mathcal{T}$ is generated by the tensor unit. In this case all localizing and thick subcategories are submodules for $\mathcal{T}$ and $\mathcal{T}^c$ respectively so we do not need to close under the action. Thus, with this added assumption, we showed that if $\mathcal{L}$ is generated by objects $\{x_i\}_{i\in I}$ of $\mathcal{T}^c$ and $\mathcal{M}$ is generated by objects $\{a_j\}_{j\in J}$ of $\mathcal{K}^c$ then $\mathcal{L}*\mathcal{M}$ has a generating set $\{x_i*a_j\}_{i\in I,j\in J}$ of objects compact in $\mathcal{K}$.
\end{rem}

\begin{cor}\label{cor_transfer_cg}
Suppose $\mathcal{T}$ is a rigidly-compactly generated tensor triangulated category acting on a compactly generated triangulated category $\mathcal{K}$ and that $\mathcal{V}$ is a Thomason subset of $\Spc \mathcal{T}^c$. Then the subcategory
\begin{displaymath}
\mathit{\Gamma}_\mathcal{V}\mathcal{K} = \{A \in \mathcal{K} \; \vert \; \exists A' \; \text{with}\; A\iso \mathit{\Gamma}_\mathcal{V}\mathbf{1}*A' \}
\end{displaymath}
is generated by compact objects of $\mathcal{K}$.
\end{cor}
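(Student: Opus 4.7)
The plan is to identify $\mathit{\Gamma}_\mathcal{V}\mathcal{K}$ with a submodule of the form $\mathcal{L}*\mathcal{M}$ where $\mathcal{L}$ is a $\otimes$-ideal of $\mathcal{T}$ generated by compact objects and $\mathcal{M}$ is a localizing subcategory of $\mathcal{K}$ generated by compact objects, and then invoke Proposition \ref{prop_action_generation}. The natural candidate for $\mathcal{L}$ is $\mathcal{T}_\mathcal{V}$, the smashing localizing $\otimes$-ideal generated by $\mathcal{T}^c_\mathcal{V}$, and for $\mathcal{M}$ we simply take all of $\mathcal{K}$, which is generated by $\mathcal{K}^c$ by hypothesis.

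First I would establish the equality $\mathit{\Gamma}_\mathcal{V}\mathcal{K} = \mathcal{T}_\mathcal{V} * \mathcal{K}$. For the inclusion $\mathit{\Gamma}_\mathcal{V}\mathcal{K} \subseteq \mathcal{T}_\mathcal{V}*\mathcal{K}$ there is nothing to prove: any $A \cong \mathit{\Gamma}_\mathcal{V}\mathbf{1}*A'$ is manifestly of that form since $\mathit{\Gamma}_\mathcal{V}\mathbf{1}\in \mathcal{T}_\mathcal{V}$. For the reverse inclusion, note that $\mathcal{T}_\mathcal{V}$ is exactly the essential image of $\mathit{\Gamma}_\mathcal{V}\mathbf{1}\otimes(-)$ by \cite{BaRickard} Theorem 3.5, so for any $X\in \mathcal{T}_\mathcal{V}$ we have $X \cong \mathit{\Gamma}_\mathcal{V}\mathbf{1}\otimes X$, and hence for any $A\in \mathcal{K}$
\begin{displaymath}
X*A \iso (\mathit{\Gamma}_\mathcal{V}\mathbf{1}\otimes X)*A \iso \mathit{\Gamma}_\mathcal{V}\mathbf{1}*(X*A)
\end{displaymath}
lies in $\mathit{\Gamma}_\mathcal{V}\mathcal{K}$. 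Since $\mathit{\Gamma}_\mathcal{V}\mathcal{K}$ is a localizing submodule by Lemma \ref{lem_action_transfer1}, it contains the submodule $\mathcal{T}_\mathcal{V}*\mathcal{K}$.

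Having identified $\mathit{\Gamma}_\mathcal{V}\mathcal{K}$ with $\mathcal{T}_\mathcal{V}*\mathcal{K}$, I then apply Proposition \ref{prop_action_generation}. Its hypotheses are satisfied: $\mathcal{T}_\mathcal{V}$ is by construction a localizing $\otimes$-ideal of $\mathcal{T}$ generated by the compact objects $\mathcal{T}^c_\mathcal{V}$; $\mathcal{K}$ is generated by $\mathcal{K}^c$; and the action of $\mathcal{T}^c$ restricts to $\mathcal{K}^c$ by Lemma \ref{lem_rigid_goodaction}, since $\mathcal{T}$ is rigidly-compactly generated. The conclusion is precisely that $\mathit{\Gamma}_\mathcal{V}\mathcal{K} = \mathcal{T}_\mathcal{V}*\mathcal{K}$ is generated as a localizing subcategory by compact objects of $\mathcal{K}$; one may read off from Lemma \ref{lem_loc_commutes2} and the proof of Proposition \ref{prop_action_generation} the explicit generating set $\{x*a\mid x\in \mathcal{T}^c_\mathcal{V},\,a\in \mathcal{K}^c\}$.

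There is no real obstacle here, since the hard technical work—producing the submodule $\mathit{\Gamma}_\mathcal{V}\mathcal{K}$, establishing that actions of rigidly-compactly generated categories preserve compactness, and the closure/generation computation of Proposition \ref{prop_action_generation}—has been carried out in the preceding results. The only point worth being careful about is invoking \cite{BaRickard} Theorem 3.5 to realize $X\in \mathcal{T}_\mathcal{V}$ as $\mathit{\Gamma}_\mathcal{V}\mathbf{1}\otimes X$, which is what allows the associator of the action to transport objects of $\mathcal{T}_\mathcal{V}*\mathcal{K}$ into $\mathit{\Gamma}_\mathcal{V}\mathcal{K}$.
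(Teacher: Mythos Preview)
Your proof is correct and follows essentially the same approach as the paper: both identify $\mathit{\Gamma}_\mathcal{V}\mathcal{K}$ with $\mathit{\Gamma}_\mathcal{V}\mathcal{T}*\mathcal{K}$ (your $\mathcal{T}_\mathcal{V}$) via the isomorphism $X\iso \mathit{\Gamma}_\mathcal{V}\mathbf{1}\otimes X$ for $X\in\mathit{\Gamma}_\mathcal{V}\mathcal{T}$ together with Lemma~\ref{lem_action_transfer1}, and then invoke Proposition~\ref{prop_action_generation}. The only cosmetic difference is that the paper carries out the identification by rewriting generators of the submodule, whereas you verify the two containments directly.
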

\begin{proof}
By the proposition we have just proved it is sufficient to make the identification $\mathit{\Gamma}_\mathcal{V}\mathcal{K} = \mathit{\Gamma}_\mathcal{V}\mathcal{T} * \mathcal{K}$. If $X$ is an object of $\mathit{\Gamma}_\mathcal{V}\mathcal{T}$ then there is an isomorphism $X\iso \mathit{\Gamma}_\mathcal{V}\mathbf{1}\otimes X$. Thus we have
\begin{align*}
\mathit{\Gamma}_\mathcal{V}\mathcal{T}*\mathcal{K} &= \langle X*A \; \vert \; X\in \mathit{\Gamma}_\mathcal{V}\mathcal{T}, A\in \mathcal{K}\rangle_* \\
&= \langle \mathit{\Gamma}_\mathcal{V}\mathbf{1}*(X*A) \; \vert \; X\in \mathit{\Gamma}_\mathcal{V}\mathcal{T}, A\in \mathcal{K}\rangle_* \\
&= \langle \mathit{\Gamma}_\mathcal{V}\mathbf{1}*A \; \vert \; A\in \mathcal{K}\rangle_*. \\
\end{align*}
Closing the generators of this last submodule under isomorphisms gives $\mathit{\Gamma}_\mathcal{V}\mathcal{K}$ which, by Lemma \ref{lem_action_transfer1}, is a localizing $\mathcal{T}$-submodule. Thus $\mathit{\Gamma}_\mathcal{V}\mathcal{K} = \mathit{\Gamma}_\mathcal{V}\mathcal{T}*\mathcal{K}$ and we can apply the last proposition to complete the proof.
\end{proof}

\section{Supports via actions}\label{sec_supports}

We now define the functors which give rise to supports on $\mathcal{K}$ relative to an action $(\mathcal{T},*)$. We assume that $\mathcal{K}$ is compactly generated and $\mathcal{T}$ is rigidly-compactly generated.

\begin{defn}\label{defn_gen_ptfunctors}
For every $x \in \Spc \mathcal{T}^c$ we define subsets of the spectrum
\begin{displaymath}
\mathcal{V}(x) = \overline{\{x\}}
\end{displaymath}
and
\begin{displaymath}
\mathcal{Z}(x) = \{y\in \Spc \mathcal{T}^c \; \vert \; x\notin \mathcal{V}(y)\}.
\end{displaymath}
Both of these subsets are specialization closed but $\mathcal{V}(x)$ may fail to be Thomason (the subset $\mathcal{Z}(x)$ is always Thomason by virtue of being the support of the prime ideal giving the point $x$).

\begin{ex}
Let us give an example illustrating the failure of a closed point to be Thomason. Consider the linearly ordered set $\nat \un \{\infty\}$ equipped with the specialization topology i.e., for $x,y\in \nat \un \{\infty\}$ we have $x\in \mathcal{V}(y)$ if and only if $x\geq y$. One sees that the closed subset $\mathcal{V}(\infty) = \{\infty\}$ is not Thomason; the open cover
\begin{displaymath}
\nat = \bigcup_{n\in \nat} \{m\in \nat \; \vert \; m\leq n\}
\end{displaymath}
does not admit a finite refinement. It is worth remarking that this example occurs in nature as the spectrum of the $p$-local stable homotopy category (see \cite{BaSSS} Corollary~9.5).

In fact, it is shown in \cite{BaRickard} Proposition 7.13 that as soon as the spectrum of $\mathcal{T}^c$ is not noetherian there exists a point whose closure is not Thomason.
\end{ex}

\begin{defn}\label{defn_gen_ptfunctors2}
Let $x$ be a point of $\Spc \mathcal{T}^c$. In the case that $\mathcal{V}(x)$ is Thomason we define a $\otimes$-idempotent
\begin{displaymath}
\mathit{\Gamma}_x\mathbf{1}= (\mathit{\Gamma}_{\mathcal{V}(x)}\mathbf{1} \otimes L_{\mathcal{Z}(x)}\mathbf{1}).
\end{displaymath}
\end{defn}

In keeping with previous notation we will sometimes write $\mathit{\Gamma}_xA$ instead of $\mathit{\Gamma}_x\mathbf{1}*A$ for objects $A$ of $\mathcal{K}$. We recall from \cite{BaRickard} Corollary 7.5 that the idempotent functors $\mathit{\Gamma}_x\mathbf{1}\otimes(-)$ on $\mathcal{T}$ for $x\in \Spc \mathcal{T}^c$ only depend on $x$. If one makes a different choice of Thomason subsets $\mathcal{W},\mathcal{V}$ satisfying $\mathcal{V}\setminus \{\mathcal{V}\intersec \mathcal{W}\} = \{x\}$ then $\mathit{\Gamma}_\mathcal{V}\mathbf{1}\otimes L_\mathcal{W}\mathbf{1}$ is naturally isomorphic to $\mathit{\Gamma}_x\mathbf{1}$ (cf.\ Theorem 6.2 of \cite{BIK}). Thus, with $\mathcal{T}$ acting on $\mathcal{K}$, the functors $\mathit{\Gamma}_x\colon \mathcal{K} \to \mathcal{K}$ also only depend on $x$. In other words we have:

\begin{lem}\label{lem_bik6.2_uber}
Let $x\in \Spc \mathcal{T}^c$ and suppose $\mathcal{V}$ and $\mathcal{W}$ are Thomason subsets of $\Spc \mathcal{T}^c$ such that $\mathcal{V} \setminus (\mathcal{V}\intersec \mathcal{W}) = \{x\}$. Then there are natural isomorphisms
\begin{displaymath}
(L_\mathcal{W}\mathbf{1} \otimes \mathit{\Gamma}_\mathcal{V}\mathbf{1})*(-) \iso \mathit{\Gamma}_x \iso (\mathit{\Gamma}_\mathcal{V}\mathbf{1} \otimes L_\mathcal{W}\mathbf{1})*(-).
\end{displaymath}
\end{lem}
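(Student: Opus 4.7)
The plan is to transport the analogous identification of objects already established inside $\mathcal{T}$ through the action. By Corollary~7.5 of \cite{BaRickard}, which the authors invoke in the paragraph immediately preceding the lemma, whenever $\mathcal{V}$ and $\mathcal{W}$ are Thomason subsets with $\mathcal{V}\setminus(\mathcal{V}\intersec\mathcal{W})=\{x\}$ one has natural isomorphisms in $\mathcal{T}$
\[
L_\mathcal{W}\mathbf{1}\otimes\mathit{\Gamma}_\mathcal{V}\mathbf{1}\iso\mathit{\Gamma}_x\mathbf{1}\iso\mathit{\Gamma}_\mathcal{V}\mathbf{1}\otimes L_\mathcal{W}\mathbf{1},
\]
the two outer objects being canonically identified via the symmetry of $\otimes$. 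Thus the entire content of the lemma at the level of $\mathcal{T}$ is already known, and the only task is to promote this to an identification of functors $\mathcal{K}\to\mathcal{K}$.

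To do this, I would fix chosen isomorphisms $\varphi$ and $\psi$ realizing the two displayed identifications above in $\mathcal{T}$ and apply the action in the first variable. Since $(-)*A\colon\mathcal{T}\to\mathcal{K}$ is a functor for each fixed $A\in\mathcal{K}$, the morphisms $\varphi$ and $\psi$ are sent to isomorphisms $\varphi*A$ and $\psi*A$ in $\mathcal{K}$, yielding at once the required chain
\[
(L_\mathcal{W}\mathbf{1}\otimes\mathit{\Gamma}_\mathcal{V}\mathbf{1})*A\iso\mathit{\Gamma}_x\mathbf{1}*A\iso(\mathit{\Gamma}_\mathcal{V}\mathbf{1}\otimes L_\mathcal{W}\mathbf{1})*A.
\]
Naturality in $A$ is immediate: given $f\colon A\to A'$ in $\mathcal{K}$ the square expressing compatibility of $\varphi*(-)$ with $(-)*f$ commutes by the identity $(\varphi*1_{A'})(1*f)=(1*f)(\varphi*1_A)$, which is just the functoriality of $*\colon\mathcal{T}\times\mathcal{K}\to\mathcal{K}$ as recorded in the remark following Definition~\ref{defn_action}.

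There is no genuine obstacle. Because the isomorphism $L_\mathcal{W}\mathbf{1}\otimes\mathit{\Gamma}_\mathcal{V}\mathbf{1}\iso\mathit{\Gamma}_x\mathbf{1}$ lives entirely inside $\mathcal{T}$ and we only apply a single instance of $(-)*A$, none of the coherence axioms of Definition~\ref{defn_action} beyond the basic bifunctoriality of $*$ are invoked; in particular no associators or unitors need be tracked. The lemma is therefore a direct consequence of its tensor-triangular analogue in $\mathcal{T}$ combined with the functoriality of the action, and I expect the write-up to be extremely short.
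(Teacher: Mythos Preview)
Your proposal is correct and matches the paper's approach exactly: the paper does not give a separate proof of this lemma but rather states it as a direct consequence of the paragraph preceding it, which invokes \cite{BaRickard} Corollary~7.5 for the isomorphism in $\mathcal{T}$ and then observes that the functors $\mathit{\Gamma}_x$ on $\mathcal{K}$ inherit this independence via the action. Your write-up simply spells out the (trivial) transport along $(-)*A$ and the naturality via bifunctoriality, which is precisely what the paper leaves implicit.
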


If such sets exist for $x\in \Spc \mathcal{T}^c$ let us follow the terminology of \cite{BaRickard} and call $x$ \emph{visible}. By \cite{BaRickard} Corollary 7.14 every point is visible in our sense if the spectrum of $\mathcal{T}^c$ is noetherian. We denote by $\Vis\mathcal{T}^c$ the subspace of visible points of $\mathcal{T}$.
\end{defn}

\begin{notation}
Following previous notation we use $\mathit{\Gamma}_x\mathcal{K}$, for $x\in \Spc \mathcal{T}^c$, to denote the essential image of $\mathit{\Gamma}_x\mathbf{1}*(-)$. It is a $\mathcal{T}$-submodule as for any $X \in \mathcal{T}$ and $A\in \mathit{\Gamma}_x \mathcal{K}$
\begin{align*}
X*A \iso X*(\mathit{\Gamma}_x\mathbf{1}*A') \iso \mathit{\Gamma}_x\mathbf{1}*(X*A')
\end{align*}
for some $A'\in \mathcal{K}$.
\end{notation}

We can define supports taking values in the set of visible points of $\Spc \mathcal{T}^c$.
\begin{defn}
Given $A$ in $\mathcal{K}$ we define the support of $A$ to be the set
\begin{displaymath}
\supp_{(\mathcal{T},*)} A = \{x\in \Vis \mathcal{T}^c \; \vert \; \mathit{\Gamma}_x A \neq 0\}.
\end{displaymath}
When the action in question is clear we will omit the subscript from the notation.
\end{defn}

The following proposition, which gives the basic properties of the support assignment, already appears in \cite{BaRickard} (more precisely see Propositions 7.17 and 7.18) in the case $\mathcal{T}$ acts on itself. However, we include a proof of (4) both for completeness and to reinforce that one only uses formal properties of the Rickard idempotents.

\begin{prop}\label{prop_abs_supp_prop}
The support assignment $\supp_{(\mathcal{T},*)}$ satisfies the following properties:
\begin{itemize}
\item[$(1)$] given a triangle
\begin{displaymath}
A \to B \to C \to \S A
\end{displaymath}
in $\mathcal{K}$ we have $\supp B \cie \supp A \un \supp C$;
\item[$(2)$] for any $A$ in $\mathcal{K}$ and $i\in \int$
\begin{displaymath}
\supp A = \supp \S^i A;
\end{displaymath}
\item[$(3)$] given a set-indexed family $\{A_\l\}_{\l \in \Lambda}$ of objects of $\mathcal{K}$ there is an equality
\begin{displaymath}
\supp \coprod_\l A_\l = \Un_\l \supp A_\l;
\end{displaymath}
\item[$(4)$] the support satisfies the separation axiom  i.e., for every specialization closed subset $\mathcal{V} \cie \Vis \mathcal{T}^c$ and every object $A$ of $\mathcal{K}$
\begin{align*}
&\supp \mathit{\Gamma}_\mathcal{V}\mathbf{1}* A = (\supp A) \intersec \mathcal{V} \\
&\supp L_\mathcal{V}\mathbf{1}*A = (\supp A) \intersec (\Vis \mathcal{T}^c \setminus \mathcal{V}).
\end{align*}
\end{itemize}
\end{prop}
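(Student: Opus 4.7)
Parts (1), (2), and (3) are formal consequences of the fact that, for each fixed $x\in\Vis\mathcal{T}^c$, the functor $\Gamma_x\mathbf{1}*(-)\colon\mathcal{K}\to\mathcal{K}$ is exact and coproduct-preserving by Definition \ref{defn_action}. For (1), applying this functor to the triangle $A\to B\to C\to\Sigma A$ yields a triangle $\Gamma_x A\to\Gamma_x B\to\Gamma_x C\to\Sigma\Gamma_x A$, and vanishing of the outer terms (when $x\notin\supp A\cup\supp C$) forces $\Gamma_x B=0$. Item (2) is immediate from the isomorphism $\Gamma_x\Sigma^i A\iso\Sigma^i\Gamma_x A$, while (3) follows from $\Gamma_x\coprod_\lambda A_\lambda\iso\coprod_\lambda\Gamma_x A_\lambda$ together with the fact that a coproduct in a triangulated category is zero exactly when each summand is.

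The substance of the proposition lies in (4). I would prove both equalities simultaneously from the following tensor-level identity, which I claim holds for every visible $x$ and every (Thomason) specialization closed $\mathcal{V}\subseteq\Vis\mathcal{T}^c$:
\begin{displaymath}
\Gamma_x\mathbf{1}\otimes\Gamma_\mathcal{V}\mathbf{1}\iso\begin{cases}\Gamma_x\mathbf{1} & \text{if } x\in\mathcal{V},\\ 0 & \text{if } x\notin\mathcal{V}.\end{cases}
\end{displaymath}
Granting this, associativity of the action supplies $\Gamma_x(\Gamma_\mathcal{V}\mathbf{1}*A)\iso(\Gamma_x\mathbf{1}\otimes\Gamma_\mathcal{V}\mathbf{1})*A$, so the first equality of (4) reads off immediately. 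For the second equality, I would tensor the localization triangle $\Gamma_\mathcal{V}\mathbf{1}\to\mathbf{1}\to L_\mathcal{V}\mathbf{1}$ with $\Gamma_x\mathbf{1}$; the displayed identity forces $\Gamma_x\mathbf{1}\otimes L_\mathcal{V}\mathbf{1}$ to vanish precisely when $x\in\mathcal{V}$ and to be isomorphic to $\Gamma_x\mathbf{1}$ otherwise, and acting on $A$ gives the complementary equality.

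The tensor identity itself is proved inside $\mathcal{T}$, using the standard formula $\Gamma_{\mathcal{U}_1}\mathbf{1}\otimes\Gamma_{\mathcal{U}_2}\mathbf{1}\iso\Gamma_{\mathcal{U}_1\cap\mathcal{U}_2}\mathbf{1}$ for Thomason subsets together with the choice-independence of $\Gamma_x\mathbf{1}$ afforded by Lemma \ref{lem_bik6.2_uber}. When $x\in\mathcal{V}$, specialization closure of $\mathcal{V}$ gives $\mathcal{V}(x)\subseteq\mathcal{V}$, so the $\Gamma$-factors collapse to $\Gamma_{\mathcal{V}(x)}\mathbf{1}$ and the $L_{\mathcal{Z}(x)}$ factor reassembles $\Gamma_x\mathbf{1}$. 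When $x\notin\mathcal{V}$, I would use the $T_0$ property of $\Spc\mathcal{T}^c$: any $y\in\mathcal{V}(x)\cap\mathcal{V}$ lies in $\overline{\{x\}}$ but is distinct from $x$ (which is outside $\mathcal{V}$), hence cannot have $x$ in its closure, so $\mathcal{V}(x)\cap\mathcal{V}\subseteq\mathcal{Z}(x)$; then $L_{\mathcal{Z}(x)}\mathbf{1}$ annihilates the surviving $\Gamma$-piece by $\otimes$-orthogonality of the Rickard idempotents.

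The main obstacle is keeping the idempotent arithmetic clean: picking Thomason witnesses consistent with whether $x$ sits inside or outside $\mathcal{V}$, and tracking which $\Gamma$/$L$ pairs annihilate one another. Everything else in the proposition, including (1)-(3), is a formal consequence of exactness, associativity, and coproduct-preservation of the action.
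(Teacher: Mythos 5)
Your proof is correct and takes essentially the same approach as the paper: parts (1)--(3) are handled identically (exactness, suspension compatibility, and coproduct-preservation of $\mathit{\Gamma}_x\mathbf{1}*(-)$), and for (4) the paper likewise reduces, via associativity, to the tensor-level identity $\mathit{\Gamma}_x\mathbf{1}\otimes\mathit{\Gamma}_\mathcal{V}\mathbf{1}\iso\mathit{\Gamma}_x\mathbf{1}$ when $x\in\mathcal{V}$ and $\iso 0$ when $x\notin\mathcal{V}$, established through \cite{BaRickard} Proposition 3.11 and a choice of Thomason witnesses for the visibility of $x$. The one small bookkeeping difference is that you fix the specific witnesses $\mathcal{V}(x)$ and $\mathcal{Z}(x)$ (invoking the $T_0$ property to get $\mathcal{V}(x)\intersec\mathcal{V}\cie\mathcal{Z}(x)$ when $x\notin\mathcal{V}$), whereas the paper works with an arbitrary pair of Thomason sets $\mathcal{W},\mathcal{Y}$ with $\mathcal{W}\setminus(\mathcal{W}\intersec\mathcal{Y})=\{x\}$ and then observes $\mathcal{W}\intersec\mathcal{V}\cie\mathcal{Y}$ directly; the latter avoids having to assume $\mathcal{V}(x)$ itself is Thomason (which could fail outside the noetherian setting), a subtlety you flag but do not fully resolve.
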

\begin{proof}
As $\mathit{\Gamma}_x\mathbf{1}*(-)$ is a coproduct preserving exact functor (1), (2), and (3) are immediate. To see the separation axiom holds suppose $\mathcal{V}\cie \Vis \mathcal{T}^c$ is a specialization closed subset and let $A$ be an object of $\mathcal{K}$. Then 
\begin{align*}
\mathit{\Gamma}_x\mathbf{1}*(\mathit{\Gamma}_\mathcal{V}\mathbf{1}*A) &\iso (\mathit{\Gamma}_x\mathbf{1} \otimes \mathit{\Gamma}_\mathcal{V}\mathbf{1}) *A \\
&= (\mathit{\Gamma}_\mathcal{W}\mathbf{1} \otimes L_\mathcal{Y}\mathbf{1} \otimes \mathit{\Gamma}_\mathcal{V}\mathbf{1})*A
\end{align*}
where $\mathcal{W}$ and $\mathcal{Y}$ are Thomason subsets such that $\mathcal{W}\setminus (\mathcal{W}\intersec \mathcal{Y}) = \{x\}$. If $x\in \mathcal{V}$ the subsets $\mathcal{W}\intersec \mathcal{V}$ and $\mathcal{Y}$ also satisfy the conditions of Lemma \ref{lem_bik6.2_uber} i.e.,
\begin{displaymath}
(\mathcal{W}\intersec\mathcal{V}) \setminus (\mathcal{W}\intersec\mathcal{V}\intersec\mathcal{Y}) = \{x\}.
\end{displaymath} 
By \cite{BaRickard} Proposition 3.11 $\mathit{\Gamma}_\mathcal{W}\mathbf{1} \otimes \mathit{\Gamma}_\mathcal{V}\mathbf{1} \iso \mathit{\Gamma}_{\mathcal{W}\intersec \mathcal{V}}\mathbf{1}$. So in this case
\begin{displaymath}
\mathit{\Gamma}_x\mathbf{1}*\mathit{\Gamma}_\mathcal{V}\mathbf{1}*A \iso (\mathit{\Gamma}_{\mathcal{W}\intersec \mathcal{V}}\mathbf{1} \otimes L_\mathcal{Y}\mathbf{1}) *A \iso \mathit{\Gamma}_x\mathbf{1}*A.
\end{displaymath}
If $x\notin \mathcal{V}$ then $\mathcal{W}\intersec\mathcal{V}$ is contained in $\mathcal{Y}$. It follows that $\mathit{\Gamma}_{\mathcal{W}\intersec\mathcal{V}}\mathcal{T} \cie \mathit{\Gamma}_\mathcal{Y}\mathcal{T}$ so, using standard facts about acyclization and localization functors e.g.\ \cite{BIK} Lemma 3.4,
\begin{displaymath}
\mathit{\Gamma}_x\mathbf{1}*\mathit{\Gamma}_\mathcal{V}\mathbf{1}*A \iso 0.
\end{displaymath}
This proves $\supp \mathit{\Gamma}_\mathcal{V}\mathbf{1}*A = (\supp A) \intersec \mathcal{V}$. One proves the analogue for $L_\mathcal{V}\mathbf{1}*A$ similarly.
\end{proof}

\begin{cor}\label{cor_supp_point1}
Let $x$ be a visible point of $\Spc \mathcal{T}^c$. Then, for $\mathcal{T}$ acting on itself, $\supp \mathit{\Gamma}_x\mathbf{1} = \{x\}$. We also have that for distinct points $x_1,x_2$ of $\Vis \mathcal{T}^c$ the tensor product $\mathit{\Gamma}_{x_1}\mathbf{1}\otimes \mathit{\Gamma}_{x_2}\mathbf{1}$ vanishes.
\end{cor}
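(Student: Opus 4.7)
The plan is to dispatch the second statement first, since tensor orthogonality of the point-idempotents will then immediately cut the support of $\mathit{\Gamma}_x\mathbf{1}$ down to at most $\{x\}$, and only non-vanishing will remain to be checked.

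For the orthogonality, I would pick Thomason subsets $\mathcal{W}_i, \mathcal{Y}_i$ realizing $\mathit{\Gamma}_{x_i}\mathbf{1} \iso \mathit{\Gamma}_{\mathcal{W}_i}\mathbf{1} \otimes L_{\mathcal{Y}_i}\mathbf{1}$ via Lemma \ref{lem_bik6.2_uber} and rearrange using the symmetry of $\otimes$ together with the standard identities $\mathit{\Gamma}_{\mathcal{A}}\mathbf{1} \otimes \mathit{\Gamma}_{\mathcal{B}}\mathbf{1} \iso \mathit{\Gamma}_{\mathcal{A} \cap \mathcal{B}}\mathbf{1}$ and $L_{\mathcal{A}}\mathbf{1} \otimes L_{\mathcal{B}}\mathbf{1} \iso L_{\mathcal{A} \cup \mathcal{B}}\mathbf{1}$ from \cite{BaRickard} Proposition 3.11 to obtain
\begin{displaymath}
\mathit{\Gamma}_{x_1}\mathbf{1} \otimes \mathit{\Gamma}_{x_2}\mathbf{1} \iso \mathit{\Gamma}_{\mathcal{W}_1 \cap \mathcal{W}_2}\mathbf{1} \otimes L_{\mathcal{Y}_1 \cup \mathcal{Y}_2}\mathbf{1}.
\end{displaymath}
A set-theoretic check gives $\mathcal{W}_1 \cap \mathcal{W}_2 \cie \mathcal{Y}_1 \cup \mathcal{Y}_2$: a point $z$ in the difference would lie in $\mathcal{W}_i \setminus \mathcal{Y}_i$ for $i = 1, 2$, forcing $z = x_1 = x_2$, contrary to $x_1 \neq x_2$. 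This inclusion means $\mathit{\Gamma}_{\mathcal{W}_1 \cap \mathcal{W}_2}\mathcal{T} \cie \mathit{\Gamma}_{\mathcal{Y}_1 \cup \mathcal{Y}_2}\mathcal{T}$, so the standard orthogonality $L_{\mathcal{Y}_1 \cup \mathcal{Y}_2} \circ \mathit{\Gamma}_{\mathcal{W}_1 \cap \mathcal{W}_2} = 0$ finishes the argument.

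For the first statement, since $\mathcal{T}$ is acting on itself, $y \in \supp \mathit{\Gamma}_x\mathbf{1}$ means $\mathit{\Gamma}_y\mathbf{1} \otimes \mathit{\Gamma}_x\mathbf{1} \neq 0$, and the orthogonality just proved forces $y = x$. Because $\mathit{\Gamma}_x\mathbf{1}$ is $\otimes$-idempotent, membership of $x$ in $\supp \mathit{\Gamma}_x\mathbf{1}$ reduces to showing $\mathit{\Gamma}_x\mathbf{1} \neq 0$. I would tensor the Rickard localization triangle $\mathit{\Gamma}_\mathcal{Y}\mathbf{1} \to \mathbf{1} \to L_\mathcal{Y}\mathbf{1}$ with $\mathit{\Gamma}_\mathcal{W}\mathbf{1}$ to obtain a triangle
\begin{displaymath}
\mathit{\Gamma}_{\mathcal{W} \cap \mathcal{Y}}\mathbf{1} \to \mathit{\Gamma}_\mathcal{W}\mathbf{1} \to \mathit{\Gamma}_x\mathbf{1} \to \S \mathit{\Gamma}_{\mathcal{W} \cap \mathcal{Y}}\mathbf{1}
\end{displaymath}
(using $\mathit{\Gamma}_\mathcal{W}\mathbf{1} \otimes \mathit{\Gamma}_\mathcal{Y}\mathbf{1} \iso \mathit{\Gamma}_{\mathcal{W} \cap \mathcal{Y}}\mathbf{1}$). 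Were $\mathit{\Gamma}_x\mathbf{1} = 0$, the first map would be an isomorphism; hence the smashing subcategories $\mathcal{T}_{\mathcal{W} \cap \mathcal{Y}}$ and $\mathcal{T}_\mathcal{W}$ would coincide, and restricting to $\mathcal{T}^c$ and applying Thomason's classification of thick $\otimes$-ideals would give $\mathcal{W} = \mathcal{W} \cap \mathcal{Y}$, contradicting $\mathcal{W} \setminus (\mathcal{W} \cap \mathcal{Y}) = \{x\}$.

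The only non-formal step is the non-vanishing in the final paragraph; the orthogonality and the inclusion $\supp \mathit{\Gamma}_x\mathbf{1} \cie \{x\}$ are direct bookkeeping with the formulae for Rickard idempotents already recalled in the text. The main obstacle, such as it is, lies in recognising that Thomason's bijection between Thomason subsets of $\Spc \mathcal{T}^c$ and thick $\otimes$-ideals of $\mathcal{T}^c$ is the right tool to convert the vanishing of an idempotent into an equality of Thomason subsets.
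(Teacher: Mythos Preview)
Your proof is correct but follows a different route from the paper. The paper establishes the first statement directly from the separation axiom in the preceding proposition: writing $\mathit{\Gamma}_x\mathbf{1} = \mathit{\Gamma}_\mathcal{V}\mathbf{1}\otimes L_\mathcal{W}\mathbf{1}$ and applying part~(4) twice yields $\supp\mathit{\Gamma}_x\mathbf{1} = \mathcal{V}\cap(\Vis\mathcal{T}^c\setminus\mathcal{W})\cap\supp\mathbf{1} = \{x\}$, the last step using $\supp\mathbf{1} = \Vis\mathcal{T}^c$ (implicit from \cite{BaRickard} Proposition~7.17, the agreement of big and small supports on compacts). The second statement is then dispatched by citing \cite{BaRickard} Remark~7.6 and Corollary~7.5. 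You reverse the order: you spell out the orthogonality computation explicitly (in effect reproving the cited remark), use it to bound the support above, and then reduce via idempotence to the non-vanishing of $\mathit{\Gamma}_x\mathbf{1}$, which you obtain by a direct appeal to the classification of thick $\otimes$-ideals. The paper's argument is shorter and exploits the proposition just proved, at the cost of leaning on external references for both halves; yours is more self-contained, and in particular your non-vanishing step gives an elementary alternative to invoking the comparison of supports on compact objects.
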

\begin{proof}
Let $\mathcal{V}$ and $\mathcal{W}$ be Thomason subsets giving rise to $\mathit{\Gamma}_x\mathbf{1}$. Statement (4) of the proposition implies
\begin{align*}
\supp \mathit{\Gamma}_x \mathbf{1} & = \supp (\mathit{\Gamma}_\mathcal{V}\mathbf{1} \otimes (L_\mathcal{W}\mathbf{1} \otimes \mathbf{1})) \\
& = \mathcal{V} \intersec \supp (L_\mathcal{W}\mathbf{1}\otimes \mathbf{1}) \\
&= \mathcal{V} \intersec (\Vis \mathcal{T}^c \setminus \mathcal{W}) \intersec \supp \mathbf{1} \\
&= \mathcal{V} \intersec (\Vis \mathcal{T}^c \setminus \mathcal{W}) \intersec \Vis \mathcal{T}^c \\
&= \{x\}
\end{align*}
which proves the first part of the corollary.

For the second statement recall from \cite{BaRickard} Remark 7.6 that $\mathit{\Gamma}_{x_1}\mathbf{1}\otimes \mathit{\Gamma}_{x_2}\mathbf{1}$ is isomorphic to $\mathit{\Gamma}_\varnothing\mathbf{1}$. Given any Thomason subset $\mathcal{V}$ we have
\begin{displaymath}
\mathit{\Gamma}_\varnothing\mathbf{1} \iso \mathit{\Gamma}_\mathcal{V}\mathbf{1} \otimes L_\mathcal{V}\mathbf{1} \iso 0,
\end{displaymath}
by \cite{BaRickard} Corollary 7.5, which shows the tensor product in question vanishes as claimed.
\end{proof}

Finally we can in this generality define a pair of assignments between visible subsets of $\Spc \mathcal{T}^c$ and localizing submodules of $\mathcal{K}$.

\begin{defn}\label{defn_vis_sigmatau}
We say a subset $W\cie \Spc \mathcal{T}^c$ is \emph{visible} if every $x\in W$ is a visible point or equivalently if $W\cie \Vis \mathcal{T}^c$. There are order preserving assignments
\begin{displaymath}
\left\{ \begin{array}{c}
\text{visible} \\ \text{subsets of}\; \Spc\mathcal{T}^c 
\end{array} \right\}
\xymatrix{ \ar[r]<1ex>^\t \ar@{<-}[r]<-1ex>_\s &} \left\{
\begin{array}{c}
\text{localizing submodules} \; \text{of} \; \mathcal{K} \\
\end{array} \right\} 
\end{displaymath}
where both collections are ordered by inclusion. For a localizing submodule $\mathcal{L}$ we set
\begin{displaymath}
\s(\mathcal{L}) = \supp \mathcal{L} = \{x \in \Vis\mathcal{T}^c \; \vert \; \mathit{\Gamma}_x\mathcal{L} \neq 0\}
\end{displaymath}
and 
\begin{displaymath}
\t(W) = \{A \in \mathcal{K} \; \vert \; \supp A \cie W\}.
\end{displaymath}
Both of these are well defined; this is clear for $\s$ and for $\t$ it follows from Proposition \ref{prop_abs_supp_prop}.
\end{defn}

\section[The local-to-global principle]{Homotopy colimits and the local-to-global principle}\label{ssec_ltg}
Throughout this section we fix an action $\mathcal{T}\times\mathcal{K} \stackrel{*}{\to} \mathcal{K}$ where $\mathcal{T}$ is a rigidly-compactly generated tensor triangulated category and $\mathcal{K}$ is compactly generated. Furthermore, we assume $\Spc \mathcal{T}^c$ is a noetherian topological space so that specialization closed subsets are the same as Thomason subsets and all points are visible. All submodules are again assumed to be localizing.

We begin by generalizing the local-to-global principle of \cite{BIKstrat2}.

\begin{defn}\label{defn_ltg}
We say $\mathcal{T}\times\mathcal{K} \stackrel{*}{\to} \mathcal{K}$ satisfies the \emph{local-to-global principle} if for each $A$ in $\mathcal{K}$
\begin{displaymath}
\langle A \rangle_* = \langle \mathit{\Gamma}_x A \; \vert \; x\in \Spc \mathcal{T}^c\rangle_*.
\end{displaymath}
\end{defn}

The local-to-global principle has the following rather pleasing consequences for the assignments $\s$ and $\t$ of Definition \ref{defn_vis_sigmatau}.

\begin{lem}\label{general_im_tau}
Suppose the local-to-global principle holds for the action of $\mathcal{T}$ on $\mathcal{K}$ and let $W$ be a subset of $\Spc \mathcal{T}^c$. Then
\begin{displaymath}
\t(W) = \langle \mathit{\Gamma}_x\mathcal{K} \; \vert \; x\in W\intersec \s\mathcal{K} \rangle_*.
\end{displaymath}
\end{lem}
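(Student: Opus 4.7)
The plan is to establish the two inclusions separately, using the separation property of Proposition \ref{prop_abs_supp_prop}(4) for one direction and the local-to-global hypothesis for the other. The key preliminary observation is that $\t(W)$ is itself a localizing submodule of $\mathcal{K}$: closure under triangles, suspensions, and coproducts follows from parts (1)--(3) of Proposition \ref{prop_abs_supp_prop}, while closure under the action of $\mathcal{T}$ follows from the associativity isomorphism $\mathit{\Gamma}_x\mathbf{1}*(X*A) \iso X*(\mathit{\Gamma}_x\mathbf{1}*A)$, which implies $\supp(X*A) \cie \supp A$. Consequently, to prove one inclusion it will be enough to exhibit a suitable generating set of $\t(W)$ inside $\langle \mathit{\Gamma}_x\mathcal{K} \; \vert \; x \in W \cap \s\mathcal{K}\rangle_*$.

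For the containment $\langle \mathit{\Gamma}_x\mathcal{K} \; \vert \; x \in W \cap \s\mathcal{K}\rangle_* \cie \t(W)$, I would take any $x \in W \cap \s\mathcal{K}$ and any $B \iso \mathit{\Gamma}_x\mathbf{1}*B'$ in $\mathit{\Gamma}_x\mathcal{K}$. Choosing Thomason subsets $\mathcal{V},\mathcal{W}$ with $\mathcal{V}\setminus(\mathcal{V}\intersec\mathcal{W}) = \{x\}$ and applying the separation axiom twice, one obtains
\begin{displaymath}
\supp B = \supp\bigl(\mathit{\Gamma}_\mathcal{V}\mathbf{1}*L_\mathcal{W}\mathbf{1}*B'\bigr) \cie \mathcal{V}\intersec(\Vis\mathcal{T}^c\setminus\mathcal{W}) = \{x\} \cie W,
\end{displaymath}
so $B \in \t(W)$. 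Since $\t(W)$ is a localizing submodule, the inclusion follows.

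For the reverse containment, fix $A \in \t(W)$. By the local-to-global principle,
\begin{displaymath}
\langle A \rangle_* = \langle \mathit{\Gamma}_x A \; \vert \; x \in \Spc\mathcal{T}^c \rangle_*.
\end{displaymath}
Now if $x \notin \supp A$ then $\mathit{\Gamma}_x A = 0$ by definition of the support, and the hypothesis $A \in \t(W)$ forces $\supp A \cie W$, so only indices $x \in W$ contribute. Similarly, if $x \notin \s\mathcal{K}$ then $\mathit{\Gamma}_x\mathcal{K} = 0$ and in particular $\mathit{\Gamma}_x A = 0$. Therefore the only nonzero generators are indexed by $x \in W \intersec \s\mathcal{K}$, each lying in $\mathit{\Gamma}_x\mathcal{K}$, which yields $A \in \langle \mathit{\Gamma}_x\mathcal{K} \; \vert \; x \in W\intersec \s\mathcal{K}\rangle_*$.

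I do not expect a genuine obstacle: once one convinces oneself that $\t(W)$ is stable under the action (which is where one uses that the action commutes with the Rickard idempotents) the two inclusions are essentially formal consequences of the separation axiom and the local-to-global principle respectively. The only subtlety is being careful to intersect with $\s\mathcal{K}$ when collecting the nonzero local pieces, which is handled automatically since $\mathit{\Gamma}_x A \in \mathit{\Gamma}_x\mathcal{K}$ is nonzero only when $x \in \s\mathcal{K}$.
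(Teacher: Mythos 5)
Your proof is correct and takes essentially the same approach as the paper's; the paper compresses the argument into a chain of equalities of localizing submodules, whose second equality is precisely your pair of containments, and relies on the same ingredients (the local-to-global principle in one direction, the separation axiom of Proposition \ref{prop_abs_supp_prop} in the other, and the observation that $\mathit{\Gamma}_x\mathcal{K}=0$ for $x\notin\s\mathcal{K}$ to cut down the index set).
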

\begin{proof}
By the local-to-global principle we have for every object $A$ of $\mathcal{K}$ an equality
\begin{displaymath}
\langle A \rangle_* = \langle \mathit{\Gamma}_xA \; \vert \; x\in \Spc\mathcal{T}^c\rangle_*.
\end{displaymath}
Thus
\begin{align*}
\t(W) &= \langle A \; \vert \; \supp A \cie W\rangle_* \\
&= \langle \mathit{\Gamma}_x A \; \vert \; A\in \mathcal{K}, x\in W\rangle_* \\
&= \langle \mathit{\Gamma}_x A \; \vert \; A\in \mathcal{K}, x\in W\intersec \s\mathcal{K}\rangle_* \\
&= \langle \mathit{\Gamma}_x\mathcal{K} \; \vert \; x\in W\intersec \s\mathcal{K} \rangle_*.
\end{align*}
\end{proof}

\begin{prop}\label{general_tau_inj}
Suppose the local-to-global principle holds for the action of $\mathcal{T}$ on $\mathcal{K}$ and let $W$ be a subset of $\Spc \mathcal{T}^c$. Then there is an equality of subsets
\begin{displaymath}
\s\t(W) = W \intersec \s\mathcal{K}.
\end{displaymath}
In particular, $\t$ is injective when restricted to subsets of $\s\mathcal{K}$.
\end{prop}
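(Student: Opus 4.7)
The plan is to establish the two inclusions $\sigma\tau(W) \cie W \intersec \sigma\mathcal{K}$ and $W \intersec \sigma\mathcal{K} \cie \sigma\tau(W)$ directly from the definitions, and then deduce the injectivity of $\tau$ formally.

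The inclusion $\sigma\tau(W) \cie W \intersec \sigma\mathcal{K}$ is immediate: if $x \in \sigma\tau(W)$ then there exists $A \in \tau(W)$ with $\mathit{\Gamma}_x A \neq 0$. Since $\supp A \cie W$ by the definition of $\tau(W)$, and since $\mathit{\Gamma}_x A \neq 0$ forces $x \in \supp A$, we obtain $x \in W$. The same nonvanishing, with $A \in \mathcal{K}$, also puts $x \in \sigma\mathcal{K}$.

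For the reverse inclusion I would take $x \in W \intersec \sigma\mathcal{K}$ and use the definition of $\sigma\mathcal{K}$ to produce $A \in \mathcal{K}$ with $\mathit{\Gamma}_x A \neq 0$. The key step is to verify that the object $\mathit{\Gamma}_x A$ itself lies in $\tau(W)$. Using the decomposition $\mathit{\Gamma}_x \mathbf{1} = \mathit{\Gamma}_{\mathcal{V}(x)} \mathbf{1} \otimes L_{\mathcal{Z}(x)} \mathbf{1}$ and iterating the separation axiom of Proposition~\ref{prop_abs_supp_prop}(4),
\begin{displaymath}
\supp \mathit{\Gamma}_x A = (\supp A) \intersec \mathcal{V}(x) \intersec (\Vis\mathcal{T}^c \setminus \mathcal{Z}(x)).
\end{displaymath}
A point $y$ lies in $\mathcal{V}(x) \intersec (\Vis\mathcal{T}^c \setminus \mathcal{Z}(x))$ precisely when $\mathcal{V}(y) \cie \mathcal{V}(x)$ and $\mathcal{V}(x) \cie \mathcal{V}(y)$, and by sobriety of $\Spc \mathcal{T}^c$ this forces $y = x$. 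Thus $\supp \mathit{\Gamma}_x A \cie \{x\} \cie W$, so $\mathit{\Gamma}_x A \in \tau(W)$. Idempotence of $\mathit{\Gamma}_x \mathbf{1}$ then yields $\mathit{\Gamma}_x(\mathit{\Gamma}_x A) \iso \mathit{\Gamma}_x A \neq 0$, placing $x$ in $\sigma\tau(W)$.

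The injectivity assertion follows at once: if $W_1, W_2 \cie \sigma\mathcal{K}$ satisfy $\tau(W_1) = \tau(W_2)$, then applying $\sigma$ and the equality just proved gives $W_1 = W_1 \intersec \sigma\mathcal{K} = \sigma\tau(W_1) = \sigma\tau(W_2) = W_2 \intersec \sigma\mathcal{K} = W_2$. I would note that the local-to-global hypothesis is not strictly necessary for this direct argument, but a more structural route is available: Lemma~\ref{general_im_tau} (which genuinely uses local-to-global) rewrites $\tau(W)$ as $\langle \mathit{\Gamma}_x\mathcal{K} \; \vert \; x \in W \intersec \sigma\mathcal{K}\rangle_*$, and the support of this submodule can then be read off from the triangle, suspension, and coproduct properties of Proposition~\ref{prop_abs_supp_prop}(1)--(3). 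The only real obstacle, regardless of route, is the identification of the intersection appearing in the separation computation with the singleton $\{x\}$, i.e.\ the sobriety observation.
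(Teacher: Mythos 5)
Your proof is correct, but it takes a genuinely different route from the paper's. The paper first invokes Lemma~\ref{general_im_tau} (which really uses the local-to-global hypothesis) to rewrite $\t(W)$ as the submodule $\langle \mathit{\Gamma}_x\mathcal{K} \mid x\in W\intersec\s\mathcal{K}\rangle_*$, and then reads off both inclusions from the properties of the support in Proposition~\ref{prop_abs_supp_prop}. You instead argue by hand from the definitions of $\s$ and $\t$: the inclusion $\s\t(W)\cie W\intersec\s\mathcal{K}$ is immediate, and for the reverse you compute via the separation axiom that $\supp\mathit{\Gamma}_xA\cie\{x\}$, then use idempotence of $\mathit{\Gamma}_x\mathbf{1}$. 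This computation is exactly the observation behind Corollary~\ref{cor_supp_point1}, and the collapse of $\mathcal{V}(x)\intersec(\Vis\mathcal{T}^c\setminus\mathcal{Z}(x))$ to $\{x\}$ is the $T_0$ property of $\Spc\mathcal{T}^c$ (\cite{BaSpec}~Proposition~2.9) rather than sobriety per se, though of course sobriety suffices. The notable payoff of your approach, which you correctly flag, is that the local-to-global hypothesis is not actually used: the equality $\s\t(W)=W\intersec\s\mathcal{K}$ and the injectivity of $\t$ on subsets of $\s\mathcal{K}$ hold unconditionally. The paper's route, by contrast, piggybacks on the structural Lemma~\ref{general_im_tau}, which it wants in hand anyway for later arguments (e.g.\ Lemma~\ref{lem_submod_im}), so there the hypothesis is doing real work even if it is superfluous for this particular proposition.
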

\begin{proof}
With $W\cie \Spc \mathcal{T}^c$ as in the statement we have
\begin{align*}
\s\t(W) &= \supp \t(W) \\
&= \supp \langle \mathit{\Gamma}_x\mathcal{K} \; \vert \; x\in W\intersec \s\mathcal{K} \rangle_*,
\end{align*}
the first equality by definition and the second by the last lemma. Thus $\s\t(W) = W\intersec \s\mathcal{K}$ as claimed: by the properties of the support (Proposition \ref{prop_abs_supp_prop}) we have $\s\t(W) \cie W\intersec \s\mathcal{K}$ and it must in fact be all of $W\intersec \s\mathcal{K}$ as $x\in \s\mathcal{K}$ if and only if $\mathit{\Gamma}_x\mathcal{K}$ contains a non-zero object.

\end{proof}

We will show that the local-to-global principle holds quite generally. Before proceeding let us fix some terminology we will use throughout the paper.

\begin{defn}\label{defn_hocolim}
We will say the tensor triangulated category $\mathcal{T}$ \emph{has a model} if it occurs as the homotopy category of a monoidal model category.
\end{defn}

Our main interest in such categories is that the existence of a monoidal model provides a good theory of homotopy colimits compatible with the tensor product. 

\begin{rem}
Of course instead of requiring that $\mathcal{T}$ arose from a monoidal model category we could, for instance, ask that $\mathcal{T}$ was the underlying category of a stable monoidal derivator. In fact we will only use directed homotopy colimits so one could use a weaker notion of a stable monoidal ``derivator'' only having homotopy left and right Kan extensions for certain diagrams; to be slightly more precise one could just ask for homotopy left and right Kan extensions along the smallest full $2$-subcategory of the category of small categories satisfying certain natural closure conditions and containing the ordinals (one can see the discussion before \cite{GrothPSD} Definition 4.21 for further details).
\end{rem}

We begin by showing that, when $\mathcal{T}$ has a model, taking the union of a chain of specialization closed subsets is compatible with taking the homotopy colimit of the associated idempotents.

\begin{lem}\label{lem_hocolim_ltg}
Suppose $\mathcal{T}$ has a model. Then for any chain $\{\mathcal{V}_i\}_{i\in I}$ of specialization closed subsets of $\Spc \mathcal{T}^c$ with union $\mathcal{V}$ there is an isomorphism
\begin{displaymath}
\mathit{\Gamma}_\mathcal{V}\mathbf{1} \iso \hocolim \mathit{\Gamma}_{\mathcal{V}_i}\mathbf{1}
\end{displaymath}
where the structure maps are the canonical ones.
\end{lem}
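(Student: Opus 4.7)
The plan is to produce a triangle in $\mathcal{T}$ of the form
\begin{displaymath}
\hocolim \mathit{\Gamma}_{\mathcal{V}_i}\mathbf{1} \to \mathbf{1} \to \hocolim L_{\mathcal{V}_i}\mathbf{1} \to \S \hocolim \mathit{\Gamma}_{\mathcal{V}_i}\mathbf{1},
\end{displaymath}
with the first term in $\mathit{\Gamma}_\mathcal{V}\mathcal{T}$ and the third term in $L_\mathcal{V}\mathcal{T}$, and then appeal to the uniqueness of the localization triangle associated to the smashing subcategory generated by $\mathcal{T}^c_\mathcal{V}$. For $i \leq j$ the inclusion $\mathit{\Gamma}_{\mathcal{V}_i}\mathcal{T} \cie \mathit{\Gamma}_{\mathcal{V}_j}\mathcal{T}$ yields canonical comparison maps $\mathit{\Gamma}_{\mathcal{V}_i}\mathbf{1} \to \mathit{\Gamma}_{\mathcal{V}_j}\mathbf{1}$ and $L_{\mathcal{V}_i}\mathbf{1} \to L_{\mathcal{V}_j}\mathbf{1}$ compatible with the localization triangles, so one gets an $I$-indexed diagram of triangles. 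Since $\mathcal{T}$ has a monoidal model, one can pass to the hocolim of this diagram and obtain the triangle above, using that the hocolim of the constant $I$-diagram on $\mathbf{1}$ with identity structure maps is $\mathbf{1}$.

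The first outer term is easy: each $\mathit{\Gamma}_{\mathcal{V}_i}\mathbf{1}$ lies in $\mathit{\Gamma}_{\mathcal{V}_i}\mathcal{T} \cie \mathit{\Gamma}_\mathcal{V}\mathcal{T}$ and $\mathit{\Gamma}_\mathcal{V}\mathcal{T}$ is localizing, hence contains the hocolim. For the third term we must show $\hocolim L_{\mathcal{V}_i}\mathbf{1}$ lies in $L_\mathcal{V}\mathcal{T} = \mathit{\Gamma}_\mathcal{V}\mathcal{T}^{\perp}$, and for this it suffices to check that $\Hom(t, \hocolim L_{\mathcal{V}_i}\mathbf{1}) = 0$ for any compact generator $t\in \mathcal{T}^c_\mathcal{V}$. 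Here we use noetherianness of $\Spc\mathcal{T}^c$: the support of such a $t$ is a closed subset with only finitely many irreducible components, each of whose generic points lies in some $\mathcal{V}_{i_k}$; since $I$ is totally ordered, there is a single index $k$ with $\supp t \cie \mathcal{V}_k$. Consequently $t \in \mathcal{T}^c_{\mathcal{V}_j}$ and $\Hom(t, L_{\mathcal{V}_j}\mathbf{1}) = 0$ for every $j \geq k$. By compactness of $t$ together with cofinality of the tail $j\geq k$ in $I$,
\begin{displaymath}
\Hom(t, \hocolim L_{\mathcal{V}_i}\mathbf{1}) \iso \colim \Hom(t, L_{\mathcal{V}_i}\mathbf{1}) = 0.
\end{displaymath}

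Having both containment statements in hand, the triangle produced at the start must be isomorphic, via a unique map of triangles, to the localization triangle $\mathit{\Gamma}_\mathcal{V}\mathbf{1} \to \mathbf{1} \to L_\mathcal{V}\mathbf{1}$, and the uniqueness of the factorization forces the comparison $\hocolim \mathit{\Gamma}_{\mathcal{V}_i}\mathbf{1} \to \mathit{\Gamma}_\mathcal{V}\mathbf{1}$ to be the one induced by the canonical maps $\mathit{\Gamma}_{\mathcal{V}_i}\mathbf{1} \to \mathit{\Gamma}_\mathcal{V}\mathbf{1}$. The main obstacle, and the only place the existence of a model is truly needed, is in the first step: one must know that hocolim along the chain $I$ (which may be an arbitrary well-ordered cardinal) both takes the constant identity diagram on $\mathbf{1}$ to $\mathbf{1}$ and preserves the triangles $\mathit{\Gamma}_{\mathcal{V}_i}\mathbf{1} \to \mathbf{1} \to L_{\mathcal{V}_i}\mathbf{1}$. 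For countable $I$ this is the standard Milnor triangle argument; for general chains one invokes the model structure (or equivalently the stable monoidal derivator) to define and manipulate transfinite hocolims compatibly with cofibre sequences and coproducts.
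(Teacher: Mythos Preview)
Your argument is correct and takes a genuinely different route from the paper's. The paper forms the comparison map $\hocolim \mathit{\Gamma}_{\mathcal{V}_i}\mathbf{1} \to \mathit{\Gamma}_\mathcal{V}\mathbf{1}$, completes it to a triangle with cone $Z$ lying in $\mathit{\Gamma}_\mathcal{V}\mathcal{T}$, and then shows $Z=0$ by checking $\mathit{\Gamma}_{\mathcal{V}_j}Z\iso 0$ for each relevant $j$; the key point there is that the \emph{monoidal} model makes $\mathit{\Gamma}_{\mathcal{V}_j}\mathbf{1}\otimes(-)$ commute with the homotopy colimit, turning the diagram into one that is eventually constant at $\mathit{\Gamma}_{\mathcal{V}_j}\mathbf{1}$. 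You instead take the homotopy colimit of the entire localization triangles $\mathit{\Gamma}_{\mathcal{V}_i}\mathbf{1}\to\mathbf{1}\to L_{\mathcal{V}_i}\mathbf{1}$, identify the middle term via contractibility of the filtered index, and then verify $\hocolim L_{\mathcal{V}_i}\mathbf{1}\in (\mathit{\Gamma}_\mathcal{V}\mathcal{T})^\perp$ using that $\Hom(t,-)$ commutes with filtered hocolims for $t$ compact. Both proofs locate $\supp t$ inside some $\mathcal{V}_j$ via noetherianity in exactly the same way. Your approach trades the monoidal compatibility of hocolim with $\otimes$ for exactness of hocolim together with the smallness of compacts; it has the pleasant feature of producing $L_\mathcal{V}\mathbf{1}\iso\hocolim L_{\mathcal{V}_i}\mathbf{1}$ simultaneously, while the paper's approach makes the role of the monoidal model more visibly indispensable.
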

\begin{proof}
As each $\mathcal{V}_i$ is contained in $\mathcal{V}$ there are corresponding inclusions for $i<j$ 
\begin{displaymath}
\mathcal{T}_{\mathcal{V}_i} \cie \mathcal{T}_{\mathcal{V}_j} \cie \mathcal{T}_\mathcal{V}
\end{displaymath}
which give rise to commuting triangles of canonical morphisms 
\begin{displaymath}
\xymatrix{
\mathit{\Gamma}_{\mathcal{V}_i}\mathbf{1} \ar[rr] \ar[dr] && \mathit{\Gamma}_{\mathcal{V}}\mathbf{1} \\
& \mathit{\Gamma}_{\mathcal{V}_j}\mathbf{1} \ar[ur]
}
\end{displaymath}
We thus get an induced morphism from the homotopy colimit of the $\mathit{\Gamma}_{\mathcal{V}_i}\mathbf{1}$ to $\mathit{\Gamma}_\mathcal{V}\mathbf{1}$ which we complete to a triangle
\begin{displaymath}
\hocolim_I \mathit{\Gamma}_{\mathcal{V}_i}\mathbf{1} \to \mathit{\Gamma}_\mathcal{V}\mathbf{1} \to Z \to \S \hocolim_I \mathit{\Gamma}_{\mathcal{V}_i}\mathbf{1}.
\end{displaymath}
In order to prove the lemma it is sufficient to show that $Z$ is isomorphic to the zero object in $\mathcal{T}$.

The argument in \cite{BousfieldBAS} extends to show localizing subcategories are closed under homotopy colimits so this triangle consists of objects of  $\mathit{\Gamma}_\mathcal{V}\mathcal{T}$. By definition $\mathit{\Gamma}_\mathcal{V}\mathcal{T}$ is the full subcategory of $\mathcal{T}$ generated by those objects of $\mathcal{T}^c$ whose support (in the sense of \cite{BaSpec}) is contained in $\mathcal{V}$. Thus $Z\iso 0$ if for each compact object $k$ with $\supp k \cie \mathcal{V}$ we have $\Hom(k,Z) = 0$; we remark that there is no ambiguity here as by \cite{BaRickard} Proposition 7.17 the two notions of support, that of \cite{BaSpec} and \cite{BaRickard}, agree for compact objects. In particular the support of any compact object is closed.

Recall from \cite{BKS} that $\Spc \mathcal{T}^c$ is spectral in the sense of Hochster \cite{HochsterSpectral} and we have assumed it is also noetherian. Thus $\supp k$, by virtue of being closed, is a finite union of irreducible closed subsets. We can certainly find a $j\in I$ so that $\mathcal{V}_j$ contains the generic points of these finitely many irreducible components which implies $\supp k \cie \mathcal{V}_j$ by specialization closure. Therefore, by adjunction, it is enough to show 
\begin{align*}
\Hom(k, Z) &\iso \Hom(\mathit{\Gamma}_{\mathcal{V}_j} k ,Z) \\
&\iso \Hom(k, \mathit{\Gamma}_{\mathcal{V}_j} Z)
\end{align*}
is zero, as this implies $Z \iso 0$ and we get the claimed isomorphism.

In order to show the claimed hom-set vanishes let us demonstrate that $\mathit{\Gamma}_{\mathcal{V}_j}Z$ is zero. Observe that tensoring the structure morphisms $\mathit{\Gamma}_{\mathcal{V}_{i_1}}\mathbf{1} \to \mathit{\Gamma}_{\mathcal{V}_{i_2}}\mathbf{1}$ for $i_2\geq i_1\geq j$ with $\mathit{\Gamma}_{\mathcal{V}_j}\mathbf{1}$ yields canonical isomorphisms
\begin{displaymath}
\mathit{\Gamma}_{\mathcal{V}_j}\mathbf{1} \iso \mathit{\Gamma}_{\mathcal{V}_j}\mathbf{1}\otimes \mathit{\Gamma}_{\mathcal{V}_{i_1}}\mathbf{1} \stackrel{\sim}{\to} \mathit{\Gamma}_{\mathcal{V}_j}\mathbf{1}\otimes \mathit{\Gamma}_{\mathcal{V}_{i_2}}\mathbf{1} \iso \mathit{\Gamma}_{\mathcal{V}_j}\mathbf{1}.
\end{displaymath}
Thus applying $\mathit{\Gamma}_{\mathcal{V}_j}$ to the sequence $\{\mathit{\Gamma}_{\mathcal{V}_i}\mathbf{1}\}_{i\in I}$ gives a diagram whose homotopy colimit is $\mathit{\Gamma}_{\mathcal{V}_j}\mathbf{1}$. From this we deduce that the first morphism in the resulting triangle
\begin{displaymath}
\mathit{\Gamma}_{\mathcal{V}_j}\mathbf{1} \otimes \hocolim_I \mathit{\Gamma}_{\mathcal{V}_i}\mathbf{1} \to \mathit{\Gamma}_{\mathcal{V}_j}\mathbf{1} \to \mathit{\Gamma}_{\mathcal{V}_j}Z 
\end{displaymath}
is an isomorphism as $\mathcal{T}$ is the homotopy category of a monoidal model category so the tensor product commutes with homotopy colimits. This forces $\mathit{\Gamma}_{\mathcal{V}_j}Z \iso 0$ completing the proof.
\end{proof}

\begin{lem}\label{lem_Amnon2.10_uber}
Let $P\cie \Spc \mathcal{T}^c$ be given and suppose $A$ is an object of $\mathcal{K}$ such that $\mathit{\Gamma}_x A \iso 0$ for all $x\in (\Spc \mathcal{T}^c \setminus P)$. If $\mathcal{T}$ has a model then $A$ is an object of the localizing subcategory
\begin{displaymath}
\mathcal{L} = \langle \mathit{\Gamma}_y\mathcal{K} \; \vert \; y\in P\rangle_{\mathrm{loc}}.
\end{displaymath}
\end{lem}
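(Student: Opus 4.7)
My plan is to consider the collection $I$ of specialization closed subsets $\mathcal{V} \cie \Spc \mathcal{T}^c$ for which $\mathit{\Gamma}_\mathcal{V} A$ lies in $\mathcal{L}$, and to show via a Zorn-style argument that $\Spc \mathcal{T}^c$ itself lies in $I$. Since the localizing subcategory generated by the compact objects supported on $\Spc \mathcal{T}^c$ is all of $\mathcal{T}$, the corresponding idempotent satisfies $\mathit{\Gamma}_{\Spc \mathcal{T}^c}\mathbf{1} \iso \mathbf{1}$, so once we have $\Spc\mathcal{T}^c \in I$ we conclude $A \iso \mathit{\Gamma}_{\Spc \mathcal{T}^c} A \in \mathcal{L}$.

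To apply Zorn's lemma I first verify that $I$ is closed under chain unions. Given a chain $\{\mathcal{V}_i\}$ in $I$ with union $\mathcal{V}$, Lemma \ref{lem_hocolim_ltg} gives $\mathit{\Gamma}_\mathcal{V}\mathbf{1} \iso \hocolim \mathit{\Gamma}_{\mathcal{V}_i}\mathbf{1}$; acting on $A$ and invoking coproduct-preservation of the action yields $\mathit{\Gamma}_\mathcal{V} A \iso \hocolim \mathit{\Gamma}_{\mathcal{V}_i} A$. Since localizing subcategories are closed under homotopy colimits (as cited in the proof of Lemma \ref{lem_hocolim_ltg}), this places $\mathit{\Gamma}_\mathcal{V} A$ in $\mathcal{L}$. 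Combined with $\varnothing \in I$, Zorn produces a maximal element $\mathcal{V} \in I$.

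Suppose for contradiction that $\mathcal{V} \neq \Spc \mathcal{T}^c$. Using that $\Spc \mathcal{T}^c$ is noetherian and sober (by the spectrality of \cite{BKS} as in the previous proof), I pick $y \in \Spc \mathcal{T}^c \setminus \mathcal{V}$ such that every proper specialization of $y$ lies in $\mathcal{V}$: starting from any $y_0 \notin \mathcal{V}$ and iteratively replacing $y_i$ by a proper specialization outside $\mathcal{V}$ produces a strictly descending chain $\overline{\{y_0\}} \supsetneq \overline{\{y_1\}} \supsetneq \cdots$, which must terminate. Then $\mathcal{V}' := \mathcal{V} \un \mathcal{V}(y)$ is specialization closed and $\mathcal{V}' \setminus (\mathcal{V}' \intersec \mathcal{V}) = \{y\}$. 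Tensoring the localization triangle $\mathit{\Gamma}_\mathcal{V}\mathbf{1} \to \mathbf{1} \to L_\mathcal{V}\mathbf{1}$ with $\mathit{\Gamma}_{\mathcal{V}'}\mathbf{1}$, and using both $\mathit{\Gamma}_\mathcal{V}\mathbf{1} \otimes \mathit{\Gamma}_{\mathcal{V}'}\mathbf{1} \iso \mathit{\Gamma}_\mathcal{V}\mathbf{1}$ and Lemma \ref{lem_bik6.2_uber} (which identifies $L_\mathcal{V}\mathbf{1}\otimes \mathit{\Gamma}_{\mathcal{V}'}\mathbf{1}*(-)$ with $\mathit{\Gamma}_y$), I obtain a triangle
\begin{displaymath}
\mathit{\Gamma}_\mathcal{V} A \to \mathit{\Gamma}_{\mathcal{V}'} A \to \mathit{\Gamma}_y A \to \S\mathit{\Gamma}_\mathcal{V} A.
\end{displaymath}
By hypothesis $\mathit{\Gamma}_y A = 0$ if $y \notin P$, while $\mathit{\Gamma}_y A \in \mathit{\Gamma}_y\mathcal{K} \cie \mathcal{L}$ if $y \in P$; in either case $\mathit{\Gamma}_y A \in \mathcal{L}$. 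Together with $\mathit{\Gamma}_\mathcal{V} A \in \mathcal{L}$, the triangle then forces $\mathit{\Gamma}_{\mathcal{V}'} A \in \mathcal{L}$, contradicting maximality of $\mathcal{V}$.

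The main obstacle is really just the identification of $L_\mathcal{V}\mathbf{1} \otimes \mathit{\Gamma}_{\mathcal{V}'}\mathbf{1}*(-)$ with $\mathit{\Gamma}_y$, which is immediate from Lemma \ref{lem_bik6.2_uber} once noetherianity is exploited to produce a $y$ whose only non-$\mathcal{V}$ specialization is $y$ itself. The monoidal-model hypothesis on $\mathcal{T}$ enters only to invoke Lemma \ref{lem_hocolim_ltg}, which is what secures the chain-closure of $I$.
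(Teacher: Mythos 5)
Your proof is correct and follows essentially the same route as the paper's: a Zorn argument on specialization closed subsets $\mathcal{V}$ with $\mathit{\Gamma}_\mathcal{V}A \in \mathcal{L}$, chain-closure via Lemma~\ref{lem_hocolim_ltg}, and then passing from a maximal such $\mathcal{V}$ to $\mathcal{V}\cup\{y\}$ using the triangle $\mathit{\Gamma}_\mathcal{V}A \to \mathit{\Gamma}_{\mathcal{V}\cup\{y\}}A \to \mathit{\Gamma}_yA$ and the hypothesis on $\mathit{\Gamma}_yA$. The only cosmetic differences are that you spell out the noetherian descent that produces a point $y \notin \mathcal{V}$ all of whose proper specializations lie in $\mathcal{V}$ (the paper states this as "maximal with respect to specialization"), and you obtain the key triangle by tensoring the $\mathcal{V}$-localization triangle with $\mathit{\Gamma}_{\mathcal{V}'}\mathbf{1}$ rather than applying $L_\mathcal{V}$ to the $\mathcal{V}'$-triangle; these yield the same triangle.
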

\begin{proof}
Let $\Lambda \cie \mathcal{P}(\Spc \mathcal{T}^c)$ be the set of specialization closed subsets $\mathcal{W}$ such that $\mathit{\Gamma}_\mathcal{W}A$ is in $\mathcal{L} = \langle \mathit{\Gamma}_y\mathcal{K} \; \vert \; y\in P\rangle_{\mathrm{loc}}$. We first note that $\Lambda$ is not empty. Indeed, as $\mathcal{T}^c$ is rigid the only compact objects with empty support are the zero objects by \cite{BaFilt} Corollary 2.5 so
\begin{displaymath}
\mathcal{T}_\varnothing = \langle t\in \mathcal{T}^c \; \vert \; \supp_{(\mathcal{T},\otimes)} t = \varnothing \rangle_\mathrm{loc} = \langle 0 \rangle_\mathrm{loc}
\end{displaymath}
giving $\mathit{\Gamma}_\varnothing A = 0$ and hence $\varnothing \in \Lambda$. 

Since $\mathcal{L}$ is localizing, Lemma \ref{lem_hocolim_ltg} shows the set $\Lambda$ is closed under taking increasing unions: as mentioned above the argument in \cite{BousfieldBAS} extends to show that localizing subcategories are closed under directed homotopy colimits in our situation. Thus $\Lambda$ contains a maximal element $Y$ by Zorn's lemma. We claim that $Y = \Spc \mathcal{T}^c$.

Suppose $Y\neq \Spc \mathcal{T}^c$. Then since $\Spc \mathcal{T}^c$ is noetherian $\Spc\mathcal{T}^c \setminus Y$ contains a maximal element $z$ with respect to specialization. We have 
\begin{displaymath}
L_Y\mathbf{1}\otimes\mathit{\Gamma}_{Y\un \{z\}}\mathbf{1} \iso \mathit{\Gamma}_z\mathbf{1}
\end{displaymath}
as $Y\un \{z\}$ is specialization closed by maximality of $z$ and Lemma \ref{lem_bik6.2_uber} tells us that we can use any suitable pair of Thomason subsets to define $\mathit{\Gamma}_z\mathbf{1}$. So $L_Y\mathit{\Gamma}_{Y\un \{z\}}A \iso \mathit{\Gamma}_z A$ and by our hypothesis on vanishing either $\mathit{\Gamma}_z \mathcal{K} \cie \mathcal{L}$ or \mbox{$\mathit{\Gamma}_z A = 0$}. Considering the triangle
\begin{displaymath}
\xymatrix{
\mathit{\Gamma}_Y\mathit{\Gamma}_{Y\un \{z\}} A \ar[d]_{\wr} \ar[r] & \mathit{\Gamma}_{Y\un \{z\}}A \ar[r] & L_Y\mathit{\Gamma}_{Y\un \{z\}}A \ar[d]^{\wr} \\
\mathit{\Gamma}_Y A & & \mathit{\Gamma}_z A
}
\end{displaymath}
we see that in either case, since $\mathit{\Gamma}_YA$ is in $\mathcal{L}$, that $Y\un \{z\}\in \Lambda$ contradicting maximality of $Y$. Hence $Y = \Spc \mathcal{T}^c$ and so $A$ is in $\mathcal{L}$.
\end{proof}

\begin{prop}\label{prop_ltg_idempotents}
Suppose $\mathcal{T}$ has a model. Then the local-to-global principle holds for the action of $\mathcal{T}$ on $\mathcal{K}$. Explicitly for any $A$ in $\mathcal{K}$ there is an equality of $\mathcal{T}$-submodules
\begin{displaymath}
\langle A \rangle_* = \langle \mathit{\Gamma}_x A \; \vert \; x\in \supp A \rangle_*.
\end{displaymath}
\end{prop}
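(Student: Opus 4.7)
The plan is to apply Lemma~\ref{lem_Amnon2.10_uber} to $A$, but not in the ambient category $\mathcal{K}$: rather, in the submodule $\langle A\rangle_*$ itself, which is a triangulated category with set-indexed coproducts carrying an action of $\mathcal{T}$. The proof of Lemma~\ref{lem_Amnon2.10_uber} only uses formal features of the action together with the hypotheses that $\mathcal{T}$ has a model and $\Spc\mathcal{T}^c$ is noetherian; none of these require the ambient category to be compactly generated, so the argument applies verbatim in the submodule setting. Taking $P=\supp A$, the hypothesis $\mathit{\Gamma}_x A\cong 0$ for $x\notin P$ holds by the very definition of $\supp A$, and the conclusion is that $A$ lies in the localizing subcategory
\[
\mathcal{L}_0:=\langle \mathit{\Gamma}_y B \mid y\in \supp A,\; B\in \langle A\rangle_*\rangle_{\mathrm{loc}}
\]
of $\langle A\rangle_*$.

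It then remains to show $\mathcal{L}_0\subseteq \langle \mathit{\Gamma}_x A\mid x\in\supp A\rangle_*$, for which it suffices to prove that for each fixed $y\in\supp A$, every object of the form $\mathit{\Gamma}_y B$ with $B\in \langle A\rangle_*$ already lies in $\langle \mathit{\Gamma}_y A\rangle_*$. To this end I would introduce the auxiliary subcategory
\[
\mathcal{N}_y=\{B\in\mathcal{K}\mid \mathit{\Gamma}_y B\in \langle \mathit{\Gamma}_y A\rangle_*\},
\]
which is localizing by Lemma~\ref{lem_general_closure} applied to the exact coproduct preserving functor $\mathit{\Gamma}_y\mathbf{1}*(-)$ and the localizing subcategory $\langle \mathit{\Gamma}_y A\rangle_*$ of $\mathcal{K}$. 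It is moreover a $\mathcal{T}$-submodule: for $Z\in \mathcal{T}$ and $B\in \mathcal{N}_y$ one has
\[
\mathit{\Gamma}_y(Z*B)\cong (\mathit{\Gamma}_y\mathbf{1}\otimes Z)*B\cong Z*\mathit{\Gamma}_y B,
\]
and the right-hand side belongs to $\langle \mathit{\Gamma}_y A\rangle_*$ since the latter is closed under the action. As $A\in \mathcal{N}_y$ trivially, we conclude $\langle A\rangle_*\subseteq \mathcal{N}_y$, which is the required inclusion.

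Combining the two steps gives $A\in \langle \mathit{\Gamma}_x A\mid x\in \supp A\rangle_*$, and together with the trivial reverse inclusion (since $\mathit{\Gamma}_x A\in \langle A\rangle_*$) this yields the proposition. The main conceptual obstacle is recognising that Lemma~\ref{lem_Amnon2.10_uber} must be invoked inside the submodule $\langle A\rangle_*$ rather than inside $\mathcal{K}$: applying it directly to $A\in\mathcal{K}$ would only place $A$ in $\langle \mathit{\Gamma}_y\mathcal{K}\mid y\in \supp A\rangle_{\mathrm{loc}}$, a subcategory much larger than the desired submodule generated by the $\mathit{\Gamma}_x A$ alone. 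The bookkeeping step then needed to pass from $\mathit{\Gamma}_y B$ with $B\in\langle A\rangle_*$ down to $\langle \mathit{\Gamma}_y A\rangle_*$ is handled by the standard trick of constructing the auxiliary submodule $\mathcal{N}_y$.
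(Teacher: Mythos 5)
Your proof is correct, but it takes a route genuinely different from the paper's. Both arguments rely on Lemma~\ref{lem_Amnon2.10_uber} as the essential input; they differ in where and how it is invoked.

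The paper applies Lemma~\ref{lem_Amnon2.10_uber} to $\mathcal{T}$ acting on itself with $P=\Spc\mathcal{T}^c$ (so the vanishing hypothesis is vacuous and the lemma is used in a setting where it literally applies, since $\mathcal{T}$ is compactly generated). This gives $\mathcal{T}=\langle\mathit{\Gamma}_x\mathcal{T}\mid x\in\Spc\mathcal{T}^c\rangle_{\mathrm{loc}}$, hence $\mathcal{T}=\langle\mathit{\Gamma}_x\mathbf{1}\mid x\rangle_\otimes$, and this global generating statement is then transferred to $\mathcal{K}$ by the commutation Lemma~\ref{lem_loc_commutes2}: $\langle A\rangle_*=\langle\mathbf{1}\rangle_\otimes*\langle A\rangle_{\mathrm{loc}}=\langle\mathit{\Gamma}_x\mathbf{1}\mid x\rangle_\otimes*\langle A\rangle_{\mathrm{loc}}=\langle\mathit{\Gamma}_x A\mid x\in\supp A\rangle_*$. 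You instead apply the lemma inside $\langle A\rangle_*$ with $P=\supp A$, which forces you to argue that the proof of Lemma~\ref{lem_Amnon2.10_uber} does not secretly use compact generation of the ambient module category. That observation is correct --- the proof only uses rigidity of $\mathcal{T}^c$, noetherianity of $\Spc\mathcal{T}^c$, the model (via Lemma~\ref{lem_hocolim_ltg}), closure of localizing subcategories under directed homotopy colimits, and Zorn's lemma, none of which touch $\mathcal{K}^c$ --- but it is a fact about the \emph{proof} rather than the statement, and one must be careful with such an extension. Your step two, the auxiliary submodule $\mathcal{N}_y=\{B\mid \mathit{\Gamma}_y B\in\langle\mathit{\Gamma}_y A\rangle_*\}$, is a sound instance of the standard closure trick (Lemma~\ref{lem_general_closure}) and correctly finishes the argument, replacing the role played in the paper by the commutation lemmas. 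The paper's route is tighter and never steps outside the stated hypotheses of the lemmas it cites; yours is more local, working directly inside $\langle A\rangle_*$, and makes visible the fact that the local-to-global argument is really a statement about a single orbit of $A$ rather than about $\mathcal{K}$ as a whole.
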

\begin{proof}
By Lemma \ref{lem_Amnon2.10_uber} applied to the action
\begin{displaymath}
\mathcal{T}\times \mathcal{T} \stackrel{\otimes}{\to} \mathcal{T}
\end{displaymath}
we see $\mathcal{T} = \langle \mathit{\Gamma}_x\mathcal{T}\; \vert \; x\in \Spc \mathcal{T}^c\rangle_\mathrm{loc}$. Since $\mathit{\Gamma}_x\mathcal{T} = \langle \mathit{\Gamma}_x\mathbf{1}\rangle_\otimes$ it follows that the set of objects $\{\mathit{\Gamma}_x\mathbf{1} \; \vert \; x\in \Spc \mathcal{T}^c \}$ generates $\mathcal{T}$ as a localizing $\otimes$-ideal. By Lemma \ref{lem_loc_commutes2} given an object $A\in \mathcal{K}$ we get a generating set for  $\mathcal{T}*\langle A\rangle_\mathrm{loc}$:
\begin{displaymath}
\mathcal{T}* \langle A \rangle_{\mathrm{loc}} = \langle \mathit{\Gamma}_x\mathbf{1} \; \vert \; x\in \Spc \mathcal{T}\rangle_\otimes * \langle A \rangle_{\mathrm{loc}}= \langle \mathit{\Gamma}_x A \; \vert \; x\in \supp A\rangle_*.
\end{displaymath}
But it is also clear that $\mathcal{T} = \langle \mathbf{1} \rangle_\otimes$ so, by Lemma \ref{lem_loc_commutes2} again,
\begin{displaymath}
\mathcal{T}* \langle A \rangle_\mathrm{loc} = \langle \mathbf{1} \rangle_\otimes * \langle A \rangle_\mathrm{loc} = \langle A \rangle_*
\end{displaymath}
and combining this with the other string of equalities gives
\begin{displaymath}
\langle A \rangle_* = \mathcal{T}*\langle A \rangle_\mathrm{loc} = \langle \mathit{\Gamma}_x A \; \vert \; x\in \supp A\rangle_*
\end{displaymath}
which completes the proof.

\end{proof}


We thus have the following theorem concerning the local-to-global principle for actions of rigidly-compactly generated tensor triangulated categories.

\begin{thm}\label{thm_general_ltg}
Suppose $\mathcal{T}$ is a rigidly-compactly generated tensor triangulated category with a model and that $\Spc \mathcal{T}^c$ is noetherian. Then $\mathcal{T}$ satisfies the following properties:
\begin{itemize}
\item[$(i)$] The local-to-global principle holds for the action of $\mathcal{T}$ on itself;
\item[$(ii)$] The associated support theory detects vanishing of objects i.e., $X \in \mathcal{T}$ is zero if and only if $\supp X = \varnothing$;
\item[$(iii)$] For any chain $\{\mathcal{V}_i\}_{i\in I}$ of specialization closed subsets of $\Spc \mathcal{T}^c$ with union $\mathcal{V}$ there is an isomorphism
\begin{displaymath}
\mathit{\Gamma}_\mathcal{V}\mathbf{1} \iso \hocolim \mathit{\Gamma}_{\mathcal{V}_i}\mathbf{1}
\end{displaymath}
where the structure maps are the canonical ones.
\end{itemize}
Furthermore, the relative versions of (i) and (ii) hold for any action of $\mathcal{T}$ on a compactly generated triangulated category $\mathcal{K}$.
\end{thm}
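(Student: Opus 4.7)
The plan is to observe that nearly all of the content is already packaged in the preceding lemmas and proposition, so the theorem is essentially a harvest.

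First, part $(i)$ together with its relative version is exactly Proposition \ref{prop_ltg_idempotents}: that proposition is stated and proved at the full level of generality of an action $\mathcal{T}\times\mathcal{K}\stackrel{*}{\to}\mathcal{K}$ (with $\mathcal{T}$ having a model), so specialising to the regular action $\mathcal{T}\times\mathcal{T}\stackrel{\otimes}{\to}\mathcal{T}$ gives $(i)$, and taking a general $\mathcal{K}$ gives the relative version. Part $(iii)$ is Lemma \ref{lem_hocolim_ltg} verbatim, since the noetherianity of $\Spc\mathcal{T}^c$ identifies specialization closed subsets with Thomason subsets so the lemma applies to any such chain.

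For $(ii)$, I would deduce it from $(i)$ by a one-line argument. Let $X\in\mathcal{T}$ and suppose $\supp X=\varnothing$, i.e.\ $\mathit{\Gamma}_x\mathbf{1}\otimes X\iso 0$ for every $x\in \Spc\mathcal{T}^c$ (noetherianity ensures every point is visible). By the local-to-global principle,
\begin{displaymath}
\langle X\rangle_{*} \;=\; \langle \mathit{\Gamma}_x\mathbf{1}\otimes X\;\vert\; x\in \Spc\mathcal{T}^c\rangle_{*} \;=\; \langle 0\rangle_{*} \;=\; 0,
\end{displaymath}
forcing $X=0$; the converse is trivial from the definition of support. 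The relative version of $(ii)$ is identical: if $A\in\mathcal{K}$ has $\supp A=\varnothing$ then $\mathit{\Gamma}_x A\iso 0$ for every $x$ and the relative local-to-global principle yields $\langle A\rangle_{*}=0$, so $A=0$.

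The only step with any real mathematical content is the one already carried out in Lemma \ref{lem_hocolim_ltg}, namely the compatibility of the idempotents $\mathit{\Gamma}_{\mathcal{V}_i}\mathbf{1}$ with directed homotopy colimits; this is where the monoidal model hypothesis is used, via the fact that $\otimes$ commutes with homotopy colimits, and where noetherianity of the spectrum enters to reduce testing against any compact $k$ to a single $\mathcal{V}_j$ containing $\supp k$. Everything in the statement of the theorem is then a formal consequence, so no separate obstacle remains; the proof is simply the act of citing Proposition \ref{prop_ltg_idempotents} and Lemma \ref{lem_hocolim_ltg} and performing the short deduction of $(ii)$ from $(i)$ above.
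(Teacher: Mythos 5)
Your proof is correct and follows exactly the paper's own argument: cite Lemma \ref{lem_hocolim_ltg} for $(iii)$, cite Proposition \ref{prop_ltg_idempotents} for $(i)$ and its relative version, and deduce $(ii)$ (and its relative version) from $(i)$ by observing that $\supp X = \varnothing$ forces $\langle X\rangle_* = \langle 0\rangle_*$. Nothing is missing.
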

\begin{proof}
That (iii) always holds is the content of Lemma \ref{lem_hocolim_ltg} and we have proved in Proposition \ref{prop_ltg_idempotents} that (i) holds.
To see (i) implies (ii) observe that if $\supp X = \varnothing$ for an object $X$ of $\mathcal{T}$ then the local-to-global principle yields
\begin{displaymath}
\langle X \rangle_\otimes = \langle \mathit{\Gamma}_x X \; \vert \; x\in \Spc\mathcal{T}^c \rangle_\otimes = \langle 0 \rangle_\otimes
\end{displaymath}
so $X\iso 0$.

Finally, we saw in Proposition \ref{prop_ltg_idempotents} that the relative version of (i) holds. This in turn implies (ii) for supports with values in $\Spc \mathcal{T}^c$ by the same argument as we have used in the proof of (i)$\rimp$(ii) above.
\end{proof}

\section{The telescope conjecture}\label{sec_tele}

We now explore a relative version of the telescope conjecture. We show that for particularly nice actions $\mathcal{T}\times \mathcal{K} \stackrel{*}{\to} \mathcal{K}$ we can deduce the relative telescope conjecture for $\mathcal{K}$. We will denote by $\mathcal{T}$ a rigidly-compactly generated tensor triangulated category with noetherian spectrum (although let us note that some of the results hold more generally) and by $\mathcal{K}$ a compactly generated triangulated category on which $\mathcal{T}$ acts.

\begin{defn}\label{defn_relative_tele}
We say the \emph{relative telescope conjecture} holds for $\mathcal{K}$ with respect to the action of $\mathcal{T}$ if every smashing $\mathcal{T}$-submodule $\mathcal{S} \cie \mathcal{K}$ (we recall this means $\mathcal{S}$ is a localizing submodule with an associated coproduct preserving localization functor) is generated as a localizing subcategory by compact objects of $\mathcal{K}$. 
\end{defn}

\begin{rem}\label{rem_tele}
This reduces to the usual telescope conjecture if every localizing subcategory of $\mathcal{K}$ is a submodule, for example if $\mathcal{T}$ is generated as a localizing subcategory by $\mathbf{1}$. 
\end{rem}

\begin{lem}\label{lem_smashing_orthogideal}
Suppose $\mathcal{S} \cie \mathcal{K}$ is a smashing $\mathcal{T}$-submodule. Then $\mathcal{S}^\perp$ is a localizing $\mathcal{T}$-submodule.
\end{lem}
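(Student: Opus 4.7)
The plan is to check the two defining conditions for $\mathcal{S}^\perp$ to be a localizing $\mathcal{T}$-submodule: that it is localizing (closed under triangles, suspensions, summands, and coproducts) and that it is closed under the action of $\mathcal{T}$. Being a right orthogonal, $\mathcal{S}^\perp$ is automatically closed under triangles, suspensions, and summands. The only nontrivial point for the localizing condition is closure under coproducts, and this is where the smashing hypothesis enters essentially: smashing of $\mathcal{S}$ supplies a coproduct-preserving localization functor $L$ on $\mathcal{K}$ whose class of $L$-local objects is precisely $\mathcal{S}^\perp$, and a coproduct of $L$-local objects is again $L$-local because $\coprod A_i \to L\coprod A_i \iso \coprod LA_i$ is an isomorphism whenever each $A_i \to LA_i$ is.

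For closure under the action, the idea is to pass across an adjunction and transfer the problem back into $\mathcal{S}$, where the assumption $A \in \mathcal{S}^\perp$ can be exploited. Fix $A \in \mathcal{S}^\perp$ and a compact object $t \in \mathcal{T}^c$. By Remark \ref{rem_adjoints}, $t*(-)$ has a right adjoint $t^\vee *(-)$, and applying the same remark to $t^\vee$ (whose dual is again $t$, by rigidity) yields that $t^\vee *(-)$ is also \emph{left} adjoint to $t*(-)$. Hence for any $S \in \mathcal{S}$,
\begin{displaymath}
\Hom_\mathcal{K}(S, t*A) \iso \Hom_\mathcal{K}(t^\vee * S, A).
\end{displaymath}
Because $\mathcal{S}$ is a $\mathcal{T}$-submodule we have $t^\vee * S \in \mathcal{S}$, and because $A \in \mathcal{S}^\perp$ the right-hand side vanishes. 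Thus $t*A \in \mathcal{S}^\perp$ for every $t \in \mathcal{T}^c$.

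To upgrade closure under the action of $\mathcal{T}^c$ to closure under the action of all of $\mathcal{T}$ one invokes Lemma \ref{lem_general_closure2} applied to $\mathcal{X} = \mathcal{T}^c$ and the localizing subcategory $\mathcal{S}^\perp$; since $\mathcal{T}$ is compactly generated, $\langle \mathcal{T}^c \rangle = \mathcal{T}$, and the lemma gives closure of $\mathcal{S}^\perp$ under the action of $\mathcal{T}$. The only slightly delicate ingredient is noticing the two-sided adjunction $t^\vee * \dashv\, t* \,\dashv t^\vee *$ for $t$ compact, but this is a direct consequence of rigidity, so I do not anticipate any genuine obstacle; all remaining input has already been established in the paper.
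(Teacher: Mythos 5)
Your proof is correct and follows essentially the same route as the paper: both establish that $\mathcal{S}^\perp$ is localizing from the smashing hypothesis, use the rigidity-derived adjunction between $t*$ and $t^\vee *$ for compact $t$ to show $\mathcal{S}^\perp$ is preserved by the action of $\mathcal{T}^c$, and then bootstrap to all of $\mathcal{T}$ via compact generation. The only cosmetic difference is that you invoke the two-sided adjunction $t^\vee* \dashv t* \dashv t^\vee*$ to transfer a Hom-vanishing directly, whereas the paper works with the adjunction $t* \dashv t^\vee*$ and then appeals to the involutivity of duals on $\mathcal{T}^c$ to reach the same conclusion.
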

\begin{proof}
Let us denote by $\mathcal{L}$ the subcategory of those objects of $\mathcal{T}$ which send $\mathcal{S}^\perp$ to itself
\begin{displaymath}
\mathcal{L} = \{X\in \mathcal{T} \; \vert \; X*\mathcal{S}^\perp \cie \mathcal{S}^\perp\}.
\end{displaymath}
As $\mathcal{S}$ is smashing the subcategory $\mathcal{S}^\perp$ is a localizing subcategory of $\mathcal{K}$ (see for example \cite{KrLoc} Proposition 5.5.1). Thus $\mathcal{L}$ is a localizing subcategory of $\mathcal{T}$ by the standard argument.

If $x$ is a compact object of $\mathcal{T}$ then, as we have assumed $\mathcal{T}$ rigidly-compactly generated, the object $x$ is strongly dualizable. By Remark \ref{rem_adjoints} the functor $x*(-)$ has a right adjoint $x^\vee *(-)$ so
given $B$ in $\mathcal{S}^\perp$ we have, for every $A$ in $\mathcal{S}$,
\begin{displaymath}
0 =\Hom(x*A,B) \iso \Hom(A,x^\vee * B),
\end{displaymath}
where the first hom-set vanishes due to the fact that $\mathcal{S}$ is a submodule so $x*A$ is an object of $\mathcal{S}$. Hence $x^\vee *B$ is an object of $\mathcal{S}^\perp$ for every $x$ in $\mathcal{T}^c$. As taking duals of compact objects in $\mathcal{T}$ is involutive this implies that every object of $\mathcal{T}^c$ sends $\mathcal{S}^\perp$ to $\mathcal{S}^\perp$. Thus $\mathcal{T}^c$ is contained in the localizing subcategory $\mathcal{L}$ yielding the equality $\mathcal{L} = \mathcal{T}$. Hence every object $X$ of $\mathcal{T}$ satisfies $X*\mathcal{S}^\perp \cie \mathcal{S}^\perp$ so that $\mathcal{S}^\perp$ is a localizing $\mathcal{T}$-submodule of $\mathcal{K}$.
\end{proof}

\begin{defn}
Let $\mathcal{M}$ be a localizing $\mathcal{T}$-submodule of $\mathcal{K}$. We define a subcategory $\mathcal{T}_\mathcal{M}$ of $\mathcal{T}$ by
\begin{displaymath}
\mathcal{T}_\mathcal{M} = \{X \in \mathcal{T} \; \vert \; X*\mathcal{K} \cie \mathcal{M}\}.
\end{displaymath}
\end{defn}

\begin{lem}\label{lem_telecat_loc1}
Suppose $\mathcal{M}$ is a localizing submodule of $\mathcal{K}$. Then the subcategory $\mathcal{T}_\mathcal{M}$ is a localizing $\otimes$-ideal of $\mathcal{T}$. 
\end{lem}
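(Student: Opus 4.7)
The plan is to verify the triangulated and coproduct-closure properties by exhibiting $\mathcal{T}_{\mathcal{M}}$ as an intersection of kernels of exact coproduct-preserving functors, and then to check the $\otimes$-ideal condition by direct appeal to associativity of the action together with the fact that $\mathcal{M}$ is already a $\mathcal{T}$-submodule.

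More concretely, for each fixed $A \in \mathcal{K}$ I would consider the functor $(-)*A \colon \mathcal{T} \to \mathcal{K}$, which is exact by definition of an action and preserves coproducts by axiom (4) of Definition \ref{defn_action}. Composing with the Verdier quotient $\mathcal{K} \to \mathcal{K}/\mathcal{M}$ (which exists since $\mathcal{M}$ is localizing) yields an exact coproduct-preserving functor whose kernel is
\[
\mathcal{T}_{\mathcal{M},A} = \{X \in \mathcal{T} \mid X*A \in \mathcal{M}\},
\]
a localizing subcategory of $\mathcal{T}$. Since
\[
\mathcal{T}_{\mathcal{M}} = \bigcap_{A \in \mathcal{K}} \mathcal{T}_{\mathcal{M},A},
\]
and intersections of localizing subcategories are localizing, $\mathcal{T}_{\mathcal{M}}$ is a localizing subcategory of $\mathcal{T}$. (This is really an instance of Lemma \ref{lem_general_closure} applied to the family $\{(-)*A\}_{A\in\mathcal{K}}$.)

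It remains to check that $\mathcal{T}_{\mathcal{M}}$ is a $\otimes$-ideal. Let $X \in \mathcal{T}_{\mathcal{M}}$ and $Y \in \mathcal{T}$ be arbitrary. For any $A \in \mathcal{K}$ the associator gives
\[
(Y \otimes X)*A \iso Y*(X*A).
\]
By hypothesis $X*A \in \mathcal{M}$, and since $\mathcal{M}$ is a localizing $\mathcal{T}$-submodule it is closed under the action, so $Y*(X*A) \in \mathcal{M}$. Hence $(Y\otimes X)*A \in \mathcal{M}$ for all $A$, which is to say $Y \otimes X \in \mathcal{T}_{\mathcal{M}}$. This completes the verification.

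I do not anticipate any real obstacle here; the statement is a routine consequence of the general closure lemma combined with associativity, and the only point worth flagging is that one needs $\mathcal{M}$ to be a \emph{submodule} (not merely localizing) in order to conclude the $\otimes$-ideal property.
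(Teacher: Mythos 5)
Your proof is correct, and the overall strategy (Lemma \ref{lem_general_closure} applied to the family $\{(-)*A\}_{A\in\mathcal{K}}$ for localization-closure, then a one-line associativity argument for the $\otimes$-ideal condition) matches the paper's. The small difference is in the ideal step: you bracket $(Y\otimes X)*A$ as $Y*(X*A)$ and then invoke the fact that $\mathcal{M}$ is closed under the action of $\mathcal{T}$; the paper instead uses symmetry of $\otimes$ first, $(Y\otimes X)*A \iso (X\otimes Y)*A \iso X*(Y*A)$, so that the object lands in $X*\mathcal{K} \cie \mathcal{M}$ directly. As a consequence, your closing remark that one \emph{needs} $\mathcal{M}$ to be a submodule for the $\otimes$-ideal property is not quite right: the paper's route shows the $\otimes$-ideal conclusion already follows from $\mathcal{M}$ being merely localizing together with the symmetry of the monoidal product, with no appeal to the submodule hypothesis. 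Of course the submodule hypothesis is harmless since it appears in the statement, but it is not the load-bearing ingredient you suggest.
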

\begin{proof}
Lemma \ref{lem_general_closure} tells us that $\mathcal{T}_\mathcal{M}$ is localizing. It is also easily seen that $\mathcal{T}_\mathcal{M}$ is a $\otimes$-ideal. If $X$ is an object of $\mathcal{T}_\mathcal{M}$, $Y$ is any object of $\mathcal{T}$, and $A$ is in $\mathcal{K}$
\begin{displaymath}
(Y\otimes X)*A \iso (X\otimes Y)*A \iso X*(Y*A)
\end{displaymath}
which lies in $\mathcal{M}$ as $X*\mathcal{K} \cie \mathcal{M}$. Thus $Y\otimes X$ lies in $\mathcal{T}_\mathcal{M}$.
\end{proof}

\begin{hyp}\label{hyp_tele}
We now, and for the rest of this section unless otherwise stated, ask more of $\mathcal{T}$ and $\mathcal{K}$: we suppose $\mathcal{T}$ has a model, so Theorem \ref{thm_general_ltg} applies, and that the assignments $\s$ and $\t$ of Definition \ref{defn_vis_sigmatau} provide a bijection between subsets of $\s\mathcal{K} \cie \Spc \mathcal{T}^c$ (which we give the subspace topology throughout) and localizing $\mathcal{T}$-submodules of $\mathcal{K}$. In particular, for any localizing submodule $\mathcal{M}$ of $\mathcal{K}$ there is an equality 
\begin{displaymath}
\mathcal{M} = \t(\s\mathcal{M}) = \{A\in \mathcal{K} \; \vert \; \supp A \cie \s\mathcal{M}\}.
\end{displaymath}
\end{hyp}

\begin{ex}
Let $R$ be a commutative noetherian ring with unit. Then the action of $D(R)$, the unbounded derived category of $R$, on itself satisfies the above hypotheses. More generally, these hypotheses still hold if one replaces $R$ by a noetherian scheme; this follows from Corollary 4.13 of \cite{AJS3} (see our Corollary \ref{cor_epicwin} for a proof using actions).
\end{ex}

\begin{lem}\label{lem_submod_im}
Suppose $\mathcal{M}$ is a localizing $\mathcal{T}$-submodule of $\mathcal{K}$. Then there is an equality of subcategories
\begin{displaymath}
\mathcal{M} = \mathcal{T}_\mathcal{M}*\mathcal{K}.
\end{displaymath}
\end{lem}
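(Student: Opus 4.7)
The plan is to prove the two inclusions $\mathcal{T}_\mathcal{M}*\mathcal{K} \subseteq \mathcal{M}$ and $\mathcal{M} \subseteq \mathcal{T}_\mathcal{M}*\mathcal{K}$ separately. The first is essentially formal: every generator $X*A$ of $\mathcal{T}_\mathcal{M}*\mathcal{K}$ with $X\in \mathcal{T}_\mathcal{M}$, $A\in \mathcal{K}$ lies in $\mathcal{M}$ by the very definition of $\mathcal{T}_\mathcal{M}$, and since $\mathcal{M}$ is itself a localizing $\mathcal{T}$-submodule, it must contain the smallest such submodule generated by these objects.

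For the reverse inclusion I would use the local-to-global principle (Theorem \ref{thm_general_ltg}), which is available since $\mathcal{T}$ has a model and $\Spc\mathcal{T}^c$ is noetherian by Hypothesis \ref{hyp_tele}. Given $B\in \mathcal{M}$, the principle gives
\begin{displaymath}
\langle B\rangle_* = \langle \mathit{\Gamma}_xB \;\vert\; x\in \supp B\rangle_*,
\end{displaymath}
so it suffices to show each $\mathit{\Gamma}_xB = \mathit{\Gamma}_x\mathbf{1}*B$ belongs to $\mathcal{T}_\mathcal{M}*\mathcal{K}$ for $x\in \supp B$. Since $B\in \mathcal{M}$ we have $\supp B\subseteq \s\mathcal{M}$, so the crucial step is the claim that $\mathit{\Gamma}_x\mathbf{1}\in \mathcal{T}_\mathcal{M}$ whenever $x\in \s\mathcal{M}$; granting this, $\mathit{\Gamma}_xB$ lies in $\mathcal{T}_\mathcal{M}*\mathcal{K}$ and we are done.

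To verify the claim, fix $x\in \s\mathcal{M}$ and an arbitrary $A\in \mathcal{K}$; I need $\mathit{\Gamma}_x\mathbf{1}*A\in \mathcal{M}$. Computing the support via associativity of the action, for any $y\in \Spc\mathcal{T}^c$
\begin{displaymath}
\mathit{\Gamma}_y(\mathit{\Gamma}_x\mathbf{1}*A) \iso (\mathit{\Gamma}_y\mathbf{1}\otimes \mathit{\Gamma}_x\mathbf{1})*A,
\end{displaymath}
which vanishes for $y\ne x$ by the tensor-orthogonality of the point idempotents (Corollary \ref{cor_supp_point1}). Hence $\supp(\mathit{\Gamma}_x\mathbf{1}*A)\subseteq \{x\}\subseteq \s\mathcal{M}$. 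Now Hypothesis \ref{hyp_tele} delivers $\mathcal{M}=\t(\s\mathcal{M}) = \{C\in\mathcal{K}\;\vert\;\supp C\subseteq \s\mathcal{M}\}$, so $\mathit{\Gamma}_x\mathbf{1}*A\in \mathcal{M}$, proving $\mathit{\Gamma}_x\mathbf{1}\in \mathcal{T}_\mathcal{M}$.

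The step I expect to be the only substantive one is exactly this last claim, because it is precisely where the bijection hypothesis of \ref{hyp_tele} is needed: knowing only that $\supp(\mathit{\Gamma}_x\mathbf{1}*A)\subseteq\s\mathcal{M}$ gives membership in $\mathcal{M}$ \emph{only} under the classification assumption. The rest of the argument is a straightforward assembly via the local-to-global principle of this one pointwise observation.
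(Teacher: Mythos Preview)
Your proof is correct and follows essentially the same approach as the paper: both arguments use the local-to-global principle together with Hypothesis~\ref{hyp_tele} to reduce the inclusion $\mathcal{M}\subseteq \mathcal{T}_\mathcal{M}*\mathcal{K}$ to the key claim that $\mathit{\Gamma}_x\mathbf{1}\in\mathcal{T}_\mathcal{M}$ for $x\in\s\mathcal{M}$. The only minor difference is that the paper verifies this claim by invoking Lemma~\ref{general_im_tau} to write $\mathcal{M}=\langle \mathit{\Gamma}_x\mathcal{K}\;\vert\;x\in\s\mathcal{M}\rangle_*$ directly (so $\mathit{\Gamma}_x\mathcal{K}\subseteq\mathcal{M}$ is immediate), whereas you compute $\supp(\mathit{\Gamma}_x\mathbf{1}*A)\subseteq\{x\}$ explicitly via orthogonality of the point idempotents and then apply $\mathcal{M}=\t(\s\mathcal{M})$; both routes are equivalent.
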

\begin{proof}
By Lemma \ref{general_im_tau} and $\t(\s\mathcal{M}) = \mathcal{M}$ we have
\begin{displaymath}
\mathcal{M} = \langle \mathit{\Gamma}_x\mathcal{K} \; \vert \; x\in \s\mathcal{M} \rangle_*.
\end{displaymath}
So by definition of $\mathcal{T}_\mathcal{M}$ the objects $\mathit{\Gamma}_x\mathbf{1}$ for $x\in \s\mathcal{M}$ lie in $\mathcal{T}_\mathcal{M}$. Thus $\mathcal{M} \cie \mathcal{T}_\mathcal{M}*\mathcal{K}$. That $\mathcal{T}_\mathcal{M}*\mathcal{K} \cie \mathcal{M}$ is immediate from the definition of $\mathcal{T}_\mathcal{M}$ giving the claimed equality.
\end{proof}

\begin{prop}\label{prop_smashing_orthog}
Suppose $\mathcal{T}$ satisfies the telescope conjecture and let $\mathcal{S} \cie \mathcal{K}$ be a smashing $\mathcal{T}$-submodule. If the inclusion  $\mathcal{T}_\mathcal{S} \to \mathcal{T}$ admits a right adjoint and
\begin{displaymath}
(\mathcal{T}_\mathcal{S})^\perp = \mathcal{T}_{\mathcal{S}^\perp}
\end{displaymath}
then $\mathcal{S}$ is generated by compact objects of $\mathcal{K}$.
\end{prop}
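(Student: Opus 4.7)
The plan is to reduce the relative telescope conjecture for $\mathcal{S}\cie \mathcal{K}$ to the ordinary telescope conjecture for $\mathcal{T}_\mathcal{S}\cie \mathcal{T}$, by exploiting the identification $\mathcal{S} = \mathcal{T}_\mathcal{S}*\mathcal{K}$ of Lemma \ref{lem_submod_im}.

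First I would verify that $\mathcal{T}_\mathcal{S}$ is in fact smashing inside $\mathcal{T}$. By Lemma \ref{lem_telecat_loc1} it is already a localizing $\otimes$-ideal, and the hypothesis that the inclusion $\mathcal{T}_\mathcal{S}\hookrightarrow \mathcal{T}$ admits a right adjoint gives an associated localization sequence
\begin{displaymath}
\mathcal{T}_\mathcal{S} \to \mathcal{T} \to (\mathcal{T}_\mathcal{S})^\perp.
\end{displaymath}
The localization is smashing exactly when $(\mathcal{T}_\mathcal{S})^\perp$ is closed under coproducts (equivalently, localizing). Since $\mathcal{S}^\perp$ is itself a localizing $\mathcal{T}$-submodule of $\mathcal{K}$ by Lemma \ref{lem_smashing_orthogideal}, a second application of Lemma \ref{lem_telecat_loc1} shows $\mathcal{T}_{\mathcal{S}^\perp}$ is a localizing subcategory of $\mathcal{T}$. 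The identification $(\mathcal{T}_\mathcal{S})^\perp = \mathcal{T}_{\mathcal{S}^\perp}$ assumed in the statement therefore upgrades the existence of the right adjoint to smashingness of $\mathcal{T}_\mathcal{S}$.

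With this in hand, the telescope conjecture for $\mathcal{T}$ asserts that $\mathcal{T}_\mathcal{S}$ is generated as a localizing subcategory by objects of $\mathcal{T}^c$. Applying Proposition \ref{prop_action_generation} with $\mathcal{L}=\mathcal{T}_\mathcal{S}$ and $\mathcal{M}=\mathcal{K}$ (which is generated by its own compact objects), one concludes that $\mathcal{T}_\mathcal{S}*\mathcal{K}$ is generated, as a localizing subcategory, by objects compact in $\mathcal{K}$. Since Lemma \ref{lem_submod_im} gives $\mathcal{S}=\mathcal{T}_\mathcal{S}*\mathcal{K}$, this is the desired conclusion.

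The main obstacle is the first step: recognizing that the two apparently independent hypotheses — existence of a right adjoint to $\mathcal{T}_\mathcal{S}\hookrightarrow\mathcal{T}$ and the equality $(\mathcal{T}_\mathcal{S})^\perp=\mathcal{T}_{\mathcal{S}^\perp}$ — are precisely what is needed to promote $\mathcal{T}_\mathcal{S}$ to a smashing subcategory so that the telescope conjecture for $\mathcal{T}$ can be invoked. Once that is pinned down, the rest of the proof is a direct assembly of the generation lemmas already established.
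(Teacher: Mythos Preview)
Your proof is correct and follows essentially the same route as the paper's: both establish that $\mathcal{T}_\mathcal{S}$ is smashing by combining the right-adjoint hypothesis with the identification $(\mathcal{T}_\mathcal{S})^\perp = \mathcal{T}_{\mathcal{S}^\perp}$ (the latter being localizing via Lemmas \ref{lem_smashing_orthogideal} and \ref{lem_telecat_loc1}), then apply the telescope conjecture for $\mathcal{T}$, Lemma \ref{lem_submod_im}, and Proposition \ref{prop_action_generation} in exactly the same order. Your exposition of why the two hypotheses together yield smashingness is in fact slightly more explicit than the paper's, which simply cites \cite{BaRickard} Theorem 2.13 for that step.
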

\begin{proof}
The subcategory $\mathcal{S}$ is, by assumption, a localizing submodule and as it is smashing $\mathcal{S}^\perp$ is also a localizing submodule by Lemma \ref{lem_smashing_orthogideal}. Thus Lemma \ref{lem_telecat_loc1} yields that both $\mathcal{T}_\mathcal{S}$ and $\mathcal{T}_{\mathcal{S}^\perp}$ are localizing $\otimes$-ideals of $\mathcal{T}$. By hypothesis the $\otimes$-ideals $\mathcal{T}_\mathcal{S}$ and $(\mathcal{T}_{\mathcal{S}})^\perp = \mathcal{T}_{\mathcal{S}^\perp}$ fit into a localization sequence. Hence $\mathcal{T}_\mathcal{S}$ is a smashing subcategory of $\mathcal{T}$ (this is well known, see for example \cite{BaRickard} Theorem 2.13). As the telescope conjecture is assumed to hold for $\mathcal{T}$ the subcategory $\mathcal{T}_\mathcal{S}$ is generated by objects of $\mathcal{T}^c$. By Lemma \ref{lem_submod_im} there is an equality of submodules
\begin{displaymath}
\mathcal{S} = \mathcal{T}_\mathcal{S} * \mathcal{K}
\end{displaymath}
which implies that $\mathcal{S}$ is generated by compact objects of $\mathcal{K}$: by Proposition \ref{prop_action_generation}, since $\mathcal{T}$ is rigidly-compactly generated and $\mathcal{T}_\mathcal{S}$ is generated by objects of $\mathcal{T}^c$, the subcategory $\mathcal{T}_\mathcal{S}*\mathcal{K}$ is generated by objects of $\mathcal{K}^c$.
\end{proof}

\begin{lem}\label{lem_subset_subcat}
Let $\mathcal{M}$ be a localizing submodule of $\mathcal{K}$ and let $W$ be a subset of $\Spc\mathcal{T}^c$ such that $W\intersec \s\mathcal{K} = \s\mathcal{M}$. Then there is a containment of $\otimes$-ideals of $\mathcal{T}$
\begin{displaymath}
\mathcal{T}_\mathcal{M} \supseteq \mathcal{T}_{W} = \{X\in \mathcal{T} \; \vert \; \supp X \cie W\}
\end{displaymath}
and 
\begin{displaymath}
\mathcal{T}_{W}*\mathcal{K} = \mathcal{M}.
\end{displaymath}
\end{lem}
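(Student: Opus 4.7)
The plan is to prove (a) by a pointwise support calculation and then derive (b) by combining (a) with the local description of $\mathcal{M}$ furnished by Lemma \ref{general_im_tau}. For (a), fix any $X\in \mathcal{T}_W$, so $\supp X\cie W$, and any $A\in \mathcal{K}$; I want to show $X*A\in \mathcal{M}$. By Hypothesis \ref{hyp_tele} this reduces to verifying $\supp(X*A)\cie \s\mathcal{M}=W\intersec \s\mathcal{K}$. The key identity, which comes from associativity of the action, is
\begin{displaymath}
\mathit{\Gamma}_x(X*A)\iso (\mathit{\Gamma}_x\mathbf{1}\otimes X)*A
\end{displaymath}
for each $x\in \Spc\mathcal{T}^c$. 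If $x\notin \supp X$ then, writing $\mathit{\Gamma}_x\mathbf{1}=\mathit{\Gamma}_{\mathcal{V}(x)}\mathbf{1}\otimes L_{\mathcal{Z}(x)}\mathbf{1}$ and applying Proposition \ref{prop_abs_supp_prop}(4) twice to the self-action of $\mathcal{T}$, one computes $\supp(\mathit{\Gamma}_x\mathbf{1}\otimes X)=\supp X\intersec (\mathcal{V}(x)\setminus \mathcal{Z}(x))=\supp X\intersec \{x\}=\varnothing$; hence $\mathit{\Gamma}_x\mathbf{1}\otimes X=0$ by Theorem \ref{thm_general_ltg}(ii), and the display then forces $\mathit{\Gamma}_x(X*A)=0$. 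If instead $x\notin \s\mathcal{K}$ then $\mathit{\Gamma}_x\mathcal{K}=0$ straight from the definition of $\s\mathcal{K}$. Together these give $\supp(X*A)\cie \supp X\intersec \s\mathcal{K}\cie W\intersec \s\mathcal{K}=\s\mathcal{M}$, as required.

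For (b), the containment $\mathcal{T}_W*\mathcal{K}\cie \mathcal{M}$ is immediate from (a) together with the defining property of $\mathcal{T}_\mathcal{M}$ and Lemma \ref{lem_submod_im}, since
\begin{displaymath}
\mathcal{T}_W*\mathcal{K}\cie \mathcal{T}_\mathcal{M}*\mathcal{K}\cie \mathcal{M}.
\end{displaymath}
For the reverse inclusion, Lemma \ref{general_im_tau} gives the local description $\mathcal{M}=\langle \mathit{\Gamma}_x\mathcal{K}\;\vert\; x\in \s\mathcal{M}\rangle_*$. For each $x\in \s\mathcal{M}=W\intersec \s\mathcal{K}$, Corollary \ref{cor_supp_point1} supplies $\supp\mathit{\Gamma}_x\mathbf{1}=\{x\}\cie W$, so $\mathit{\Gamma}_x\mathbf{1}\in \mathcal{T}_W$ and hence $\mathit{\Gamma}_x\mathcal{K}\cie \mathcal{T}_W*\mathcal{K}$. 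Since $\mathcal{T}_W*\mathcal{K}$ is a localizing submodule, it contains $\mathcal{M}$.

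The main obstacle is really the tensor-vanishing input used in step (a), namely the claim that $\mathit{\Gamma}_x\mathbf{1}\otimes X=0$ whenever $x\notin \supp X$. This is the one place where the running assumption that $\mathcal{T}$ has a model (baked into Hypothesis \ref{hyp_tele}) is genuinely used, via the vanishing-detection statement in Theorem \ref{thm_general_ltg}(ii); once this tensor-vanishing is in hand, everything else is a formal manipulation of $\s$, $\t$, and the generators provided by the local-to-global principle.
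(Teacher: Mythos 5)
Your proof is correct and follows essentially the same route as the paper's: both establish $\mathcal{T}_W\cie\mathcal{T}_\mathcal{M}$ by a pointwise support calculation on $X*A$, and both then combine $\mathcal{T}_W*\mathcal{K}\cie\mathcal{T}_\mathcal{M}*\mathcal{K}\cie\mathcal{M}$ with the local-to-global description $\mathcal{M}=\langle\mathit{\Gamma}_x\mathcal{K}\;\vert\;x\in\s\mathcal{M}\rangle_*$ and Corollary \ref{cor_supp_point1} to get the reverse inclusion.

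One remark, though, concerning your closing paragraph: you take an unnecessary detour to justify $\mathit{\Gamma}_x\mathbf{1}\otimes X=0$ for $x\notin\supp X$. You first compute $\supp(\mathit{\Gamma}_x\mathbf{1}\otimes X)=\varnothing$ via Proposition \ref{prop_abs_supp_prop}(4) and then appeal to the vanishing-detection statement of Theorem \ref{thm_general_ltg}(ii). But $x\notin\supp X$ \emph{is by definition} the statement $\mathit{\Gamma}_x X=\mathit{\Gamma}_x\mathbf{1}\otimes X=0$; no hypotheses beyond the definition of support are needed, and in particular the model assumption plays no role here. This matters because it makes your diagnosis of where the model is ``genuinely used'' incorrect: the model enters not through this tensor-vanishing, but through the local-to-global principle underlying Lemma \ref{general_im_tau}, which is what you invoke to write $\mathcal{M}$ as $\langle\mathit{\Gamma}_x\mathcal{K}\;\vert\;x\in\s\mathcal{M}\rangle_*$ in the second half of the argument.
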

\begin{proof}
It follows from the good properties of the support that $\mathcal{T}_{W}$ is a localizing $\otimes$-ideal of $\mathcal{T}$. Let $X$ be an object of  $\mathcal{T}_{W}$, let $A$ be an object of $\mathcal{K}$ and let $x$ be a point in $\Spec \mathcal{T}^c$. We have isomorphisms
\begin{displaymath}
\mathit{\Gamma}_x\mathbf{1}*(X*A) \iso (\mathit{\Gamma}_x\mathbf{1} \otimes X)*A \iso X*(\mathit{\Gamma}_x\mathbf{1}*A).
\end{displaymath}
The object $\mathit{\Gamma}_x\mathbf{1}\otimes X$ is zero if $x$ is not in $W$ and $\mathit{\Gamma}_x\mathbf{1}*A \iso 0$ if $x\notin \s\mathcal{K}$ so we see $\supp X*A$ is contained in $\s\mathcal{M}$. Thus $X*A$ is an object of $\mathcal{M} = \t\s\mathcal{M}$. It follows that $X$ is in $\mathcal{T}_{\mathcal{M}}$ and hence $\mathcal{T}_{W} \cie \mathcal{T}_{\mathcal{M}}$.

As $\supp \mathit{\Gamma}_x\mathbf{1} = \{x\}$ for $x\in \Spc \mathcal{T}^c$ by Corollary \ref{cor_supp_point1} we have $\mathit{\Gamma}_x\mathbf{1} \in \mathcal{T}_{W}$ for $x\in \s\mathcal{M}$. By the local-to-global principle (Theorem \ref{thm_general_ltg}) and $\t(\s\mathcal{M}) = \mathcal{M}$ we have
\begin{displaymath}
\mathcal{M} = \langle \mathit{\Gamma}_x\mathcal{K} \; \vert \; x\in \s\mathcal{M} \rangle_*
\end{displaymath}
so $\mathcal{T}_{W}*\mathcal{K} \supseteq \mathcal{M}$. We proved above that $\mathcal{T}_{W}\cie \mathcal{T}_\mathcal{M}$  which gives $\mathcal{T}_{W}*\mathcal{K} \cie \mathcal{M}$. Thus $\mathcal{T}_{W}*\mathcal{K} = \mathcal{M}$.
\end{proof}

\begin{lem}\label{lem_quot_supp_closed}
Suppose the support of any compact object of $\mathcal{K}$ is a specialization closed subset of $\s\mathcal{K}$. Then for any specialization closed subset $\mathcal{V}$ of $\Spc \mathcal{T}^c$, with complement $\mathcal{U}$, the support of every compact object of $L_{\mathcal{V}}\mathcal{K}$ is specialization closed in the complement $\mathcal{U} \intersec \s\mathcal{K}$ of $\mathcal{V}\intersec \s\mathcal{K}$ in $\s\mathcal{K}$ (with the subspace topology).
\end{lem}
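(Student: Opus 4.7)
The strategy is to reduce the statement to the hypothesis on compact objects of $\mathcal{K}$ via Neeman's theorem on compactly generated localizations. By Corollary \ref{cor_transfer_cg}, the subcategory $\mathit{\Gamma}_\mathcal{V}\mathcal{K}$ is generated as a localizing subcategory of $\mathcal{K}$ by objects of $\mathcal{K}^c$. Neeman's theorem on Bousfield quotients then guarantees that $L_\mathcal{V}\mathcal{K}$ is compactly generated and that every compact object $B$ of $L_\mathcal{V}\mathcal{K}$ is a direct summand of $L_\mathcal{V}\mathbf{1}*a$ for some $a\in\mathcal{K}^c$; fix such an $a$ with $B\oplus B' \iso L_\mathcal{V}\mathbf{1}*a$.

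Next I would compute the relevant support via Proposition \ref{prop_abs_supp_prop}(4): $\supp(L_\mathcal{V}\mathbf{1}*a) = (\supp a)\intersec\mathcal{U}$. The hypothesis asserts $\supp a$ is specialization closed in $\s\mathcal{K}$, and a purely topological observation ensures that the intersection of a specialization closed subset of $\s\mathcal{K}$ with any subspace $Y\cie \s\mathcal{K}$ is specialization closed in $Y$; this is immediate from $\overline{\{x\}}^Y = \overline{\{x\}}^{\s\mathcal{K}}\intersec Y$. Taking $Y = \mathcal{U}\intersec\s\mathcal{K}$, the set $\supp(L_\mathcal{V}\mathbf{1}*a)$ is specialization closed in $\mathcal{U}\intersec\s\mathcal{K}$.

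The final step is to deduce the same property for $\supp B$, and this is where I expect the real work to lie. We certainly have $\supp B\cie \supp(L_\mathcal{V}\mathbf{1}*a)$, but passage to a direct summand can shrink supports and a priori destroy specialization closure: if $x\in \supp B$ and $y\in \overline{\{x\}}^{\s\mathcal{K}}\intersec\mathcal{U}$ then $y\in \supp a\intersec\mathcal{U} = \supp B\cun\supp B'$, but this alone does not rule out $y\in\supp B'\setminus\supp B$. To handle this, I would use that $\mathcal{K}^c$ is idempotent complete (since $\mathcal{K}$ has countable coproducts) and attempt to lift the idempotent $e\in \End_{L_\mathcal{V}\mathcal{K}}(L_\mathcal{V}\mathbf{1}*a)$ cutting out $B$ along the adjunction map $\End_\mathcal{K}(a)\to\End_{L_\mathcal{V}\mathcal{K}}(L_\mathcal{V}\mathbf{1}*a)$, possibly after enlarging $a$ within $\mathcal{K}^c$ to absorb the obstruction. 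The outcome would be a compact $a_1 \in \mathcal{K}^c$, a summand of (an enlargement of) $a$, with $B \iso L_\mathcal{V}\mathbf{1}*a_1$, at which point the hypothesis applied to $a_1$ and the computation of the previous paragraph finish the proof. The lifting of the idempotent is the one substantive step; everything else is a formal check with supports and subspace topologies.
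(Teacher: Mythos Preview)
Your overall strategy matches the paper's proof exactly up to the final step, but you have made that step much harder than it needs to be by invoking only the weak form of Neeman's localization theorem. You cite the version asserting that every compact $B$ in $L_\mathcal{V}\mathcal{K}$ is a \emph{summand} of $L_\mathcal{V}\mathbf{1}*a$ for some $a\in\mathcal{K}^c$, and then correctly observe that passing to a summand could in principle destroy specialization closure. Your proposed fix via idempotent lifting is left as a sketch and is the only nontrivial content of your argument.

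The paper avoids this entirely by using the sharper form of Neeman's result (\cite{NeeCat} Corollary~4.5.14): for every compact $l$ in $L_\mathcal{V}\mathcal{K}$ there exists $k\in\mathcal{K}^c$ with $l\oplus\Sigma l \iso \pi k$, where $\pi$ is the quotient functor. Since $\supp l = \supp \Sigma l$ (Proposition~\ref{prop_abs_supp_prop}(2)), one has
\[
\supp l = \supp(l\oplus\Sigma l) = \supp \pi k = \supp L_\mathcal{V}k = (\supp k)\intersec\mathcal{U},
\]
and this is specialization closed in $\mathcal{U}\intersec\s\mathcal{K}$ by your own topological observation. No idempotent lifting is needed: the complementary summand is $\Sigma l$, which has the same support as $l$, so nothing is lost.

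Two minor points: you wrote $\cun$ where you presumably meant $\un$, and your argument that $\pi$ preserves compacts (implicit in your first paragraph) is exactly the paper's opening observation, so that part is fine.
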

\begin{proof}
Let us denote by $\pi$ the quotient functor $\mathcal{K} \to L_{\mathcal{V}}\mathcal{K}$. We assert it sends compact objects to compact objects. To see this is the case recall $\mathit{\Gamma}_{\mathcal{V}}\mathcal{K}$ has a generating set consisting of objects in $\mathcal{K}^c$ by Corollary \ref{cor_transfer_cg} so $\pi$ has a coproduct preserving right adjoint. The functor $\pi$ thus takes compact objects to compact objects by Theorem 5.1 of \cite{NeeGrot}.

Given any compact object $l$ of $L_{\mathcal{V}}\mathcal{K}$ there exists an object $k$ in $\mathcal{K}^c$ such that $l\oplus \S l$ is isomorphic to  $\pi k$ by \cite{NeeCat} Corollary 4.5.14. Thus
\begin{displaymath}
\supp l = \supp (l\oplus \S l) = \supp \pi k = \supp L_{\mathcal{V}} k = (\supp k) \intersec \mathcal{U}
\end{displaymath}
where this last equality is (4) of Proposition \ref{prop_abs_supp_prop}.  Thus $\supp l$ is specialization closed in $\mathcal{U}\intersec \s\mathcal{K}$ as $\supp k$ is specialization closed in $\s\mathcal{K}$.
\end{proof}

The next lemma is the key to our theorem on the relative telescope conjecture for good actions. Before stating and proving it we recall from \cite{BaSpec} Proposition 2.9 that the space $\Spc \mathcal{T}^c$ is $T_0$; given points $x,y \in \Spc \mathcal{T}^c$ we have $x = y$ if and only if $\mathcal{V}(x) = \mathcal{V}(y)$. In fact $\Spc\mathcal{T}^c$ is spectral in the sense of Hochster \cite{HochsterSpectral} so every irreducible closed subset has a unique generic point.

\begin{lem}\label{lem_imm_spec_closed}
Suppose the support of any compact object of $\mathcal{K}$ is a specialization closed subset of $\s\mathcal{K}$ and that for each irreducible closed subset $\mathcal{V} \cie \Spc\mathcal{T}^c$ there exists a compact object of $\mathcal{K}$ whose support is precisely $\mathcal{V}\intersec \s\mathcal{K}$. If $x$ and $y$ are distinct points of $\s \mathcal{K}$ with $y\in \mathcal{V}(x)$ then 
\begin{displaymath}
\langle \mathit{\Gamma}_{y'}\mathcal{K} \; \vert \; y' \in (\mathcal{V}(x)\intersec \mathcal{U}(y))\setminus \{x\}\rangle_\mathrm{loc} \notcie (\mathit{\Gamma}_x\mathcal{K})^\perp
\end{displaymath}
where $\mathcal{U}(y) = \{y' \in \Spc \mathcal{T}^c \; \vert \; y\in \mathcal{V}(y')\}$ is the complement of $\mathcal{Z}(y)$.
\end{lem}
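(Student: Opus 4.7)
The plan is to argue by contradiction. Denote the localizing subcategory on the left by $\mathcal{L}$, and suppose $\mathcal{L}\cie(\mathit{\Gamma}_x\mathcal{K})^\perp$. The goal is to produce a compact object of $L_{\mathcal{Z}(y)}\mathcal{K}$ whose support is not specialization closed in $\mathcal{U}(y)\intersec\s\mathcal{K}$, contradicting Lemma \ref{lem_quot_supp_closed}.

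First I would use the hypothesis on $\mathcal{K}$ to choose a compact object $k \in \mathcal{K}^c$ with $\supp k = \mathcal{V}(x)\intersec\s\mathcal{K}$, and consider the canonical localization triangle
\begin{displaymath}
A \to L_{\mathcal{Z}(y)}k \to B \to \S A,
\end{displaymath}
where $A := \mathit{\Gamma}_{\mathcal{Z}(x)}L_{\mathcal{Z}(y)}k$ and $B := L_{\mathcal{Z}(x)}L_{\mathcal{Z}(y)}k$. Two applications of the separation axiom in Proposition \ref{prop_abs_supp_prop}(4), combined with the identity $\mathcal{V}(x)\intersec\mathcal{Z}(x) = \mathcal{V}(x)\setminus\{x\}$ (which uses the $T_0$ axiom for $\Spc\mathcal{T}^c$), give
\begin{displaymath}
\supp A = \bigl((\mathcal{V}(x)\intersec\mathcal{U}(y))\setminus\{x\}\bigr)\intersec\s\mathcal{K}, \qquad \supp B = \{x\}.
\end{displaymath}
By the local-to-global principle (Theorem \ref{thm_general_ltg}), $A$ then lies in $\mathcal{L}$ and $B$ lies in $\mathit{\Gamma}_x\mathcal{K}$; moreover $B$ is nonzero because $\supp B$ is.

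The assumption $\mathcal{L}\cie(\mathit{\Gamma}_x\mathcal{K})^\perp$ forces $\Hom(B,\S A)=0$, so the connecting morphism vanishes and the triangle splits, giving $L_{\mathcal{Z}(y)}k \iso A \oplus B$. Since the smashing inclusion $L_{\mathcal{Z}(y)}\mathcal{K}\hookrightarrow\mathcal{K}$ preserves coproducts, its left adjoint $L_{\mathcal{Z}(y)}$ preserves compactness (as in the proof of Lemma \ref{lem_quot_supp_closed}), so $L_{\mathcal{Z}(y)}k$ is compact in $L_{\mathcal{Z}(y)}\mathcal{K}$, and the direct summand $B$ is compact there too. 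Lemma \ref{lem_quot_supp_closed} then demands that $\supp B = \{x\}$ be specialization closed in $\mathcal{U}(y)\intersec\s\mathcal{K}$. However, the closure of $\{x\}$ in this subspace is $\mathcal{V}(x)\intersec\mathcal{U}(y)\intersec\s\mathcal{K}$, and this properly contains $\{x\}$ because $y$ is a distinct point of $\mathcal{V}(x)\intersec\mathcal{U}(y)\intersec\s\mathcal{K}$ by hypothesis. This contradiction establishes the lemma.

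The main subtlety I expect is finding a single compact-object construction that simultaneously yields an object of $\mathcal{L}$ and an object of $\mathit{\Gamma}_x\mathcal{K}$ sitting in a canonical triangle—that is what enforces the splitting. Once the right twofold acyclization-localization of $k$ is identified, everything else is bookkeeping with the support calculus and a clean appeal to Lemma \ref{lem_quot_supp_closed} in the smashing quotient $L_{\mathcal{Z}(y)}\mathcal{K}$.
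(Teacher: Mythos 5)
Your argument is correct and is essentially the paper's own proof: you choose the same compact object $k$, form the same $\mathit{\Gamma}_{\mathcal{Z}(x)}/L_{\mathcal{Z}(x)}$ localization triangle for $L_{\mathcal{Z}(y)}k$, compute the same supports, use the local-to-global principle to force the connecting map to vanish, and contradict Lemma \ref{lem_quot_supp_closed} via the failure of $\{x\}$ to be specialization closed in $\mathcal{U}(y)\intersec\s\mathcal{K}$. The only cosmetic differences are the abbreviations $A$, $B$ and the explicit observation that $y$ witnesses the non-closure of $\{x\}$.
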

\begin{proof}
By hypothesis there is a compact object $k$ of $\mathcal{K}$ satisfying
\begin{displaymath}
\supp k = \mathcal{V}(x)\intersec \s\mathcal{K}.
\end{displaymath}
The object $L_{\mathcal{Z}(y)}k$ is compact in $L_{\mathcal{Z}(y)}\mathcal{K}$ and has support
\begin{displaymath}
\supp L_{\mathcal{Z}(y)}k = (\supp k) \intersec (\Spc \mathcal{T}^c \setminus \mathcal{Z}(y))\intersec \s\mathcal{K} = \mathcal{V}(x) \intersec \mathcal{U}(y)\intersec \s\mathcal{K}
\end{displaymath}
by Proposition \ref{prop_abs_supp_prop}.

Suppose for a contradiction that
\begin{displaymath}
\langle \mathit{\Gamma}_{y'}\mathcal{K} \; \vert \; y' \in (\mathcal{V}(x)\intersec \mathcal{U}(y))\setminus \{x\}\rangle_\mathrm{loc} \cie (\mathit{\Gamma}_x\mathcal{K})^\perp.
\end{displaymath}
Consider the localization triangle for $L_{\mathcal{Z}(y)}k$
\begin{displaymath}
\mathit{\Gamma}_{\mathcal{Z}(x)} L_{\mathcal{Z}(y)}k \to L_{\mathcal{Z}(y)}k \to L_{\mathcal{Z}(x)}L_{\mathcal{Z}(y)}k \to \S\mathit{\Gamma}_{\mathcal{Z}(x)}L_{\mathcal{Z}(y)}k.
\end{displaymath}
We have, via Proposition \ref{prop_abs_supp_prop},
\begin{displaymath}
\supp L_{\mathcal{Z}(x)}L_{\mathcal{Z}(y)}k = \mathcal{U}(x) \intersec \mathcal{V}(x) \intersec \mathcal{U}(y) \intersec \s\mathcal{K} = \{x\}
\end{displaymath}
and
\begin{displaymath}
\supp \S\mathit{\Gamma}_{\mathcal{Z}(x)}L_{\mathcal{Z}(y)}k = \mathcal{Z}(x) \intersec \mathcal{V}(x) \intersec \mathcal{U}(y) \intersec \s\mathcal{K} = (\mathcal{V}(x) \intersec \mathcal{U}(y)\intersec \s\mathcal{K})\setminus \{x\}.
\end{displaymath}
So, as the local-to-global principle holds, the morphism $L_{\mathcal{Z}(x)}L_{\mathcal{Z}(y)}k \to \S\mathit{\Gamma}_{\mathcal{Z}(x)}L_{\mathcal{Z}(y)}k$ must be zero by our orthogonality assumption. This forces the triangle to split giving
\begin{displaymath}
L_{\mathcal{Z}(y)}k \iso L_{\mathcal{Z}(x)}L_{\mathcal{Z}(y)}k \oplus \mathit{\Gamma}_{\mathcal{Z}(x)}L_{\mathcal{Z}(y)}k.
\end{displaymath}
As $L_{\mathcal{Z}(y)}k$ is compact in $L_{\mathcal{Z}(y)}\mathcal{K}$ it follows that $L_{\mathcal{Z}(x)}L_{\mathcal{Z}(y)}k$ must also be compact. But we have already seen that the support of $L_{\mathcal{Z}(x)}L_{\mathcal{Z}(y)}k$ is $\{x\}$ which is not specialization closed in $\mathcal{U}(y)\intersec \s\mathcal{K}$. This yields a contradiction as by Lemma \ref{lem_quot_supp_closed} the compact objects in $L_{\mathcal{Z}(y)}\mathcal{K}$ have specialization closed support in $\mathcal{U}(y)\intersec \s\mathcal{K}$.
\end{proof}

\begin{lem}
Let $\mathcal{S}$ be a smashing $\mathcal{T}$-submodule of $\mathcal{K}$. Then 
\begin{displaymath}
\s \mathcal{S} \un \s \mathcal{S}^\perp = \s \mathcal{K} \quad \text{and} \quad \s \mathcal{S} \intersec \s \mathcal{S}^\perp = \varnothing.
\end{displaymath}
\end{lem}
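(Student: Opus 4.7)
The plan is to handle the union and intersection statements separately, both reducing to direct arguments using the smashing localization, the local-to-global set-up, and the bijection of Hypothesis \ref{hyp_tele}.

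For the union, the containment $\s\mathcal{S} \cup \s\mathcal{S}^\perp \subseteq \s\mathcal{K}$ is immediate since $\s$ is order preserving on inclusions. For the converse, fix $x \in \s\mathcal{K}$ and choose $A \in \mathcal{K}$ with $\mathit{\Gamma}_x A \neq 0$. Since $\mathcal{S}$ is smashing, the associated localization triangle
\begin{displaymath}
\mathit{\Gamma} A \to A \to L A \to \S \mathit{\Gamma} A
\end{displaymath}
has $\mathit{\Gamma} A \in \mathcal{S}$ and $L A \in \mathcal{S}^\perp$. Applying the exact functor $\mathit{\Gamma}_x\mathbf{1}*(-)$ gives a triangle whose middle term is nonzero, so at least one of $\mathit{\Gamma}_x\mathit{\Gamma} A$ or $\mathit{\Gamma}_x L A$ is nonzero, placing $x$ in $\s\mathcal{S}$ or $\s\mathcal{S}^\perp$ respectively.

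For the intersection being empty, suppose toward a contradiction that $x \in \s\mathcal{S} \cap \s\mathcal{S}^\perp$. By Lemma \ref{lem_smashing_orthogideal} the subcategory $\mathcal{S}^\perp$ is a localizing $\mathcal{T}$-submodule, so picking $B \in \mathcal{S}^\perp$ with $\mathit{\Gamma}_x B \neq 0$ we have $\mathit{\Gamma}_x B \in \mathcal{S}^\perp$. Now I would compute the support of $\mathit{\Gamma}_x B$: for any $y \neq x$, associativity and the second part of Corollary \ref{cor_supp_point1} give $\mathit{\Gamma}_y \mathit{\Gamma}_x B \iso (\mathit{\Gamma}_y\mathbf{1} \otimes \mathit{\Gamma}_x\mathbf{1})*B \iso 0$, while $\mathit{\Gamma}_x\mathit{\Gamma}_x B \iso \mathit{\Gamma}_x B \neq 0$ by idempotency. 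Thus $\supp \mathit{\Gamma}_x B = \{x\} \subseteq \s\mathcal{S}$. Invoking Hypothesis \ref{hyp_tele} then forces $\mathit{\Gamma}_x B \in \t(\s\mathcal{S}) = \mathcal{S}$, so $\mathit{\Gamma}_x B \in \mathcal{S} \cap \mathcal{S}^\perp = 0$, the desired contradiction.

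The argument is essentially formal once the pieces are assembled, and I do not expect any serious obstacle; the only subtlety is recognising that the characterisation $\mathcal{S} = \t(\s\mathcal{S})$ provided by Hypothesis \ref{hyp_tele}, combined with the tensor-orthogonality of the idempotents $\mathit{\Gamma}_x\mathbf{1}$ at distinct points, is exactly what lets one trap $\mathit{\Gamma}_x B$ simultaneously in $\mathcal{S}$ and $\mathcal{S}^\perp$.
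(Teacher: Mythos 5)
Your proof is correct and follows essentially the same route as the paper's: both use the functorial localization triangle associated to the smashing subcategory $\mathcal{S}$ together with exactness of $\mathit{\Gamma}_x\mathbf{1}*(-)$ for the union, and both derive the disjointness from the bijection of Hypotheses \ref{hyp_tele} (together with $\supp\mathit{\Gamma}_x(-) \subseteq \{x\}$ and Lemma \ref{lem_smashing_orthogideal}) to trap a nonzero object in $\mathcal{S} \cap \mathcal{S}^\perp = 0$. The only cosmetic difference is that you argue the union directly from the triangle while the paper phrases it contrapositively, and you work with a witness object $B$ where the paper speaks of $\mathit{\Gamma}_x\mathcal{K}$ as a subcategory; the substance is identical.
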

\begin{proof}
Suppose $x$ is a point of $\s\mathcal{K}$ satisfying $x \in \s \mathcal{S} \intersec \s \mathcal{S}^\perp$. Then as we have assumed $\s$ and $\t$ are inverse bijections and $\mathcal{S}^\perp$ is a localizing submodule by Lemma \ref{lem_smashing_orthogideal} we would have
\begin{displaymath}
\mathit{\Gamma}_x \mathcal{K} \cie \mathcal{S} \intersec \mathcal{S}^\perp = 0.
\end{displaymath}
This contradicts $x\in \s \mathcal{K}$ as $x$ is a point of $\s \mathcal{K}$ if and only if $\mathit{\Gamma}_x \mathcal{K} \neq 0$.

We now show that every point of $\s\mathcal{K}$ lies in either $\s \mathcal{S}$ or $\s \mathcal{S}^\perp$. Let $x$ be a point of $\s \mathcal{K}$ and suppose $x\notin \s \mathcal{S}^\perp$. In particular $\mathit{\Gamma}_x\mathcal{K} \notcie \mathcal{S}^\perp$ so there is an object $X$ of $\mathit{\Gamma}_x\mathcal{K}$ with $\mathit{\Gamma}_\mathcal{S}X\neq 0$ where $\mathit{\Gamma}_\mathcal{S}$ is the acyclization functor associated to $\mathcal{S}$. Consider the localization triangle for $X$ associated to $\mathcal{S}$
\begin{displaymath}
\mathit{\Gamma}_\mathcal{S}X \to X \to L_\mathcal{S} X \to \S\mathit{\Gamma}_\mathcal{S} X.
\end{displaymath}
Applying $\mathit{\Gamma}_x$ we get another triangle
\begin{displaymath}
\mathit{\Gamma}_x \mathit{\Gamma}_\mathcal{S}X \to \mathit{\Gamma}_x X \to \mathit{\Gamma}_x L_\mathcal{S} X \to \S \mathit{\Gamma}_x\mathit{\Gamma}_\mathcal{S} X.
\end{displaymath}
Since $x\notin \s\mathcal{S}^\perp$ we have $\mathit{\Gamma}_x L_\mathcal{S}X \iso 0$. Hence
\begin{displaymath}
0\neq X \iso \mathit{\Gamma}_x X \iso \mathit{\Gamma}_x \mathit{\Gamma}_\mathcal{S} X
\end{displaymath}
so $\mathit{\Gamma}_x\mathcal{S}$ is not the zero subcategory and $x\in \s\mathcal{S}$. 

\end{proof}

\begin{lem}\label{lem_smashing_supp_comm}
Suppose the support of any compact object of $\mathcal{K}$ is a specialization closed subset of $\s\mathcal{K}$ and that for each irreducible closed subset $\mathcal{V}$ in $\Spc \mathcal{T}^c$ there exists a compact object of $\mathcal{K}$ whose support is precisely $\mathcal{V}\intersec \s\mathcal{K}$. Let $\mathcal{S} \cie \mathcal{K}$ be a smashing $\mathcal{T}$-submodule. Then the subset $\s\mathcal{S}$ is specialization closed in $\s\mathcal{K}$.
\end{lem}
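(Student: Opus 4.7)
The plan is to proceed by contradiction, combining the previous lemma (which gives $\sigma\mathcal{K} = \sigma\mathcal{S} \sqcup \sigma\mathcal{S}^\perp$) with Lemma \ref{lem_imm_spec_closed} and noetherian induction. Suppose the conclusion fails, so there exist $x \in \sigma\mathcal{S}$ and $y \in \sigma\mathcal{K} \cap \mathcal{V}(x)$ with $y \notin \sigma\mathcal{S}$. By the previous lemma we then have $y \in \sigma\mathcal{S}^\perp$, and in particular $y \neq x$ (the two subsets being disjoint). The key observation that will drive the argument is: whenever $a \in \sigma\mathcal{S}$ and $b \in \sigma\mathcal{S}^\perp$, then since $\mathit{\Gamma}_a\mathcal{K}\subseteq \mathcal{S}$ and $\mathit{\Gamma}_b\mathcal{K}\subseteq \mathcal{S}^\perp$, taking right orthogonals yields $\mathit{\Gamma}_b\mathcal{K}\subseteq \mathcal{S}^\perp \subseteq (\mathit{\Gamma}_a\mathcal{K})^\perp$.

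The idea is then to build an infinite strictly decreasing chain of closed subsets, contradicting noetherianness of $\Spc\mathcal{T}^c$. Set $x_0 := x$. Inductively, assume $x_n \in \sigma\mathcal{S}$ with $y \in \mathcal{V}(x_n)$ and $y \neq x_n$. Apply Lemma \ref{lem_imm_spec_closed} to the pair $(x_n, y)$ to produce a point
\begin{displaymath}
x_{n+1} \in (\mathcal{V}(x_n) \cap \mathcal{U}(y)) \setminus \{x_n\}
\end{displaymath}
with $\mathit{\Gamma}_{x_{n+1}}\mathcal{K} \not\subseteq (\mathit{\Gamma}_{x_n}\mathcal{K})^\perp$. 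Since $\mathit{\Gamma}_{x_{n+1}}\mathcal{K} \neq 0$, the point $x_{n+1}$ lies in $\sigma\mathcal{K}$. By the key observation above, we cannot have $x_{n+1} \in \sigma\mathcal{S}^\perp$ (otherwise $\mathit{\Gamma}_{x_{n+1}}\mathcal{K}$ would sit inside $(\mathit{\Gamma}_{x_n}\mathcal{K})^\perp$, contradicting the conclusion of Lemma \ref{lem_imm_spec_closed}). Hence $x_{n+1} \in \sigma\mathcal{S}$, so in particular $x_{n+1} \neq y$. Moreover $x_{n+1} \in \mathcal{U}(y)$ forces $y \in \mathcal{V}(x_{n+1})$, so all the hypotheses needed to iterate are met.

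Finally, since $x_{n+1} \in \mathcal{V}(x_n) \setminus \{x_n\}$ and $\Spc\mathcal{T}^c$ is $T_0$ (as a spectral space), we get $\mathcal{V}(x_{n+1}) \subsetneq \mathcal{V}(x_n)$. This gives an infinite strictly descending chain of closed subsets of the noetherian space $\Spc\mathcal{T}^c$, the required contradiction. The main conceptual step is recognizing that Lemma \ref{lem_imm_spec_closed} combined with the smashing hypothesis (through Lemma \ref{lem_smashing_orthogideal} and the previous lemma, ensuring the dichotomy $\sigma\mathcal{K} = \sigma\mathcal{S}\sqcup\sigma\mathcal{S}^\perp$) forces any ``escape'' point to still lie in $\sigma\mathcal{S}$; the rest is a noetherian termination argument, which I expect to be straightforward once the iteration is correctly set up.
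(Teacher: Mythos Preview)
Your proof is correct. Both your argument and the paper's proceed by contradiction and hinge on the same two ingredients: the disjoint decomposition $\sigma\mathcal{K} = \sigma\mathcal{S} \sqcup \sigma\mathcal{S}^\perp$ from the previous lemma, and Lemma~\ref{lem_imm_spec_closed}. The difference lies in how noetherianness is deployed. The paper uses it once at the outset: it picks a point $x'$ of $\sigma\mathcal{S} \cap \mathcal{U}(y)$ that is \emph{maximal with respect to specialization}, so that every point of $(\mathcal{V}(x')\cap\mathcal{U}(y))\setminus\{x'\}$ lying in $\sigma\mathcal{K}$ already sits in $\sigma\mathcal{S}^\perp$; then a single application of Lemma~\ref{lem_imm_spec_closed} gives the contradiction directly, using the full statement about the localizing subcategory generated by all the $\mathit{\Gamma}_{y'}\mathcal{K}$. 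Your approach instead extracts from Lemma~\ref{lem_imm_spec_closed} a single point $x_{n+1}$ (legitimately, since $(\mathit{\Gamma}_{x_n}\mathcal{K})^\perp$ is localizing, so if all generators lay inside it the whole subcategory would), shows it must again belong to $\sigma\mathcal{S}$, and iterates to produce a strictly descending chain $\mathcal{V}(x_0)\supsetneq\mathcal{V}(x_1)\supsetneq\cdots$. The paper's route is marginally cleaner in that it invokes Lemma~\ref{lem_imm_spec_closed} only once and needs no extraction step; yours trades that for a more hands-on noetherian descent, which some readers may find more transparent. Both are sound.
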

\begin{proof}
We prove the lemma by contradiction. Let $x$ be a point of $\s\mathcal{S}$ and suppose $y$ is a point of $\mathcal{V}(x)\intersec \s\mathcal{K}$ which does not lie in $\s\mathcal{S}$. Then by the last lemma we must have $y\in \s\mathcal{S}^\perp$. We have assumed $\Spc \mathcal{T}^c$ is noetherian so there exists a point $x'$ of $\s\mathcal{S} \intersec \mathcal{U}(y)$ which is maximal with respect to specialization. We thus have 
\begin{displaymath}
((\mathcal{V}(x')\intersec \mathcal{U}(y))\setminus \{x'\})\intersec \s\mathcal{S} = \varnothing
\end{displaymath}
by virtue of the maximality of $x'$. From the previous lemma we deduce that every point of  $((\mathcal{V}(x')\intersec \mathcal{U}(y))\setminus \{x'\})$ lies in $\s\mathcal{S}^\perp$. As $\s$ and $\t$ are inverse there are containments 
\begin{displaymath}
\mathit{\Gamma}_{x'}\mathcal{K} \cie \mathcal{S} \quad \text{and} \quad \langle \mathit{\Gamma}_{y'}\mathcal{K} \; \vert \; y' \in (\mathcal{V}(x')\intersec \mathcal{U}(y))\setminus \{x'\}\rangle_* \cie \mathcal{S}^\perp
\end{displaymath}
the first as $x\in \s\mathcal{S}$ and the second by what we have just shown. Taking orthogonals in the first containment and combining we deduce that
\begin{displaymath}
\langle \mathit{\Gamma}_{y'}\mathcal{K} \; \vert \; y' \in (\mathcal{V}(x')\intersec \mathcal{U}(y))\setminus \{x'\}\rangle_* \cie \mathcal{S}^\perp \cie \mathit{\Gamma}_{x'}\mathcal{K}^\perp
\end{displaymath}
contradicting Lemma \ref{lem_imm_spec_closed} and completing the proof.


\end{proof}

\begin{thm}\label{thm_rel_tele}
Suppose the hypotheses of \ref{hyp_tele} hold, the support of any compact object of $\mathcal{K}$ is a specialization closed subset of $\s\mathcal{K}$ and that for each irreducible closed subset $\mathcal{V}$ of $\Spc\mathcal{T}^c$ there exists a compact object whose support is precisely $\mathcal{V}\intersec \s\mathcal{K}$. Then the relative telescope conjecture holds for $\mathcal{K}$ i.e., every smashing $\mathcal{T}$-submodule of $\mathcal{K}$ is generated, as a localizing subcategory, by compact objects of $\mathcal{K}$.
\end{thm}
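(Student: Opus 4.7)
The plan is to reduce the assertion to a direct application of Lemma \ref{lem_subset_subcat} and Proposition \ref{prop_action_generation}, using the key geometric input supplied by Lemma \ref{lem_smashing_supp_comm}. The smashing hypothesis on $\mathcal{S}$ enters only to guarantee, via that lemma, that the subset $\s\mathcal{S}\cie\s\mathcal{K}$ is specialization closed in the subspace topology; once this is in hand the proof is essentially a bookkeeping exercise.

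Concretely, given a smashing $\mathcal{T}$-submodule $\mathcal{S}\cie\mathcal{K}$, first apply Lemma \ref{lem_smashing_supp_comm} to conclude that $\s\mathcal{S}$ is specialization closed in $\s\mathcal{K}$. Next form
\begin{displaymath}
\mathcal{V}\;=\;\Un_{x\in\s\mathcal{S}}\overline{\{x\}}\;\cie\;\Spc\mathcal{T}^c,
\end{displaymath}
the smallest specialization closed subset of $\Spc\mathcal{T}^c$ containing $\s\mathcal{S}$. Since $\s\mathcal{S}$ is specialization closed in $\s\mathcal{K}$, any $y\in\mathcal{V}\intersec\s\mathcal{K}$ satisfies $y\in\mathcal{V}(x)$ for some $x\in\s\mathcal{S}$, and the specialization closure inside $\s\mathcal{K}$ forces $y\in\s\mathcal{S}$; conversely $\s\mathcal{S}\cie\mathcal{V}\intersec\s\mathcal{K}$ by construction. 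Thus $\mathcal{V}\intersec\s\mathcal{K}=\s\mathcal{S}$, which is exactly the hypothesis needed to invoke Lemma \ref{lem_subset_subcat} with $W=\mathcal{V}$ and $\mathcal{M}=\mathcal{S}$, yielding the crucial identification $\mathcal{T}_\mathcal{V}*\mathcal{K}=\mathcal{S}$, where $\mathcal{T}_\mathcal{V}=\{X\in\mathcal{T}\;\vert\;\supp X\cie\mathcal{V}\}$.

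The next step is to observe that $\mathcal{T}_\mathcal{V}$ coincides with $\mathit{\Gamma}_\mathcal{V}\mathcal{T}$ and in particular is generated as a localizing subcategory by compact objects of $\mathcal{T}$. The inclusion $\mathit{\Gamma}_\mathcal{V}\mathcal{T}\cie\mathcal{T}_\mathcal{V}$ follows from Proposition \ref{prop_abs_supp_prop}(4), while the reverse inclusion uses the local-to-global principle for the self-action of $\mathcal{T}$ (Theorem \ref{thm_general_ltg}(i)): any $X$ with $\supp X\cie\mathcal{V}$ lies in $\langle\mathit{\Gamma}_x X\;\vert\;x\in\mathcal{V}\rangle_\otimes\cie\mathit{\Gamma}_\mathcal{V}\mathcal{T}$. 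Since $\Spc\mathcal{T}^c$ is noetherian, $\mathcal{V}$ is Thomason and $\mathit{\Gamma}_\mathcal{V}\mathcal{T}$ is by definition generated by $\mathcal{T}^c_\mathcal{V}$.

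Finally, Lemma \ref{lem_rigid_goodaction} ensures the action restricts to compacts, so we may apply Proposition \ref{prop_action_generation} to the localizing $\otimes$-ideal $\mathcal{T}_\mathcal{V}$ (generated by objects of $\mathcal{T}^c$) and to the subcategory $\mathcal{K}$ itself (generated by $\mathcal{K}^c$): the submodule $\mathcal{T}_\mathcal{V}*\mathcal{K}$ is generated as a localizing subcategory by compact objects of $\mathcal{K}$. Combined with $\mathcal{T}_\mathcal{V}*\mathcal{K}=\mathcal{S}$, this is precisely the relative telescope conjecture. The real work has already been done in Lemma \ref{lem_smashing_supp_comm}; the main conceptual obstacle was the transfer from ``$\s\mathcal{S}$ is specialization closed in $\s\mathcal{K}$'' to ``$\mathcal{S}$ arises from a Thomason subset of the full spectrum,'' and this is resolved by the observation above that $\mathcal{V}\intersec\s\mathcal{K}=\s\mathcal{S}$.
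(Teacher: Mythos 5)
Your proof is correct and follows the same route as the paper: reduce to Lemma \ref{lem_smashing_supp_comm} to get specialization closure of $\s\mathcal{S}$, pick a specialization closed $W\cie\Spc\mathcal{T}^c$ with $W\intersec\s\mathcal{K}=\s\mathcal{S}$, apply Lemma \ref{lem_subset_subcat} to identify $\mathcal{S}=\mathcal{T}_W*\mathcal{K}$, and conclude by Proposition \ref{prop_action_generation}. You supply two small details the paper leaves implicit --- the explicit construction of $W$ as the specialization closure $\Un_{x\in\s\mathcal{S}}\overline{\{x\}}$ together with the check that it meets $\s\mathcal{K}$ in exactly $\s\mathcal{S}$, and the identification $\mathcal{T}_W=\mathit{\Gamma}_W\mathcal{T}$ (needed to see $\mathcal{T}_W$ is compactly generated) via the local-to-global principle --- but these are exactly the details the paper takes for granted, so the argument is the same.
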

\begin{proof}
Let $\mathcal{S}$ be a smashing submodule of $\mathcal{K}$. Recall from Lemma \ref{lem_subset_subcat} that there is an equality
\begin{equation}\label{eq_wtf}
\mathcal{T}_{W}*\mathcal{K} = \mathcal{S}
\end{equation}
for any $W\cie \Spc\mathcal{T}^c$ whose intersection with $\s\mathcal{K}$ is $\s\mathcal{S}$. By the lemma we have just proved the subset $\s\mathcal{S}$ is specialization closed in $\s\mathcal{K}$ so we can find a specialization closed subset $W$ of $\Spc\mathcal{T}^c$ with $W\intersec \s\mathcal{K} = \s\mathcal{S}$. As $W$ is specialization closed in $\Spc\mathcal{T}^c$ the tensor ideal $\mathcal{T}_{W}$ is generated by objects of $\mathcal{T}^c$. It then follows from the equality (\ref{eq_wtf}) that $\mathcal{S}$ is generated by objects of $\mathcal{K}^c$ - this last statement is the content of Proposition \ref{prop_action_generation}. 
\end{proof} 

\section{Working locally}\label{sec_locally}

We now show that the support theory we have developed is compatible with passing to quasi-compact open subsets of the spectrum; in particular, certain properties can be checked locally on an open cover.

Let $\mathcal{T}$ be a rigidly-compactly generated tensor triangulated category such that $\Spc\mathcal{T}^c$ is noetherian. We recall that, as $\Spc \mathcal{T}^c$ is noetherian, every open subset is quasi-compact. Let $U$ be an open subset with closed complement $Z$. There is an associated smashing localization sequence
\begin{displaymath}
\xymatrix{
\mathit{\Gamma}_Z\mathcal{T} = \mathcal{T}_Z \ar[r]<0.5ex>^(0.7){i_*} \ar@{<-}[r]<-0.5ex>_(0.7){i^!} & \mathcal{T} \ar[r]<0.5ex>^(0.3){p^*} \ar@{<-}[r]<-0.5ex>_(0.3){p_*} & L_Z\mathcal{T} = \mathcal{T}(U)
}
\end{displaymath}
where we have introduced the notation $\mathcal{T}(U)$ for the category on the right; we feel that this is worthwhile as when working locally it is better to keep open subsets in mind rather than their closed complements. Both $\mathcal{T}_Z$ and $\mathcal{T}(U)$ are tensor ideals and we recall that by definition
\begin{displaymath}
i_*i^! = \mathit{\Gamma}_Z\mathbf{1} \otimes(-) \quad \text{and} \quad p_*p^* = L_Z\mathbf{1}\otimes (-).
\end{displaymath}
By Thomason's localization theorem (see for example \cite{NeeGrot} Theorem 2.1) the subcategory of compact objects of $\mathcal{T}(U)$ is the idempotent completion of $\mathcal{T}^c/\mathcal{T}^c_Z$ i.e., it is precisely the subcategory $\mathcal{T}^c(U)$ of Balmer. By \cite{BaFilt} Proposition 2.15 the category $\mathcal{T}^c(U)$ is a rigid tensor category and so $\mathcal{T}(U)$ is a rigidly-compactly generated tensor triangulated category. We also wish to remind the reader that $\Spc\mathcal{T}^c(U)$ is naturally isomorphic to $U$ by \cite{BaGlue} Proposition 1.11. The quotient functor $p^*$ is monoidal and we will denote by $\mathbf{1}_U$ the tensor unit $p^*\mathbf{1}$ of $\mathcal{T}(U)$. 

We will use the notation introduced above throughout this section and it will be understood that $U$ carries the subspace topology. The category $\mathcal{T}(U)$ acts on itself giving rise to a support theory; in order to avoid confusion we will include $\mathbf{1}_U$ in the notation for acyclization, localization, and support functors this gives rise to, $\mathcal{T}(U)$ in the notation for the associated subcategories, and write the support as $\supp_{\mathcal{T}(U)}$.

Let us now recall that $p^*$ behaves nicely with respect to tensor idempotents in $\mathcal{T}$.

\begin{lem}\label{lem_idemp_proj}
Let $\mathcal{V}\cie \Spc \mathcal{T}^c$ be specialization closed. Then
\begin{displaymath}
p^*\mathit{\Gamma}_\mathcal{V}\mathbf{1} \iso \mathit{\Gamma}_{\mathcal{V}\intersec U}\mathbf{1}_U \quad \text{and} \quad p^*L_\mathcal{V}\mathbf{1} \iso L_{\mathcal{V}\intersec U}\mathbf{1}_U.
\end{displaymath}
\end{lem}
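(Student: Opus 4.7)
The plan is to apply the monoidal exact functor $p^*$ to the localization triangle $\mathit{\Gamma}_\mathcal{V}\mathbf{1} \to \mathbf{1} \to L_\mathcal{V}\mathbf{1} \to \S\mathit{\Gamma}_\mathcal{V}\mathbf{1}$ in $\mathcal{T}$ and then to identify the resulting triangle in $\mathcal{T}(U)$ with the analogous localization triangle defining $\mathit{\Gamma}_{\mathcal{V}\intersec U}\mathbf{1}_U$ and $L_{\mathcal{V}\intersec U}\mathbf{1}_U$. Since $p^*\mathbf{1} = \mathbf{1}_U$ and $p^*$ commutes with suspension, it suffices by uniqueness of such triangles (cf.\ the argument in Lemma \ref{lem_action_transfer}) to verify the two memberships
\begin{align*}
&(a)\; p^*\mathit{\Gamma}_\mathcal{V}\mathbf{1} \in \mathcal{T}(U)_{\mathcal{V}\intersec U}, \\
&(b)\; p^*L_\mathcal{V}\mathbf{1} \in \mathcal{T}(U)_{\mathcal{V}\intersec U}^\perp.
\end{align*}

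For (a) I would apply Lemma \ref{lem_general_closure} to the single exact coproduct preserving functor $p^*$. A set of compact generators for $\mathcal{T}_\mathcal{V}$ is given by the $t\in \mathcal{T}^c$ with $\supp t \cie \mathcal{V}$; under the canonical homeomorphism $\Spc\mathcal{T}^c(U) \iso U$ recalled before the lemma, each such $p^*t$ is compact in $\mathcal{T}(U)$ with $\supp p^*t = (\supp t)\intersec U \cie \mathcal{V}\intersec U$. Thus $p^*t \in \mathcal{T}(U)^c_{\mathcal{V}\intersec U}$, and Lemma \ref{lem_general_closure} transports this containment to $\mathit{\Gamma}_\mathcal{V}\mathbf{1}$, which is an object of the localizing subcategory generated by these compacts.

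For (b), take any $c \in \mathcal{T}(U)^c$ with $\supp c \cie \mathcal{V}\intersec U$. By Thomason's localization theorem choose $t''\in \mathcal{T}^c$ with $p^*t'' \iso c \oplus \S c$; the support identification of (a) applied in reverse then forces $\supp t'' \cie \mathcal{V} \un Z$, so that $t'' \in \mathcal{T}_{\mathcal{V}\un Z}$. Since $p^*$ is left adjoint to the fully faithful inclusion $p_*\colon \mathcal{T}(U) \hookrightarrow \mathcal{T}$ with essential image $L_Z\mathcal{T}$, we have $p_*p^*\iso L_Z\mathbf{1}\otimes(-)$, hence
\[
\Hom_{\mathcal{T}(U)}(p^*t'', p^*L_\mathcal{V}\mathbf{1}) \iso \Hom_\mathcal{T}(t'', L_Z\mathbf{1}\otimes L_\mathcal{V}\mathbf{1}) \iso \Hom_\mathcal{T}(t'', L_{\mathcal{V}\un Z}\mathbf{1}) = 0,
\]
using the identity $L_Z\mathbf{1}\otimes L_\mathcal{V}\mathbf{1}\iso L_{\mathcal{V}\un Z}\mathbf{1}$ from \cite{BaRickard} Proposition 3.11 together with tensor orthogonality. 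As $c$ is a direct summand of $p^*t''$ the desired vanishing of $\Hom(c,p^*L_\mathcal{V}\mathbf{1})$ follows, proving (b). The only step that is not entirely formal is recognising that the smashing localization $p_*p^*$ is computed by tensoring with the idempotent $L_Z\mathbf{1}$ and that the tensor product of two Rickard idempotents corresponds to the union of the associated specialization closed subsets; both are standard features of the Balmer--Favi formalism and are precisely what makes the identification work.
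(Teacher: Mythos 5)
Your proof is correct, but it differs from the paper in a structural way: the paper dispenses with this lemma in a single sentence by citing \cite{BaRickard} Corollary~6.5, whereas you supply a self-contained argument from first principles. Your strategy --- apply the exact monoidal functor $p^*$ to the localization triangle $\mathit{\Gamma}_\mathcal{V}\mathbf{1} \to \mathbf{1} \to L_\mathcal{V}\mathbf{1}$, then identify the image triangle with the one defining $\mathit{\Gamma}_{\mathcal{V}\cap U}\mathbf{1}_U$ and $L_{\mathcal{V}\cap U}\mathbf{1}_U$ by verifying the two membership conditions and invoking uniqueness of Bousfield triangles --- is exactly the standard mechanism behind the Balmer--Favi statement, so in effect you have reconstructed the proof they outsource. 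Both directions check out: part (a) correctly reduces to a generating set of $\mathcal{T}_\mathcal{V}$ and transports it via Lemma~\ref{lem_general_closure} and the formula $\supp_{\mathcal{T}^c(U)} p^*t = (\supp t)\cap U$; part (b) correctly uses Neeman's $c\oplus\Sigma c$ lifting, the adjunction $p^*\dashv p_*$, $p_*p^*\cong L_Z\mathbf{1}\otimes(-)$, and $L_Z\mathbf{1}\otimes L_\mathcal{V}\mathbf{1}\cong L_{\mathcal{V}\cup Z}\mathbf{1}$, with the noetherian hypothesis silently guaranteeing that $\mathcal{V}$ and $\mathcal{V}\cup Z$ are Thomason so the idempotents in question exist. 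One small imprecision: in the final sentence of (b), the vanishing $\Hom_\mathcal{T}(t'',L_{\mathcal{V}\cup Z}\mathbf{1})=0$ is not an instance of tensor-idempotent orthogonality but simply the orthogonality of the acyclics $\mathcal{T}_{\mathcal{V}\cup Z}$ (which contain $t''$) against the $L_{\mathcal{V}\cup Z}$-local objects, i.e.\ the defining property of a Bousfield localization; this does not affect the argument. The advantage of your route is that it makes the lemma transparent and keeps the paper self-contained, at the mild cost of re-deriving a result that is already on the shelf.
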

\begin{proof}
This is just a different way of stating \cite{BaRickard} Corollary 6.5.
\end{proof}

We next show the projection formula holds in this generality.

\begin{lem}\label{lem_projectionformula}
Suppose $X\in \mathcal{T}$ and $Y\in \mathcal{T}(U)$. Then there is an isomorphism
\begin{displaymath}
X\otimes p_*Y \iso p_*(p^*X\otimes Y).
\end{displaymath}
\end{lem}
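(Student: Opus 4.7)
The plan is to construct the canonical morphism by adjunction and then verify it is an isomorphism by reducing everything to the identification $p_*p^* \iso L_Z\mathbf{1}\otimes(-)$ coming from the smashing localization. First I would build the natural map
\begin{displaymath}
\varphi_{X,Y}\colon X \otimes p_*Y \longrightarrow p_*(p^*X \otimes Y)
\end{displaymath}
as the adjunct of the composite $p^*(X \otimes p_*Y) \iso p^*X \otimes p^*p_*Y \iso p^*X \otimes Y$, where the first isomorphism is strong monoidality of $p^*$ and the second is the counit, which is an isomorphism because $p_*$ is fully faithful.

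Next I would chase a chain of natural isomorphisms to see that source and target are abstractly isomorphic. Since $p_*Y$ lies in the essential image of $p_*$, it is an object of $\mathcal{T}(U) = L_Z\mathcal{T}$; equivalently the localization morphism $p_*Y \to L_Z\mathbf{1}\otimes p_*Y$ is an isomorphism. Tensoring with $X$ and using symmetry yields
\begin{displaymath}
X \otimes p_*Y \iso L_Z\mathbf{1} \otimes X \otimes p_*Y \iso p_*p^*(X \otimes p_*Y) \iso p_*(p^*X \otimes p^*p_*Y) \iso p_*(p^*X \otimes Y),
\end{displaymath}
where the penultimate step uses monoidality of $p^*$ and the last step uses the counit $p^*p_* \iso \id$. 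This already shows the two sides are isomorphic, but for the \emph{projection formula} one wants this composite to be $\varphi_{X,Y}$ itself, so the argument is really about naturality.

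The main obstacle, and really the only nontrivial piece of bookkeeping, is verifying that the canonical map $\varphi_{X,Y}$ coincides with the chain of isomorphisms just produced, rather than being an unrelated map between the same objects. This amounts to a short diagram chase using the triangle identities for the adjunction $(p^*, p_*)$ together with the compatibility of the counit with the monoidal structure of $p^*$; one writes the definition of $\varphi_{X,Y}$ as adjunct and unfolds. Once matched, $\varphi_{X,Y}$ is an isomorphism, giving the projection formula in the stated form.
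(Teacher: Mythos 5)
Your chain of isomorphisms is exactly the paper's proof (read in reverse): use $p_*Y \iso L_Z\mathbf{1}\otimes p_*Y$, the identification $p_*p^*\iso L_Z\mathbf{1}\otimes(-)$, monoidality of $p^*$, and the counit isomorphism $p^*p_*\iso\id$. The additional step of identifying this composite with the canonical adjunction map $\varphi_{X,Y}$ is not required by the statement, which only asserts the existence of an isomorphism, and the paper does not pursue it.
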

\begin{proof}
As $Y$ is in $\mathcal{T}(U)$ we have $p^*p_*Y \iso Y$ and hence
\begin{displaymath}
p_*Y \iso p_*p^*p_* Y \iso L_Z\mathbf{1}\otimes p_* Y.
\end{displaymath} 
From this we see
\begin{align*}
\mathit{\Gamma}_Z\mathbf{1} \otimes X\otimes p_*Y &\iso X\otimes \mathit{\Gamma}_Z\mathbf{1}\otimes p_*Y \\
&\iso X\otimes \mathit{\Gamma}_Z\mathbf{1} \otimes L_Z\mathbf{1} \otimes p_*Y \\
&\iso 0
\end{align*}
showing $X\otimes p_*Y$ is in the image of $p_*$. Using this we deduce that
\begin{align*}
p_*(p^*X\otimes Y) &\iso p_*(p^*X\otimes p^*p_*Y) \\
&\iso p_*p^*(X\otimes p_*Y) \\
&\iso L_Z\mathbf{1}\otimes X\otimes p_*Y \\
&\iso X\otimes p_*Y
\end{align*}
which is the claimed isomorphism.
\end{proof}

It follows easily from these facts that one can work locally when considering the subcategories $\mathit{\Gamma}_x\mathcal{T}$ for $x\in \Spc \mathcal{T}^c$. 

\begin{prop}\label{prop_point_locala}
For all $x\in U$ there is an isomorphism
\begin{displaymath}
p_*\mathit{\Gamma}_x\mathbf{1}_U \iso \mathit{\Gamma}_x \mathbf{1}.
\end{displaymath}
\end{prop}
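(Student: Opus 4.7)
My plan is to prove the isomorphism in two independent steps: (a) establishing $p^*\mathit{\Gamma}_x\mathbf{1} \iso \mathit{\Gamma}_x\mathbf{1}_U$ in $\mathcal{T}(U)$, and (b) showing $p_*p^*\mathit{\Gamma}_x\mathbf{1} \iso \mathit{\Gamma}_x\mathbf{1}$ in $\mathcal{T}$. Applying $p_*$ to (a) and combining with (b) yields the result. Since $\Spc\mathcal{T}^c$ is noetherian, $\mathcal{V}(x)$ and $\mathcal{Z}(x)$ are both Thomason and $\mathit{\Gamma}_x\mathbf{1} \iso \mathit{\Gamma}_{\mathcal{V}(x)}\mathbf{1} \otimes L_{\mathcal{Z}(x)}\mathbf{1}$, which is what makes both halves computable.

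For (a), I will first verify on the topology side that $\mathcal{V}_U(x) = \overline{\{x\}}_U = \mathcal{V}(x) \cap U$ and, using that $x \in U$ so that $x \in \overline{\{y\}}_U \iff x \in \overline{\{y\}}$, that $\mathcal{Z}_U(x) = \mathcal{Z}(x) \cap U$. Then, since $p^*$ is monoidal, Lemma \ref{lem_idemp_proj} applied to the two Thomason subsets $\mathcal{V}(x)$ and $\mathcal{Z}(x)$ gives
\begin{displaymath}
p^*\mathit{\Gamma}_x\mathbf{1} \iso p^*\mathit{\Gamma}_{\mathcal{V}(x)}\mathbf{1} \otimes p^*L_{\mathcal{Z}(x)}\mathbf{1} \iso \mathit{\Gamma}_{\mathcal{V}(x)\cap U}\mathbf{1}_U \otimes L_{\mathcal{Z}(x)\cap U}\mathbf{1}_U \iso \mathit{\Gamma}_x\mathbf{1}_U.
\end{displaymath}

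For (b), since $p_*p^*(-) \iso L_Z\mathbf{1} \otimes (-)$, it suffices to show $\mathit{\Gamma}_Z\mathbf{1} \otimes \mathit{\Gamma}_x\mathbf{1} \iso 0$. Using $\mathit{\Gamma}_x\mathbf{1} \iso \mathit{\Gamma}_{\mathcal{V}(x)}\mathbf{1}\otimes L_{\mathcal{Z}(x)}\mathbf{1}$ and \cite{BaRickard} Proposition 3.11 on intersections of closed subsets, we get $\mathit{\Gamma}_Z\mathbf{1} \otimes \mathit{\Gamma}_{\mathcal{V}(x)}\mathbf{1} \iso \mathit{\Gamma}_{Z \cap \mathcal{V}(x)}\mathbf{1}$. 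The key topological fact is that $Z \cap \mathcal{V}(x) \subseteq \mathcal{Z}(x)$: any $y \in Z \cap \mathcal{V}(x)$ is a specialization of $x$ with $y \neq x$ (as $x \in U$), so sobriety/$T_0$-ness of $\Spc\mathcal{T}^c$ forces $x \notin \overline{\{y\}}$, i.e.\ $y \in \mathcal{Z}(x)$. Hence $\mathit{\Gamma}_{Z \cap \mathcal{V}(x)}\mathbf{1}$ lies in $\mathcal{T}_{\mathcal{Z}(x)} = \ker(L_{\mathcal{Z}(x)}\mathbf{1}\otimes -)$, killing it after a further tensor with $L_{\mathcal{Z}(x)}\mathbf{1}$.

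The main obstacle is the purely topological bookkeeping in the two places above — correctly matching the local closure/generization data $\mathcal{V}_U(x), \mathcal{Z}_U(x)$ with the global ones, and verifying $Z \cap \mathcal{V}(x) \subseteq \mathcal{Z}(x)$. Both rely on nothing more than sobriety of $\Spc\mathcal{T}^c$ and $x \in U$, so once the setup is in place the categorical manipulations are formal consequences of Lemma \ref{lem_idemp_proj}, \cite{BaRickard} Proposition 3.11, and the identity $p_*p^* \iso L_Z\mathbf{1}\otimes(-)$.
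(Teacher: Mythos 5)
Your proof is correct and follows essentially the same route as the paper's: both factor the claim through $p^*\mathit{\Gamma}_x\mathbf{1}\iso\mathit{\Gamma}_x\mathbf{1}_U$ (via monoidality of $p^*$ and Lemma~\ref{lem_idemp_proj}) and then use $p_*p^*\iso L_Z\mathbf{1}\otimes(-)$ together with $L_Z\mathit{\Gamma}_x\mathbf{1}\iso\mathit{\Gamma}_x\mathbf{1}$. The only difference is one of exposition: the paper simply asserts $\mathit{\Gamma}_x\mathbf{1}\in L_Z\mathcal{T}$ as known, whereas you spell out the $T_0$ argument showing $Z\cap\mathcal{V}(x)\cie\mathcal{Z}(x)$, hence $\mathit{\Gamma}_Z\mathbf{1}\otimes\mathit{\Gamma}_x\mathbf{1}\iso 0$, which is a perfectly valid way to justify that step.
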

\begin{proof}
To see this is the case just note there are isomorphisms
\begin{align*}
p_*\mathit{\Gamma}_x\mathbf{1}_U &\iso p_*(\mathit{\Gamma}_{\mathcal{V}(x)\intersec U}\mathbf{1}_U \otimes L_{\mathcal{Z}(x)\intersec U}\mathbf{1}_U) \\
&\iso p_*(p^*\mathit{\Gamma}_{\mathcal{V}(x)}\mathbf{1} \otimes p^*L_{\mathcal{Z}(x)}\mathbf{1}) \\
&\iso p_*p^*(\mathit{\Gamma}_{\mathcal{V}(x)}\mathbf{1} \otimes L_{\mathcal{Z}(x)}\mathbf{1}) \\
&\iso L_Z\mathit{\Gamma}_x\mathbf{1} \\
&\iso \mathit{\Gamma}_x\mathbf{1}
\end{align*}
where we have used Lemma \ref{lem_idemp_proj} for the second isomorphism and the fact that $\mathit{\Gamma}_x\mathbf{1}\in L_Z\mathcal{T} = \mathcal{T}(U)$ for the final isomorphism.
\end{proof}

\begin{prop}\label{prop_point_localb}
For all $x\in U$ the functor $p_*$ induces an equivalence
\begin{displaymath}
\xymatrix{
\mathit{\Gamma}_x\mathcal{T} \ar[r]<0.75ex>^{p^*} \ar@{<-}[r]<-0.75ex>_{p_*} & \mathit{\Gamma}_x\mathcal{T}(U)
}.
\end{displaymath}
\end{prop}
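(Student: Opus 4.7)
The plan is to verify this equivalence by showing that each of $p^*$ and $p_*$ restricts appropriately and that the unit and counit of the adjunction $p^* \dashv p_*$ become isomorphisms on these full subcategories. First, observe that by applying $p^*$ to the isomorphism $p_*\mathit{\Gamma}_x\mathbf{1}_U \iso \mathit{\Gamma}_x\mathbf{1}$ of Proposition \ref{prop_point_locala} and using that $p^*p_* \iso \id_{\mathcal{T}(U)}$ (since $p_*$ is fully faithful), one gets $p^*\mathit{\Gamma}_x\mathbf{1} \iso \mathit{\Gamma}_x\mathbf{1}_U$. Alternatively this follows directly from Lemma \ref{lem_idemp_proj} applied to $\mathcal{V}(x)$ and $\mathcal{Z}(x)$, combined with the fact that $p^*$ is strong monoidal.

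Given this, if $X \iso \mathit{\Gamma}_x\mathbf{1} \otimes X' \in \mathit{\Gamma}_x\mathcal{T}$, then $p^*X \iso p^*\mathit{\Gamma}_x\mathbf{1} \otimes p^*X' \iso \mathit{\Gamma}_x\mathbf{1}_U \otimes p^*X'$ lies in $\mathit{\Gamma}_x\mathcal{T}(U)$. Conversely, if $Y \iso \mathit{\Gamma}_x\mathbf{1}_U \otimes Y' \in \mathit{\Gamma}_x\mathcal{T}(U)$, the projection formula (Lemma \ref{lem_projectionformula}) together with $\mathit{\Gamma}_x\mathbf{1}_U \iso p^*\mathit{\Gamma}_x\mathbf{1}$ gives
\begin{displaymath}
p_*Y \iso p_*(p^*\mathit{\Gamma}_x\mathbf{1} \otimes Y') \iso \mathit{\Gamma}_x\mathbf{1} \otimes p_*Y',
\end{displaymath}
which lies in $\mathit{\Gamma}_x\mathcal{T}$. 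Hence $p^*$ and $p_*$ do restrict to functors between the two subcategories.

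It remains to check the unit and counit restrict to natural isomorphisms. The counit $p^*p_*Y \to Y$ is an isomorphism for every $Y \in \mathcal{T}(U)$ because $p_*$ is fully faithful, so in particular for $Y \in \mathit{\Gamma}_x\mathcal{T}(U)$. For the unit $X \to p_*p^*X \iso L_Z\mathbf{1} \otimes X$, it suffices to show $L_Z\mathbf{1} \otimes X \iso X$ for every $X \in \mathit{\Gamma}_x\mathcal{T}$. Since $x \in U$ means $x \notin Z$, one has $\mathit{\Gamma}_Z\mathbf{1}\otimes \mathit{\Gamma}_x\mathbf{1} \iso 0$ (by tensor orthogonality/supports, using for example that $\supp(\mathit{\Gamma}_Z\mathbf{1}) = Z$ and $\supp(\mathit{\Gamma}_x\mathbf{1}) = \{x\}$), so the localization triangle $\mathit{\Gamma}_Z\mathbf{1}\otimes \mathit{\Gamma}_x\mathbf{1} \to \mathit{\Gamma}_x\mathbf{1} \to L_Z\mathbf{1}\otimes \mathit{\Gamma}_x\mathbf{1}$ identifies $L_Z\mathbf{1}\otimes \mathit{\Gamma}_x\mathbf{1} \iso \mathit{\Gamma}_x\mathbf{1}$. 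Writing $X \iso \mathit{\Gamma}_x\mathbf{1} \otimes X'$ and tensoring with $L_Z\mathbf{1}$ then yields $L_Z\mathbf{1}\otimes X \iso X$, as needed.

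No real obstacle is anticipated: every ingredient (Proposition \ref{prop_point_locala}, Lemma \ref{lem_idemp_proj}, the projection formula, and orthogonality of Rickard idempotents) has already been established. The only minor care is in identifying $p^*\mathit{\Gamma}_x\mathbf{1}$ with $\mathit{\Gamma}_x\mathbf{1}_U$, but this is immediate from the preceding proposition.
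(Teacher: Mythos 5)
Your proof is correct and takes essentially the same approach as the paper's: identify $p^*\mathit{\Gamma}_x\mathbf{1}\iso\mathit{\Gamma}_x\mathbf{1}_U$, use monoidality of $p^*$ and the projection formula to see both functors restrict with the right essential images, and invoke full faithfulness of $p_*$; you simply make the unit isomorphism explicit where the paper compresses it to the observation that $\mathit{\Gamma}_x\mathcal{T}\cie\im p_*$. One small caution in your last step: rather than invoking supports (which detects vanishing only under the model hypothesis, not assumed in this section), the vanishing $\mathit{\Gamma}_Z\mathbf{1}\otimes\mathit{\Gamma}_x\mathbf{1}\iso 0$ is more directly seen from the containment $Z\cie\mathcal{Z}(x)$ (valid because $Z$ is closed and $x\notin Z$), whence $\mathit{\Gamma}_Z\mathbf{1}\otimes L_{\mathcal{Z}(x)}\mathbf{1}\iso 0$.
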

\begin{proof}
The essential image of $p^*$ restricted to $\mathit{\Gamma}_x\mathcal{T}$ is $\mathit{\Gamma}_x\mathcal{T}(U)$ as we have isomorphisms
\begin{align*}
p^*(\mathit{\Gamma}_x\mathbf{1} \otimes X) &\iso p^*\mathit{\Gamma}_x\mathbf{1} \otimes p^*X \\
&\iso p^*p_*\mathit{\Gamma}_x\mathbf{1}_U \otimes p^*X \\
&\iso \mathit{\Gamma}_x\mathbf{1}_U \otimes p^*X
\end{align*}
where $X$ is any object of $\mathcal{T}$ and we have used the proposition we have just proved for the second isomorphism.

For $X$ in $\mathcal{T}$ we have, using the projection formula and Proposition \ref{prop_point_locala},
\begin{displaymath}
p_*(\mathit{\Gamma}_x\mathbf{1}_U \otimes p^*X) \iso p_*\mathit{\Gamma}_x\mathbf{1}_U \otimes X \iso  \mathit{\Gamma}_x\mathbf{1} \otimes X 
\end{displaymath}
showing the essential image of $p_*$ restricted to $\mathit{\Gamma}_x\mathcal{T}(U)$ is $\mathit{\Gamma}_x\mathcal{T}$.

Finally, as $p_*$ is fully faithful we have $p^*p_* \iso \id_{\mathcal{T}(U)}$ and $p_*p^* \iso \id_{\im p_*}$. From what we have just shown it is clear that this equivalence restricts to give the equivalence in the statement of the proposition.
\end{proof}

Let us now fix some action of $\mathcal{T}$ on a compactly generated triangulated category $\mathcal{K}$ and consider the relative version. For $U\cie \Spc \mathcal{T}^c$ as above we have a smashing localization sequence
\begin{displaymath}
\xymatrix{
\mathit{\Gamma}_Z\mathcal{K} \ar[r]<0.5ex>^(0.6){j_*} \ar@{<-}[r]<-0.5ex>_(0.6){j^!} & \mathcal{K} \ar[r]<0.5ex>^(0.3){q^*} \ar@{<-}[r]<-0.5ex>_(0.3){q_*} & L_Z\mathcal{K} = \mathcal{K}(U)
}
\end{displaymath}
by Lemma \ref{lem_action_transfer} and Corollary \ref{cor_transfer_cg}, where
\begin{displaymath}
j_*j^! = \mathit{\Gamma}_Z\mathbf{1}*(-) \quad \text{and} \quad q_*q^* = L_Z\mathbf{1}*(-).
\end{displaymath}
Our first observation is that $\mathcal{T}(U)$ acts on $\mathcal{K}(U)$ in a way which is compatible with the quotient functors.

\begin{prop}\label{prop_local_act}
There is an action $*_U$ of $\mathcal{T}(U)$ on $\mathcal{K}(U)$ defined by commutativity of the diagram
\begin{displaymath}
\xymatrix{
\mathcal{T}\times \mathcal{K} \ar[rr]^(0.4){p^*\times q^*} \ar[d]_{*} && \mathcal{T}(U) \times \mathcal{K}(U) \ar[d]^{*_U} \\
\mathcal{K} \ar[rr]_{q^*} && \mathcal{K}(U).
}
\end{displaymath}
\end{prop}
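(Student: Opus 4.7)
The plan is to construct $*_U$ by descent from $*$ along the Verdier localizations $p^{*}$ and $q^{*}$, then transport the action axioms. The first step is to identify the relevant vanishings. Recall that $\ker p^{*} = \mathcal{T}_Z$ and $\ker q^{*} = \mathcal{K}_Z = \mathit{\Gamma}_Z\mathcal{K}$. For $A\in \mathcal{K}_Z$ and any $X\in \mathcal{T}$ we have $X*A\in \mathcal{K}_Z$ because $\mathcal{K}_Z$ is a $\mathcal{T}$-submodule by Lemma \ref{lem_action_transfer1}. For $X\in \mathcal{T}_Z$ and any $A\in \mathcal{K}$, I would compute
\begin{displaymath}
q_*q^{*}(X*A) \iso L_Z\mathbf{1}*(X*A) \iso (L_Z\mathbf{1}\otimes X)*A \iso 0,
\end{displaymath}
using that $L_Z\mathbf{1}\otimes X \iso 0$ since $X\in \mathcal{T}_Z$. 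Hence the composite $q^{*}\circ *$ kills both $\mathcal{T}_Z\times\mathcal{K}$ and $\mathcal{T}\times\mathcal{K}_Z$.

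Second, I would use the fully faithful right adjoints $p_{*}$ and $q_{*}$ to write down the functor explicitly, setting $Y*_U B := q^{*}(p_*Y * q_*B)$. This is biexact and coproduct preserving in each variable because $p_{*}$, $q_{*}$, $q^{*}$ and $*$ all are. To check commutativity of the diagram in the statement, I would use the projection formula (Lemma \ref{lem_projectionformula}) and the idempotence $L_Z\mathbf{1}\otimes L_Z\mathbf{1}\iso L_Z\mathbf{1}$ to obtain, for $(X,A)\in \mathcal{T}\times \mathcal{K}$,
\begin{displaymath}
p^{*}X *_U q^{*}A = q^{*}\bigl((L_Z\mathbf{1}\otimes X)*(L_Z\mathbf{1}*A)\bigr) \iso q^{*}\bigl(L_Z\mathbf{1}*(X*A)\bigr) \iso q^{*}(X*A).
\end{displaymath}
Once this is in hand, the vanishings above guarantee that this description is independent of the choices of lifts $p_*Y$, $q_*B$, up to the canonical isomorphisms $p^{*}p_{*}\iso \id$ and $q^{*}q_{*}\iso \id$.

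Finally, I would transport the action structure. The unitor $l_A$ and associator $a_{X,Y,A}$ for $*$ descend under $q^{*}$ to give, using that $p^{*}$ is strong symmetric monoidal with $p^{*}\mathbf{1}=\mathbf{1}_U$, natural isomorphisms $l^U$ and $a^U$ for $*_U$ on pairs coming from $\mathcal{T}\times \mathcal{K}$; by the essential surjectivity of $p^{*}$ and $q^{*}$ (every object is $p^{*}p_*Y$ or $q^{*}q_*B$) these extend uniquely to $*_U$ globally. Conditions (1)--(3) of Definition \ref{defn_action} for $*_U$ are then deduced by applying $q^{*}$ to the corresponding coherence diagrams for $*$, which commute by hypothesis, and condition (4) follows from the preservation of coproducts by $p^{*}$, $q^{*}$, $p_{*}$, $q_{*}$ and biexactness of $*$. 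The only real care needed is in arguing that the extensions of $a^U$ and $l^U$ to arbitrary objects of $\mathcal{T}(U)$ and $\mathcal{K}(U)$ are well-defined natural isomorphisms; this is the main technical point, but it is forced by the one-variable-at-a-time universal property of Verdier quotients together with the vanishing $q^{*}(X*A)=0$ for $X\in \mathcal{T}_Z$ or $A\in \mathcal{K}_Z$ established in the first step.
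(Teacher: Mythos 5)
Your proposal is correct and follows essentially the same strategy as the paper: descend the action $*$ along the Verdier quotients using the smashing localization idempotents, verify well-definedness/compatibility via the formula $L_Z(X*A)\iso L_ZX * L_ZA$ (which in both cases reduces to associativity of the action and idempotence of $L_Z\mathbf{1}$), and then transport the coherence data. The only notational difference is that you define $*_U$ intrinsically as $q^*(p_*(-)*q_*(-))$ and then check it makes the square commute, whereas the paper defines it directly on representatives $p^*X *_U q^*A := q^*(X*A)$ and checks well-definedness; these are equivalent formulations of the same functor. One small inaccuracy: the isomorphism
\begin{displaymath}
(L_Z\mathbf{1}\otimes X)*(L_Z\mathbf{1}*A)\iso L_Z\mathbf{1}*(X*A)
\end{displaymath}
does not actually invoke Lemma~\ref{lem_projectionformula} (which concerns $p_*$ and $\otimes$ within $\mathcal{T}$, not the external action on $\mathcal{K}$); it follows directly from the associator for $*$ together with $L_Z\mathbf{1}\otimes L_Z\mathbf{1}\iso L_Z\mathbf{1}$, which is what the paper uses implicitly. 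This does not affect the validity of the argument.
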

\begin{proof}
As in the diagram we define the action of $\mathcal{T}(U)$ on $\mathcal{K}(U)$ by setting, for $X\in \mathcal{T}$ and $A\in \mathcal{K}$,
\begin{displaymath}
p^*X *_U q^*A = q^*(X*A)
\end{displaymath}
and similarly for morphisms. This is well defined because, given $X'\in \mathcal{T}$, $A'\in \mathcal{K}$ with $p^*X\iso p^*X'$ and $q^*A \iso q^*A'$, then
\begin{align*}
q_*(p^*X *_U q^*A) &= q_*q^*(X*A) \\
&= L_Z(X*A)\\
&\iso L_ZX * L_ZA \\
&\iso L_ZX' * L_ZA' \\
&\iso q_*(p^*X' *_U q^*A')
\end{align*}
which implies $p^*X*_Uq^*A \iso p^*X'*_Uq^*A'$.

The associator and unitor are defined by the diagrams
\begin{displaymath}
\xymatrix{
(p^*X\otimes p^*Y)*_U q^*A \ar@{->}[d]_-\wr \ar[rr]^{a_U}_{\sim} && p^*X*_U(p^*Y*_U q^*A) \ar@{<-}[d]^-\wr \\
q^*((X\otimes Y)*A) \ar[rr]^\sim_{q^*a} && q^*(X*(Y*A))
}
\end{displaymath}
and
\begin{displaymath}
\xymatrix{
\mathbf{1}_U *_U q^*A \ar[r]^(0.6){l_U}_(0.6){\sim} \ar@{->}[d]_-\wr & q^*A \ar@{<-}[d]^-\wr \\
q^*(\mathbf{1}*A) \ar[r]^(0.6)\sim_(0.6){q^*l} & q^*A
}
\end{displaymath}
respectively for $X,Y\in \mathcal{T}$ and $A\in \mathcal{K}$. It is easily verified that $*_U$ fulfils the necessary conditions to be an action.
\end{proof}

We next prove the relative analogue of Proposition \ref{prop_point_localb}:

\begin{prop}\label{prop_point_local2}
For $x\in U$ there is an equivalence
\begin{displaymath}
\xymatrix{
\mathit{\Gamma}_x\mathcal{K} \ar[r]<0.75ex>^{q^*} \ar@{<-}[r]<-0.75ex>_{q_*} & \mathit{\Gamma}_x\mathcal{K}(U)
}.
\end{displaymath}
\end{prop}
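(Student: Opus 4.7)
The plan is to imitate the proof of Proposition \ref{prop_point_localb}, with the roles of $p_*$ and $\otimes$ taken over by $q_*$ and $*$. The crucial intermediate step will be an action-version of the projection formula: for $X \in \mathcal{T}$ and $B \in \mathcal{K}(U)$, I would like to show
\begin{displaymath}
q_*(p^*X *_U B) \iso X * q_*B.
\end{displaymath}
This follows by combining Proposition \ref{prop_local_act}, the fact that $q_*$ is fully faithful (so $q^*q_*\iso \id_{\mathcal{K}(U)}$, from Lemma \ref{lem_action_transfer}), and the identification $q_*q^* \iso L_Z\mathbf{1}*(-)$. Indeed, the compatibility of the actions gives $q^*(X*q_*B) \iso p^*X *_U q^*q_*B \iso p^*X *_U B$, and applying $q_*$ yields $q_*(p^*X *_U B) \iso q_*q^*(X*q_*B) \iso L_Z\mathbf{1} * (X * q_*B) \iso X * (L_Z\mathbf{1} * q_*B) \iso X*q_*B$, the last isomorphism because $L_Z\mathbf{1} * q_*B \iso q_*q^*q_*B \iso q_*B$ as $q_*B$ lies in $\mathcal{K}(U)$.

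Next I would verify that $q^*$ and $q_*$ carry the two subcategories into one another. If $A \iso \mathit{\Gamma}_x\mathbf{1}*A'$ lies in $\mathit{\Gamma}_x\mathcal{K}$, then by Proposition \ref{prop_local_act} and Proposition \ref{prop_point_locala}
\begin{displaymath}
q^*A \iso p^*\mathit{\Gamma}_x\mathbf{1} *_U q^*A' \iso \mathit{\Gamma}_x\mathbf{1}_U *_U q^*A',
\end{displaymath}
which is in $\mathit{\Gamma}_x\mathcal{K}(U)$ by definition, and moreover this shows every object of $\mathit{\Gamma}_x\mathcal{K}(U)$ is in the essential image of $q^*|_{\mathit{\Gamma}_x\mathcal{K}}$. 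Conversely, if $B \iso \mathit{\Gamma}_x\mathbf{1}_U *_U B'$ lies in $\mathit{\Gamma}_x\mathcal{K}(U)$, then using $\mathit{\Gamma}_x\mathbf{1}_U \iso p^*\mathit{\Gamma}_x\mathbf{1}$ and the projection formula from the first paragraph,
\begin{displaymath}
q_*B \iso q_*(p^*\mathit{\Gamma}_x\mathbf{1} *_U B') \iso \mathit{\Gamma}_x\mathbf{1} * q_*B',
\end{displaymath}
which is visibly in $\mathit{\Gamma}_x\mathcal{K}$.

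It remains to check that the restricted functors are mutually inverse. One direction is automatic: since $q_*$ is fully faithful, $q^*q_* \iso \id_{\mathcal{K}(U)}$ restricts to $\mathit{\Gamma}_x\mathcal{K}(U)$. For the other direction, note that $\mathit{\Gamma}_x\mathbf{1} \iso p_*\mathit{\Gamma}_x\mathbf{1}_U$ lies in the essential image of $p_*$, hence in $\mathcal{T}(U)$, so $L_Z\mathbf{1} \otimes \mathit{\Gamma}_x\mathbf{1} \iso \mathit{\Gamma}_x\mathbf{1}$. Therefore for $A \iso \mathit{\Gamma}_x\mathbf{1}*A'$ in $\mathit{\Gamma}_x\mathcal{K}$,
\begin{displaymath}
q_*q^*A \iso L_Z\mathbf{1} * (\mathit{\Gamma}_x\mathbf{1} * A') \iso (L_Z\mathbf{1} \otimes \mathit{\Gamma}_x\mathbf{1}) * A' \iso \mathit{\Gamma}_x\mathbf{1}*A' \iso A,
\end{displaymath}
which gives the equivalence. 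The only nontrivial step is establishing the projection formula; once that is available, the rest is a direct transcription of Proposition \ref{prop_point_localb}, so I do not anticipate further obstacles.
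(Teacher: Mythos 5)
Your proof is correct. The essential content agrees with the paper's, both resting on the identity $q^*(\mathit{\Gamma}_x\mathbf{1}*A)\iso\mathit{\Gamma}_x\mathbf{1}_U*_U q^*A$, which comes from Propositions \ref{prop_local_act} and \ref{prop_point_locala}. The difference is organizational: the paper's proof is shorter because it begins with the observation that $\mathit{\Gamma}_x\mathcal{K}$ is contained in the essential image of $q_*$ (this follows since $\mathit{\Gamma}_x\mathbf{1}\iso L_Z\mathbf{1}\otimes\mathit{\Gamma}_x\mathbf{1}$ puts $\mathit{\Gamma}_x\mathbf{1}*A$ in $L_Z\mathcal{K}$). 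Once this is noted, full faithfulness of $q^*$ on $\mathit{\Gamma}_x\mathcal{K}$ is automatic from the general formalism of the Bousfield localization $\mathcal{K}\to\mathcal{K}(U)$, so all that remains is the image computation, which is the single displayed identity above. You instead follow the template of Proposition \ref{prop_point_localb} more literally: you first establish the relative projection formula $q_*(p^*X*_U B)\iso X*q_*B$ (which the paper never states explicitly for the action on $\mathcal{K}$, only for $\mathcal{T}$ acting on itself in Lemma \ref{lem_projectionformula}), then explicitly verify that $q^*$ and $q_*$ exchange the two subcategories and that both composites are the identity. This is more work but has the virtue of making the relative projection formula available, which is a reusable tool; the paper's argument buys brevity by leaning on the abstract structure of the localization sequence. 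There is no gap in your argument.
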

\begin{proof}
The category $\mathit{\Gamma}_x\mathcal{K}$ is contained in $q_*\mathcal{K}(U)$ so $q^*$ is fully faithful when restricted to $\mathit{\Gamma}_x\mathcal{K}$. It just remains to note that for $A\in \mathcal{K}$
\begin{displaymath}
q^*(\mathit{\Gamma}_x\mathbf{1} * A) = p^*\mathit{\Gamma}_x\mathbf{1} *_U q^*A \iso \mathit{\Gamma}_x\mathbf{1}_U *_U q^*A
\end{displaymath}
so that $q^*\mathit{\Gamma}_x \mathcal{K} = \mathit{\Gamma}_x\mathcal{K}(U)$.
\end{proof}

\begin{rem}\label{rem_action_cover}
In particular, the last proposition implies that from an open cover $\Spc \mathcal{T}^c = \Un_{i=1}^n U_i$ we get an open cover
\begin{displaymath}
\s\mathcal{K} = \Un_{i=1}^n \s\mathcal{K}(U_i).
\end{displaymath}
\end{rem}

Now let us fix some cover $\Spc \mathcal{T}^c = \Un_{i=1}^n U_i$ by open subsets and denote the projections from $\mathcal{K}$ to $\mathcal{K}(U_i)$ by $q_i^*$. We will prove two results showing that one can deduce information about $\mathcal{K}$ from the corresponding statements for the $\mathcal{K}(U_i)$. First let us show that compact objects having (specialization) closed support is local in this sense.

\begin{lem}\label{lem_local_closed}
Suppose that for all $1\leq i \leq n$ and $a\in \mathcal{K}(U_i)$ compact the subset $\supp_{\mathcal{T}(U_i)}a$ is (specialization) closed in $U_i$. Then for all $b\in \mathcal{K}^c$ the subset $\supp b$ is (specialization) closed in $\Spc \mathcal{T}^c$.
\end{lem}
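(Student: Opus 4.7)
The plan is to reduce the global statement to the local hypotheses via Proposition~\ref{prop_point_local2} and then invoke the fact that both closedness and specialization closedness are local properties for an open cover.

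First I would fix $b\in \mathcal{K}^c$ and show that for each $i$ the image $q_i^*b\in \mathcal{K}(U_i)$ is compact. Indeed, the kernel $\mathit{\Gamma}_{Z_i}\mathcal{K}$ of $q_i^*$ is generated as a localizing subcategory by objects of $\mathcal{K}^c$ (Corollary~\ref{cor_transfer_cg}), so $q_i^*$ admits a coproduct-preserving right adjoint and hence preserves compactness.

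Next I would verify the identity $\supp b \cap U_i = \supp_{\mathcal{T}(U_i)} q_i^*b$. For $x\in U_i$, Proposition~\ref{prop_point_local2} provides an equivalence $\mathit{\Gamma}_x\mathcal{K} \simeq \mathit{\Gamma}_x\mathcal{K}(U_i)$ via $q_i^*$, and the computation in its proof gives $q_i^*(\mathit{\Gamma}_x\mathbf{1}*b) \iso \mathit{\Gamma}_x\mathbf{1}_{U_i} *_{U_i} q_i^*b$. Hence $\mathit{\Gamma}_xb = 0$ if and only if $\mathit{\Gamma}_x\mathbf{1}_{U_i}*_{U_i} q_i^*b = 0$, which is exactly the claimed equality. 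By the hypothesis applied to $q_i^*b\in \mathcal{K}(U_i)^c$, this intersection is (specialization) closed in $U_i$.

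Finally I would glue the local statements. For closedness this is the standard observation that closedness of a subset of a topological space can be checked by intersecting with the opens of any open cover. For specialization closedness the argument goes through because if $y\in \overline{\{x\}}$ in $\Spc\mathcal{T}^c$ and $y\in U_i$, then $x\in U_i$ as well---every open neighborhood of $y$ contains all of its generizations---so both points lie in the same chart $U_i$ and the local specialization-closedness of $\supp b\cap U_i$ yields $x\in \supp b$. The only non-routine point is precisely this last observation, and it will be the main (very minor) obstacle; once it is noted, everything assembles mechanically.
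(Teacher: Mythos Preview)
Your proposal is correct and follows essentially the same route as the paper: show $q_i^*b$ is compact, identify $\supp b \cap U_i$ with $\supp_{\mathcal{T}(U_i)} q_i^*b$, and glue. One small slip: in your final sentence you want to conclude $y\in \supp b$, not $x\in \supp b$; otherwise the argument is exactly right and in fact more detailed on the gluing step than the paper's own proof.
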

\begin{proof}
Let $b$ be compact in $\mathcal{K}$. Then
\begin{align*}
\supp b &= \Un_{i=1}^n (\supp b \intersec U_i) \\
&= \Un_{i=1}^n \{x\in U_i \; \vert \; \mathit{\Gamma}_x\mathbf{1}_U *_U q^*_i b\neq 0\} \\
&= \Un_{i=1}^n \supp_{\mathcal{T}(U_i)}q_i^* b
\end{align*}
as we have
\begin{displaymath}
\mathit{\Gamma}_x\mathbf{1}_U *_U q^*_ib = q^*_i(\mathit{\Gamma}_x b) \neq 0
\end{displaymath}
if and only if $x$ is in $\supp b \intersec U_i$. Now $q_i^*$ sends compacts to compacts as the associated localization is smashing, so by hypothesis each $\supp_{\mathcal{T}(U_i)}q^*_i b$ is (specialization) closed in $U_i$. Thus $\supp b$ is (specialization) closed in $\Spc \mathcal{T}^c$.
\end{proof}

\begin{rem}\label{rem_supp_local}
It is worth noting from the proof that for any $A\in \mathcal{K}$ there is an equality
\begin{displaymath}
\supp A = \Un_{i=1}^n \supp_{\mathcal{T}(U_i)} q_i^* A.
\end{displaymath}
\end{rem}

Finally we show it is also possible to check that $\s\mathcal{K}$ classifies localizing $\mathcal{T}$-submodules locally. It is easily seen that, provided $\mathcal{T}$ satisfies the local-to-global principle, a bijection between subsets of $\s\mathcal{K}$ and the collection of localizing submodules of $\mathcal{K}$ is equivalent to each of the $\mathit{\Gamma}_x\mathcal{K}$ being minimal in the following sense (cf.\ \cite{BIKstrat2} Section 4 and our Lemma \ref{general_im_tau}):

\begin{defn}\label{defn_min_subcat}
We say a localizing submodule $\mathcal{L}\cie \mathcal{K}$ is \emph{minimal} if it has no proper and non-trivial localizing submodules.
\end{defn}
By Proposition \ref{general_tau_inj} we have that $\s$ is left inverse to $\t$. To see $\t$ is an inverse to $\s$ one just needs to note that if the $\mathit{\Gamma}_x\mathcal{K}$ are minimal then the local-to-global principle completely determines any localizing submodule in terms of its support. In fact the converse is also true: such a bijection is easily seen to imply that the $\mathit{\Gamma}_x\mathcal{K}$ are minimal. Thus the following theorem should not come as a surprise.

\begin{thm}\label{thm_bijection_local}
Suppose $\mathcal{T}$ has a model and that there exists a cover $\Spc \mathcal{T}^c = \Un_i U_i$ for $i=1,\ldots,n$ such that the action of $\mathcal{T}(U_i)$ on $\mathcal{K}(U_i)$ yields bijections
\begin{displaymath}
\left\{ \begin{array}{c}
\text{subsets of}\; \s\mathcal{K}(U_i)
\end{array} \right\}
\xymatrix{ \ar[r]<1ex>^\t \ar@{<-}[r]<-1ex>_\s &} \left\{
\begin{array}{c}
\text{localizing submodules} \; \text{of} \; \mathcal{K}(U_i) \\
\end{array} \right\}.
\end{displaymath}
Then $\s$ and $\t$ give a bijection
\begin{displaymath}
\left\{ \begin{array}{c}
\text{subsets of}\; \s\mathcal{K}
\end{array} \right\}
\xymatrix{ \ar[r]<1ex>^\t \ar@{<-}[r]<-1ex>_\s &} \left\{
\begin{array}{c}
\text{localizing submodules} \; \text{of} \; \mathcal{K} \\
\end{array} \right\}.
\end{displaymath}
\end{thm}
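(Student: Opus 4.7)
The plan is to exploit the observation (made just before the theorem's statement) that, once the local-to-global principle is available, the bijection amounts to the single assertion that each $\mathit{\Gamma}_x\mathcal{K}$ for $x\in\s\mathcal{K}$ is a \emph{minimal} localizing $\mathcal{T}$-submodule, in the sense of Definition \ref{defn_min_subcat}. Since $\mathcal{T}$ has a model, Theorem \ref{thm_general_ltg} gives the relative local-to-global principle for the action on $\mathcal{K}$, and Proposition \ref{general_tau_inj} already yields $\s\t(W)=W$ for $W\cie\s\mathcal{K}$. Thus only the identity $\t\s(\mathcal{L})=\mathcal{L}$ needs to be established. Given a localizing submodule $\mathcal{L}$, one containment is clear; for the other, take $A\in\t\s\mathcal{L}$ and apply local-to-global: $\langle A\rangle_* = \langle \mathit{\Gamma}_x A\mid x\in\supp A\rangle_*$, so it suffices to know that $x\in\s\mathcal{L}$ forces $\mathit{\Gamma}_x\mathcal{K}\cie\mathcal{L}$. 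But $\mathcal{L}\cap\mathit{\Gamma}_x\mathcal{K}$ is a nonzero localizing submodule of $\mathit{\Gamma}_x\mathcal{K}$ whenever $x\in\s\mathcal{L}$, and minimality of $\mathit{\Gamma}_x\mathcal{K}$ then forces equality.

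Next I would upgrade the local hypothesis to local minimality. Fix $i$ and $x\in\s\mathcal{K}(U_i)$. Under the bijection for $\mathcal{K}(U_i)$, the submodules of $\t(\{x\})$ correspond to subsets of $\s\t(\{x\})=\{x\}$, so $\t(\{x\})$ has no proper nonzero localizing $\mathcal{T}(U_i)$-submodule. Since $\mathit{\Gamma}_x\mathcal{K}(U_i)$ is a nonzero localizing submodule contained in $\t(\{x\})$, the two coincide, and hence $\mathit{\Gamma}_x\mathcal{K}(U_i)$ is minimal.

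The heart of the proof is then transferring this local minimality back to $\mathcal{K}$. Given $x\in\s\mathcal{K}$, Remark \ref{rem_action_cover} produces some $U_i$ with $x\in\s\mathcal{K}(U_i)$, and Proposition \ref{prop_point_local2} provides an equivalence $q_i^*\colon\mathit{\Gamma}_x\mathcal{K}\stackrel{\sim}{\to}\mathit{\Gamma}_x\mathcal{K}(U_i)$. Let $\mathcal{N}\cie\mathit{\Gamma}_x\mathcal{K}$ be a nonzero localizing $\mathcal{T}$-submodule; I claim its image $q_i^*\mathcal{N}$ is a nonzero localizing $\mathcal{T}(U_i)$-submodule of $\mathit{\Gamma}_x\mathcal{K}(U_i)$. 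Exactness and coproduct-preservation of $q_i^*$, together with the fact that $q_i^*$ is an equivalence on $\mathit{\Gamma}_x\mathcal{K}$, ensure $q_i^*\mathcal{N}$ is localizing. The key compatibility is the diagram defining $*_U$ in Proposition \ref{prop_local_act}: for every $Y\in\mathcal{T}(U_i)$, essentially surjectivity of $p^*$ lets us write $Y\iso p^*X$, and then $Y*_U q_i^*A=q_i^*(X*A)\in q_i^*\mathcal{N}$ since $\mathcal{N}$ is a $\mathcal{T}$-submodule. By local minimality $q_i^*\mathcal{N}=\mathit{\Gamma}_x\mathcal{K}(U_i)$, and the equivalence transports this back to $\mathcal{N}=\mathit{\Gamma}_x\mathcal{K}$, proving minimality globally.

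The main obstacle I anticipate is the bookkeeping in this transfer step: one has to know that Proposition \ref{prop_point_local2} is an equivalence \emph{of $\mathcal{T}$-modules} in an appropriate sense, and in particular that $\mathcal{T}$-submodule closure on one side corresponds to $\mathcal{T}(U_i)$-submodule closure on the other. The surjectivity of $p^*\colon\mathcal{T}\to\mathcal{T}(U_i)$ on objects is what makes this clean, reducing the check to the defining commutative square of Proposition \ref{prop_local_act}. Everything else is a formal assembly of results already established in the paper.
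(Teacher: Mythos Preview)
Your proposal is correct and follows essentially the same approach as the paper: reduce to minimality of each $\mathit{\Gamma}_x\mathcal{K}$ via the local-to-global principle, pick a $U_i$ containing $x$, use Proposition \ref{prop_point_local2} to identify $\mathit{\Gamma}_x\mathcal{K}$ with $\mathit{\Gamma}_x\mathcal{K}(U_i)$, and transfer minimality using the compatibility diagram of Proposition \ref{prop_local_act}. You have simply spelled out more explicitly the steps that the paper leaves to the discussion preceding the theorem and to the phrase ``by hypothesis and by the diagram of Proposition \ref{prop_local_act}''.
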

\begin{proof}
By the discussion before the theorem it is sufficient to check that $\mathit{\Gamma}_x\mathcal{K}$ is minimal for each $x\in \s\mathcal{K}$. But for any such $x$ there exists an $i$ such that $x\in U_i$ and by Proposition \ref{prop_point_local2} the subcategory $\mathit{\Gamma}_x\mathcal{K}$ is equivalent to $\mathit{\Gamma}_x\mathcal{K}(U_i)$. This latter category is a minimal $\mathcal{T}(U_i)$-submodule by hypothesis and by the diagram of Proposition \ref{prop_local_act} this implies it is also minimal with respect to the action of $\mathcal{T}$.
\end{proof}

This machinery gives an easy proof of Corollary 4.13 of \cite{AJS3}. For a noetherian scheme $X$ let us denote by $D(X)$ derived category of $\str_X$-modules with quasi-coherent cohomology $D_{\QCoh}(\str_X$-$\Module)$. If $X$ is also separated this is equivalent to $D(\QCoh X)$ the derived category of quasi-coherent sheaves.

\begin{lem}\label{lem_open_loc_a}
Let $X$ be a noetherian scheme, let $U\cie X$ be an open set with complement $Z = X\setminus U$, and let $f\colon U\to X$ be the inclusion. If $E$ is an object of $D(X)$ then the map \mbox{$E\to \mathbf{R}f_*f^*E$} agrees with the localization map $E\to L_ZE$. In particular, $D(X)(U)$ is precisely $D(U)$.
\end{lem}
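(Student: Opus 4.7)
The plan is to recognize the triangle $E \to \mathbf{R}f_*f^*E \to C$ as the localization triangle for the smashing subcategory $\mathcal{T}_Z = D(X)_Z$. Recall that an open immersion $f\colon U\hookrightarrow X$ gives an adjoint pair $f^* \dashv \mathbf{R}f_*$ of exact functors between $D(X)$ and $D(U)$, and $\mathbf{R}f_*$ preserves quasi-coherence on a noetherian scheme. The key input is that $\mathbf{R}f_*$ is fully faithful: for any complex $F$ on $U$ the restriction $(\mathbf{R}f_*F)|_U$ is canonically isomorphic to $F$, so the counit $f^*\mathbf{R}f_* \to \id_{D(U)}$ is a natural isomorphism.

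First I would verify that $\mathbf{R}f_*f^*E$ lies in $\mathcal{T}_Z^\perp$. Any object $F\in \mathcal{T}_Z$ is generated from perfect complexes on $X$ supported on $Z$, whose restrictions to $U$ are zero; hence $f^*F = 0$ for every $F\in \mathcal{T}_Z$. By the $(f^*,\mathbf{R}f_*)$-adjunction,
\begin{displaymath}
\Hom_{D(X)}(F,\mathbf{R}f_*f^*E) \iso \Hom_{D(U)}(f^*F,f^*E) = 0,
\end{displaymath}
so indeed $\mathbf{R}f_*f^*E \in \mathcal{T}_Z^\perp$. Next I would check that the cone $C$ of $E\to \mathbf{R}f_*f^*E$ lies in $\mathcal{T}_Z$. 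Applying the exact functor $f^*$ to this triangle gives $f^*E \to f^*\mathbf{R}f_*f^*E \to f^*C$, and the first arrow is an isomorphism by full faithfulness of $\mathbf{R}f_*$. Hence $f^*C = 0$, i.e.\ all cohomology sheaves of $C$ are supported on $Z$. For a noetherian scheme, such complexes coincide with $\mathcal{T}_Z$: this is the standard fact that $\mathcal{T}_Z$ is exactly the localizing subcategory of $D(X)$ consisting of complexes with set-theoretic cohomological support in $Z$ (which can be deduced, for instance, from the classification of localizing subcategories of $D(X)$, or directly by writing $\mathcal{O}_Z$-approximations using Koszul complexes on local generators of a defining ideal).

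With both containments in hand, the pair of triangles
\begin{displaymath}
\mathit{\Gamma}_ZE \to E \to L_ZE \to \S \mathit{\Gamma}_ZE \quad \text{and} \quad C[-1] \to E \to \mathbf{R}f_*f^*E \to C
\end{displaymath}
are both localization triangles for the semi-orthogonal decomposition $(\mathcal{T}_Z, \mathcal{T}_Z^\perp)$ of $D(X)$. By uniqueness of such triangles the natural transformation $E \to \mathbf{R}f_*f^*E$ coincides with $E \to L_ZE$. For the final claim, $D(X)(U)$ is by definition the essential image of $L_Z$, which by what we have just shown equals the essential image of $\mathbf{R}f_*$; since $\mathbf{R}f_*\colon D(U)\to D(X)$ is fully faithful, this essential image is equivalent to $D(U)$, and one checks that the equivalence identifies the tensor units and the monoidal structures.

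The main obstacle is the identification of $\mathcal{T}_Z$ with the subcategory of complexes having cohomology supported on $Z$; everything else is a formal manipulation with the smashing localization sequence and the adjunction $f^*\dashv \mathbf{R}f_*$.
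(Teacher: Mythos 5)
Your proposal is correct and uses essentially the same argument as the paper, just unwound more explicitly: the paper simply observes that $D_Z(X)$ and $\ker f^*$ are both compactly generated by perfect complexes supported (in the Balmer sense, respectively the homological sense) on $Z$, cites \cite{BaSpec} Corollary~5.6 that the two supports agree on compacts, and concludes that the two Bousfield localizations—and hence the two maps out of $E$—coincide. You reach the same conclusion by directly verifying that $C = \cone(E\to\mathbf{R}f_*f^*E)$ lies in $\mathcal{T}_Z$ and that $\mathbf{R}f_*f^*E$ lies in $\mathcal{T}_Z^\perp$, then invoking uniqueness of localization triangles; the key nontrivial input you flag (the identification of $\mathcal{T}_Z$ with the complexes having cohomology supported on $Z$) is precisely the Thomason--Neeman compact generation statement plus the support comparison that the paper quotes.
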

\begin{proof}
By definition the smashing subcategory $D_Z(X)$ giving rise to $L_Z$ is the localizing subcategory generated by the compact objects whose support is contained in $Z$. The kernel of $f^*$ is the localizing subcategory generated by those compact objects whose homological support is contained in $Z$. As these two notions of support coincide for compact objects of $D(X)$ (see for example \cite{BaSpec} Corollary 5.6) the lemma follows immediately.
\end{proof}

\begin{cor}\label{cor_epicwin}
Let $X$ be a noetherian scheme. Then, letting $D(X)$ act on itself, the assignments $\s$ and $\t$ give a bijection between subsets of $X$ and localizing $\otimes$-ideals of $D(X)$.
\end{cor}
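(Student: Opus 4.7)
The plan is to deduce this corollary by combining the local structure of $D(X)$ with the gluing result of Theorem \ref{thm_bijection_local}. The key observation is that the sought-after classification is a \emph{local} statement about the spectrum $\Spc D(X)^c \iso X$, and it is known affine-locally by Neeman's classification theorem. So the work is reduced to verifying the hypotheses of Theorem \ref{thm_bijection_local}.

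First, choose a finite cover of $X$ by affine opens $U_i = \Spec R_i$, possible since $X$ is noetherian. By Lemma \ref{lem_open_loc_a}, the local category $D(X)(U_i)$ is canonically equivalent to $D(U_i) = D(R_i)$, and under this identification the induced action of $D(X)(U_i)$ on $\mathcal{K}(U_i) = D(X)(U_i)$ becomes the usual self-action of $D(R_i)$ by $\otimes^\mathbf{L}_{R_i}$. Moreover, since $D(R_i)$ is generated as a localizing subcategory by its tensor unit $R_i$, Lemma \ref{lem_gen_locpreserving} tells us that every localizing subcategory of $D(R_i)$ is automatically a $D(R_i)$-submodule, i.e.\ a localizing $\otimes$-ideal.

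Second, for each $i$, Neeman's classification theorem (in the form given for a commutative noetherian ring) provides an order-preserving bijection between subsets of $\Spec R_i$ and localizing subcategories of $D(R_i)$, and one verifies easily that under Balmer's identification $\Spc D(R_i)^c \iso \Spec R_i$ these assignments coincide with $\s$ and $\t$. Since each residue field $k(x)$ for $x \in \Spec R_i$ has non-trivial $\mathit{\Gamma}_x$-component, we have $\s D(R_i) = \Spec R_i = U_i$. Thus the local bijections of the form required by Theorem \ref{thm_bijection_local} are in place.

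Third, apply Theorem \ref{thm_bijection_local}: the hypotheses that $D(X)$ has a monoidal model (take, e.g., the standard model structure on complexes of $\str_X$-modules) and that $\Spc D(X)^c \iso X$ is noetherian are immediate, and the affine cover furnishes the required local bijections. We conclude that $\s$ and $\t$ give a bijection between subsets of $\s D(X) = X$ and localizing $D(X)$-submodules of $D(X)$ — which, for the self-action, are exactly the localizing $\otimes$-ideals of $D(X)$. There is no substantive obstacle to this proof beyond assembling the ingredients; the only point requiring any care is the identification of the local quotient $D(X)(U_i)$ with $D(R_i)$, which is precisely what Lemma \ref{lem_open_loc_a} delivers.
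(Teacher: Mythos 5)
Your proof is correct and takes essentially the same route as the paper: cover $X$ by finitely many affine opens, use Lemma \ref{lem_open_loc_a} to identify $D(X)(U_i)$ with $D(R_i)$, invoke Neeman's classification for each $D(R_i)$ (together with Thomason's identification $\Spc D^{\mathrm{perf}}(X)\iso X$), and glue with Theorem \ref{thm_bijection_local}. The paper's proof is merely a terser version of the same argument.
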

\begin{proof}
This follows from Thomason's result that $\Spc D^{\mathrm{perf}}(X) \iso X$ \cite{Thomclass} and Neeman's classification of the localizing subcategories of $D(R)$ \cite{NeeChro}. One simply uses the last lemma to apply Theorem \ref{thm_bijection_local}.
\end{proof}

\begin{rem}
It is not hard to see that this also gives a classification of $\otimes$-ideals generated by perfect complexes in terms of specialization closed subsets of $X$.
\end{rem}

\section{Relation to the machinery of Benson, Iyengar, and Krause}\label{sec_BIK}
We now give an indication of the sense in which our actions may be regarded as an enhancement of the actions introduced in \cite{BIK}. Let $\mathcal{T}$ be a tensor triangulated category acting on a compactly generated triangulated category $\mathcal{K}$. First let us note that it is always the case that an action in our sense gives rise to an action by a ring on the central ring of $\mathcal{K}$.

\begin{defn}\label{defn_central_ring}
Let $\mathcal{T}$ be a triangulated category. The \emph{graded centre} (or \emph{central ring}) of $\mathcal{T}$ is the graded abelian group
\begin{displaymath}
Z^*(\mathcal{T}) = \bigoplus_{n} Z^n(\mathcal{T}) = \bigoplus_{n} \{\a\colon \id_\mathcal{T} \to \S^n \; \vert \; \a\S = (-1)^n\S\a\}
\end{displaymath}
where $n$ ranges over the integers, which is given the structure of a graded commutative ring by composition of natural transformations.
\end{defn}

\begin{rem}
Using the words ring and group above is somewhat abusive as the centre of $\mathcal{T}$ may not form a set (we do not assume $\mathcal{T}$ essentially small). However, this is not a problem if one only wishes to consider the images of genuine rings.
\end{rem}

\begin{lem}\label{lem_bik_map}
An action $\mathcal{T}\times\mathcal{K} \stackrel{*}{\to} \mathcal{K}$ induces a morphism of rings
\begin{displaymath}
\End^*_\mathcal{T}(\mathbf{1}) \to Z^*(\mathcal{K}).
\end{displaymath}
\end{lem}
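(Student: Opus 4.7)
The plan is to construct the map explicitly and then verify the required algebraic properties. Given $\alpha \in \End^n_\mathcal{T}(\mathbf{1})$, i.e.\ a morphism $\alpha\colon \mathbf{1}\to \S^n\mathbf{1}$, define a natural transformation $\varphi(\alpha)\colon \id_\mathcal{K}\to \S^n$ whose component at $A\in \mathcal{K}$ is the composite
\begin{displaymath}
\varphi(\alpha)_A\colon \xymatrix{A \ar[r]^(0.4){l_A^{-1}} & \mathbf{1}*A \ar[r]^(0.4){\alpha*A} & \S^n\mathbf{1}*A \ar[r]^(0.55)\sim & \S^n(\mathbf{1}*A) \ar[r]^(0.55){\S^n l_A} & \S^n A,}
\end{displaymath}
where the unlabelled arrow is the isomorphism coming from exactness of $*$ in the first variable. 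Naturality in $A$ is automatic: $l$ is a natural isomorphism and $(-)*A$ as well as $X*(-)$ are functors, so both the unitors and the exactness structure transformations are natural in $A$.

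Next I would check the centrality condition, that $\varphi(\alpha)\in Z^n(\mathcal{K})$, i.e.\ $\varphi(\alpha)\S = (-1)^n \S \varphi(\alpha)$. The two natural transformations $\S A\to \S^{n+1}A$ obtained by evaluating $\varphi(\alpha)$ at $\S A$ or by applying $\S$ to $\varphi(\alpha)_A$ differ precisely by the sign $(-1)^n$, which is exactly the content of axiom (3) of Definition \ref{defn_action} applied with $r=n$ and $s=1$ (using naturality of $\alpha*(-)$ to reduce to the unit case treated by that axiom).

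Finally I would verify that $\varphi$ is a homomorphism of graded rings. That $\varphi(\id_\mathbf{1})$ is the identity natural transformation is immediate from $\id_\mathbf{1}*A = \id_{\mathbf{1}*A}$ and $l_A\circ l_A^{-1} = \id_A$. For multiplicativity, given $\alpha\colon \mathbf{1}\to \S^m\mathbf{1}$ and $\beta\colon \mathbf{1}\to \S^n\mathbf{1}$, the product in $\End^*_\mathcal{T}(\mathbf{1})$ is $\S^m\beta\circ \alpha\colon \mathbf{1}\to \S^{m+n}\mathbf{1}$. The identity $\varphi(\S^m\beta\circ\alpha) = \varphi(\beta)\circ \varphi(\alpha)$ reduces to a diagram chase: expand $\varphi(\beta)_{\S^m A}$ using biexactness to rewrite $\beta*\S^mA$ in terms of $\S^m(\beta*A)$, then use the pentagon-type compatibility between the exactness isomorphisms in the two variables together with naturality of $l$ to identify the composite with the one coming from $\S^m\beta\circ\alpha$. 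Graded commutativity of $Z^*(\mathcal{K})$ follows from graded commutativity of $\End^*_\mathcal{T}(\mathbf{1})$, which is automatic for the endomorphism ring of the unit in a symmetric monoidal triangulated category.

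The main obstacle is the bookkeeping of signs in the centrality check and in the multiplicativity check, but in both cases the key input is axiom (3) of Definition \ref{defn_action}, which was included in the definition of an action precisely to make these sign conventions work; the remaining compatibilities are formal consequences of the pentagon and unitor axioms.
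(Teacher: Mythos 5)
Your proof is correct and follows essentially the same approach as the paper's: define the map by the obvious composite through $\mathbf{1}*A$, then invoke the coherence conditions of Definition~\ref{defn_action} (in particular axiom (3)) to check naturality, centrality, and multiplicativity. The paper compresses these verifications into the phrase ``this is natural by our coherence conditions \dots\ it is straightforward that this is a map of graded commutative rings,'' whereas you spell out the checks, correctly pinpointing axiom (3) as the source of the sign in the centrality condition; this is a more explicit rendering of the same argument rather than a different one.
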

\begin{proof}
Given $f \in \Hom(\mathbf{1}, \S^i \mathbf{1})$ we send it to the natural transformation whose component at $A\in \mathcal{K}$ is 
\begin{displaymath}
\xymatrix{
A \ar[r]^(0.4){\sim} & \mathbf{1}*A \ar[r]^{f*1_A} & \S^i \mathbf{1}*A \ar[r]^(0.6){\sim} & \S^i A.
}
\end{displaymath}
This is natural by our coherence conditions. It is a standard fact, given our compatibility conditions, that the graded endomorphism ring of the unit is graded commutative (see for example \cite{suarez-alvarez}) from which it is straightforward that this is a map of graded commutative rings.
\end{proof}

Thus provided $\End^*_\mathcal{T}(\mathbf{1})$ is noetherian one is in a position to apply the machinery of Benson, Iyengar, and Krause. In fact this is discussed in Section 8 of \cite{BIK} for the case of tensor triangulated categories acting on themselves and it is shown in Section 9 that for the derived category of a noetherian ring one recovers the classical notion of supports from their construction. In fact, for a noetherian ring $R$, using the lemma above to move from an action by $D(R)$ to an action by $R$ does not change the support theory.

\begin{prop}\label{rem_BIK_reln}
Let $R$ be a noetherian ring and suppose
\begin{displaymath}
D(R) \times \mathcal{K} \stackrel{*}{\to} \mathcal{K}
\end{displaymath}
is an action of $D(R)$ on a compactly generated triangulated category $\mathcal{K}$. Then the support theory of Section \ref{sec_supports} agrees with the support theory given in \cite{BIK} via the morphism of Lemma \ref{lem_bik_map}.
\end{prop}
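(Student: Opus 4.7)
The plan is to match up, one at a time, the three layers of structure---idempotents on $D(R)$, their transfer to $\mathcal{K}$, and the point-wise versions---with the analogous constructions of \cite{BIK}. First I would recall the homeomorphism $\Spc D(R)^c \iso \Spec R$ coming from Thomason's classification, under which the Balmer support of a perfect complex coincides with its classical homological support. Since $R$ is noetherian, every specialization closed subset is Thomason and every point of $\Spec R$ is visible, so both our $\mathit{\Gamma}_x\mathbf{1}$ and BIK's pointwise idempotent are defined at every $\mathfrak{p}$; by Lemma \ref{lem_bik6.2_uber} it then suffices to identify the functors $\mathit{\Gamma}_\mathcal{V}\mathbf{1} * (-)$ and $L_\mathcal{V}\mathbf{1} * (-)$ on $\mathcal{K}$ with BIK's $\Gamma_\mathcal{V}$ and $L_\mathcal{V}$ for each specialization closed $\mathcal{V} \cie \Spec R$.

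For this I would use concrete models. Write $\mathcal{V} = \Un_\l V(I_\l)$ with each $I_\l = (r_1^\l, \ldots, r_{n_\l}^\l)$ a finitely generated ideal. In $D(R)$ there are standard isomorphisms
\begin{displaymath}
\mathit{\Gamma}_{V(I_\l)}\mathbf{1} \iso K_\infty(r_1^\l,\ldots,r_{n_\l}^\l), \qquad L_{V(I_\l)}\mathbf{1} \iso C_\infty(r_1^\l,\ldots,r_{n_\l}^\l),
\end{displaymath}
where $K_\infty$ is the stable Koszul complex and $C_\infty$ the associated \v{C}ech complex (a finite tensor product of two-term complexes $R \to R[r_i^{-1}]$). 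By Theorem \ref{thm_general_ltg}(iii) the global idempotent $\mathit{\Gamma}_\mathcal{V}\mathbf{1}$ is the homotopy colimit of the $\mathit{\Gamma}_{V(I_\l)}\mathbf{1}$, and applying homotopy colimits to the localization triangles yields the analogue for $L_\mathcal{V}\mathbf{1}$.

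Because the action $(-) * A$ is biexact and preserves coproducts, it commutes with both the finite tensor products and the homotopy colimits in the models above. The effect on $A$ of tensoring $\mathbf{1}$ with the endomorphism $R \stackrel{r}{\to} R$ is, via the unitor $l_A$, exactly the endomorphism of $A$ assigned to $r \in R$ by the ring map of Lemma \ref{lem_bik_map}. Hence $\mathit{\Gamma}_\mathcal{V}\mathbf{1} * A$ is canonically isomorphic to the same stable Koszul construction performed on $A$ using the central $R$-action, which is precisely BIK's $\Gamma_\mathcal{V} A$; the same reasoning handles $L_\mathcal{V}$. Combining this with Lemma \ref{lem_bik6.2_uber} identifies $\mathit{\Gamma}_x\mathbf{1}*(-)$ with BIK's $\Gamma_\mathfrak{p}$ for $x = \mathfrak{p}$, so the two supports of any $A \in \mathcal{K}$ agree. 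The main obstacle is really only the Koszul-model identification of $\mathit{\Gamma}_{V(I)}\mathbf{1}$ in $D(R)$ together with the bookkeeping that the action of $r \in R$ through the unitor matches its image in $Z^*(\mathcal{K})$; once these compatibilities are in hand the rest is a direct unwinding of definitions.
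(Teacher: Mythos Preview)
Your argument is correct, but it takes a more computational route than the paper. The paper does not identify the \emph{functors} $\mathit{\Gamma}_\mathcal{V}\mathbf{1}*(-)$ directly via explicit stable Koszul/\v{C}ech models; instead it identifies the underlying \emph{subcategories}. Concretely, the paper invokes \cite{BIK} Theorem~6.4 to see that BIK's $\mathcal{K}_\mathcal{V}$ is generated by the objects $K(\mathfrak{p})*a$ for $\mathfrak{p}\in\mathcal{V}$ and $a\in\mathcal{K}^c$, observes that $\{\Sigma^i K(\mathfrak{p})\;\vert\;\mathfrak{p}\in\mathcal{V}\}$ generates $\mathit{\Gamma}_\mathcal{V}D(R)$, and then applies Remark~\ref{rem_action_generation} (available because $D(R)$ is generated by $\mathbf{1}$) to conclude that $\mathit{\Gamma}_\mathcal{V}\mathcal{K} = \mathit{\Gamma}_\mathcal{V}D(R)*\mathcal{K}$ has exactly the same generating set. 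Once $\mathcal{K}_\mathcal{V} = \mathit{\Gamma}_\mathcal{V}\mathcal{K}$ as subcategories, the associated localization sequences, idempotent functors, and hence supports are forced to agree, with no further work. Both arguments ultimately rest on the same compatibility you isolate---that $r\in R$ acting through the unitor matches its image in $Z^*(\mathcal{K})$---which the paper leaves implicit in writing BIK's Koszul objects as $K(\mathfrak{p})*a$. The paper's approach is shorter and, notably, avoids any appeal to Theorem~\ref{thm_general_ltg}(iii), homotopy colimits, or the monoidal-model hypothesis; your approach has the advantage of making the functorial identification and the central-action compatibility fully explicit.
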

\begin{proof}
By \cite{BIK} Theorem 6.4 the subcategories giving rise to supports in the sense of Benson, Iyengar, and Krause are generated by certain Koszul objects: if $\mathcal{V}\cie \Spec R$ is specialization closed then their subcategory $\mathcal{K}_\mathcal{V}$ is easily seen to be generated by the objects
\begin{displaymath}
\{ K(\mathfrak{p})* a \; \vert \; a\in \mathcal{K}^c, \; \mathfrak{p}\in \mathcal{V}\}.
\end{displaymath}
As $\{\S^iK(\mathfrak{p}) \; \vert \; \mathfrak{p}\in \mathcal{V}, i\in \int\}$ is a generating set for $\mathit{\Gamma}_\mathcal{V}D(R)$ we see, by Remark \ref{rem_action_generation} and the corollary following it, that the localizing subcategories $\mathcal{K}_\mathcal{V}$ and $\mathit{\Gamma}_\mathcal{V}\mathcal{K}$ agree. Thus our support functors are precisely those of Benson, Iyengar, and Krause in the case that the derived category of a noetherian ring acts.
\end{proof}


  
  \bibliography{greg_bib}

\end{document}